\documentclass[11pt]{article}
\usepackage{amsmath, amssymb, amsthm, mathtools}
\usepackage{geometry}
\usepackage{hyperref}
\usepackage{natbib}
\usepackage{setspace}
\usepackage{enumitem}
\usepackage{array}
\usepackage{booktabs}
\usepackage{graphicx}
\usepackage{tikz}
\usetikzlibrary{positioning}
\usepackage{pgfplots}
\pgfplotsset{compat=1.17}
\usepackage{microtype}
\theoremstyle{plain}
\newtheorem{theorem}{Theorem}
\newtheorem{proposition}[theorem]{Proposition}
\newtheorem{lemma}[theorem]{Lemma}
\newtheorem{corollary}[theorem]{Corollary}

\theoremstyle{definition}
\newtheorem{definition}[theorem]{Definition}
\newtheorem{assumption}[theorem]{Assumption}
\newtheorem{example}[theorem]{Example}

\theoremstyle{remark}
\newtheorem{remark}[theorem]{Remark}
\newtheorem*{theorem*}{Theorem}

\newcommand{\Pbb}{\mathbb{P}}
\newcommand{\Ebb}{\mathbb{E}}
\newcommand{\Var}{\mathrm{Var}}

\newcommand{\ind}{\mathbf{1}}
\newcommand{\dx}{\,\mathrm{d}}
\newcommand{\Bern}{\mathrm{Bern}}
\newcommand{\BetaD}{\mathrm{Beta}}

\title{Martingale Posterior Predictive Coherence:\\
Hausdorff Moment Hierarchy}
\author{
  Nicholas~G.\ Polson\thanks{Booth School of Business, University of Chicago,
    Chicago, IL 60637. Email: \texttt{ngp@chicagobooth.edu}.}
  \and
  Daniel Zantedeschi\thanks{Muma College of Business, University of South Florida,
    Tampa, FL 33620. Email: \texttt{danielz@usf.edu}.}
}
\date{\today}

\begin{document}
\maketitle

\begin{abstract}
\noindent
For an exchangeable Bernoulli sequence with de~Finetti mixing measure~$\Pi$,
the $k$-step predictive probability
$\Pbb(X_{n+1}=\cdots=X_{n+k}=0\mid\mathcal{F}_n)$ equals the posterior
expectation $\Ebb[(1-\theta)^k\mid\mathcal{F}_n]$.  By binomial
expansion, this depends on all posterior moments up to order~$k$.  We show
that the first moment alone is not sufficient to uniquely identify these
quantities: for $k\ge 2$, the mapping from posterior mean to $k$-step
predictive is set-valued.  The martingale posterior framework of Fong, Holmes,
and Walker (which constrains only the first conditional moment of the terminal
value) does not, in general, uniquely identify multi-step predictive
distributions.  Under any strictly proper scoring rule, the plug-in predictive
is strictly dominated by the Bayes predictive whenever the posterior is
non-degenerate.  A closure theorem establishes that a martingale posterior
determines all $k$-step predictives if and only if the conditional law of the
terminal value is uniquely specified.  Hill's $A_{(n)}$ rule under the
Jeffreys $\BetaD(\tfrac12,\tfrac12)$ prior is a positive example.  The
discrepancy is $O(\Var(\theta\mid\mathcal{F}_n))$ and vanishes as the
posterior concentrates.  These results clarify the structural requirements for
predictive completeness under exchangeability.

\medskip
\noindent\textbf{Keywords:}
exchangeability,
de Finetti,
predictive inference,
martingales,
admissibility,
moment problems.
\end{abstract}

\newpage

\section{Introduction}\label{sec:intro}

\citet{kreps1988notes} posed a simple question: you observe a
sequence of coin flips that you believe to be exchangeable; what can
you say about the next~$k$ outcomes?
De~Finetti's~(\citeyear{definetti1937}) theorem answers in
principle: there exists a unique mixing measure~$\Pi$ on $[0,1]$
such that every predictive probability is an integral against the
posterior $\Pi(\cdot\mid\mathcal{F}_n)$, where $\mathcal{F}_n=\sigma(X_{1:n})$
is the natural filtration.  But predicting the next
flip requires only the posterior \emph{mean}, while predicting the
next \emph{run} of flips requires the full posterior shape.

For an exchangeable Bernoulli sequence $(X_i)_{i\ge 1}$, the
probability that the next~$k$ observations are all zero is the
posterior expectation $\Ebb[(1-\theta)^k\mid\mathcal{F}_n]$.  By the
binomial theorem, this depends on all posterior moments up to
order~$k$, not just the mean.  Each additional prediction step
demands one additional moment.  This is the \emph{moment hierarchy}.

To see why this matters concretely, consider exchangeable binary
failure indicators with posterior mean $\Ebb[\theta\mid\mathcal{F}_n]
=0.1$.  If the posterior is $\delta_{0.1}$, the probability that all
four components fail simultaneously is $0.1^4=10^{-4}$.  If the
posterior instead places equal mass on $0.01$ and $0.19$ (same mean),
the probability is $\Ebb[\theta^4]\approx 6.6\times 10^{-4}$---more
than six times larger.  The distinction is governed by the fourth
posterior moment, which the first moment does not determine.

Fong, Holmes, and Walker~(\citeyear{fong2023martingale}) proposed
replacing the full prior-times-likelihood machinery with a single
coherence condition: the sequence $(\theta_n)_{n\ge 0}$ is
\emph{coherent} if
\begin{equation}\label{eq:mp-condition}
\Ebb[\theta_n\mid\mathcal{F}_{n-1}]=\theta_{n-1}\quad\text{a.s.},
\qquad n\ge 1,
\end{equation}
This is a martingale condition; by the convergence theorem,
$\theta_n\to\theta_\infty$ a.s., and the conditional law of
$\theta_\infty$ given $\mathcal{F}_n$ plays the role of a posterior
in the sense that $\Ebb[\theta_\infty\mid\mathcal{F}_n]=\theta_n$.
The appeal is parsimony: a single moment constraint, and the rest
follows from convergence.

But condition~\eqref{eq:mp-condition} constrains only the
\emph{first conditional moment} of $\theta_\infty$ given
$\mathcal{F}_n$.  For one-step prediction this suffices:
$\Pbb(X_{n+1}=1\mid\mathcal{F}_n)=\theta_n$.  For two-step
prediction, $\Ebb[(1-\theta)^2\mid\mathcal{F}_n]
=(1-\theta_n)^2+\Var(\theta\mid\mathcal{F}_n)$ additionally
requires the posterior \emph{variance}, which the martingale
condition does not touch.  This paper characterises the full extent
of this gap.

The main results of the paper can be summarised informally as follows.

\begin{theorem*}[Informal summary]
The martingale condition identifies the posterior mean.  We characterize
what additional structure is required for predictive completeness.
Specifically:
\begin{enumerate}[label=(\roman*),nosep]
\item For $k\ge 2$, the mapping
  $m_n\mapsto\Ebb[(1-\theta)^k\mid\mathcal{F}_n]$ is set-valued:
  first-moment information does not uniquely identify the $k$-step
  predictive (Theorem~\ref{thm:insuff}).
\item Under any strictly proper scoring rule, the plug-in predictive
  $(1-\theta_n)^k$ is strictly dominated by the Bayes predictive
  whenever the posterior is non-degenerate
  (Proposition~\ref{prop:plugin-dominated}).
\item A martingale posterior is predictively complete if and only if the
  conditional law of $\theta_\infty$ is uniquely specified; equivalently,
  by Hausdorff's~(\citeyear{hausdorff1921summationsmethoden}) moment
  theorem, if and only if all conditional moments are determined
  (Theorem~\ref{thm:closure}).
\end{enumerate}
Hill's $A_{(n)}$ rule under the Jeffreys
$\BetaD(\tfrac12,\tfrac12)$ prior serves as a positive example:
it specifies the full Beta posterior and thereby achieves predictive
completeness at every horizon (Section~\ref{sec:hill}).
\end{theorem*}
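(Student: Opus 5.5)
I would prove the three parts of the informal summary separately, in each case working with the conditional law $\Pi_n:=\Pi(\cdot\mid\mathcal{F}_n)$ of $\theta_\infty$ on $[0,1]$ and its mean $m_n=\theta_n$. The basic identity is
\[
\Ebb[(1-\theta)^k\mid\mathcal{F}_n]=\sum_{j=0}^k\binom{k}{j}(-1)^j\,\Ebb[\theta^j\mid\mathcal{F}_n].
\]
It comes from conditional independence given $\theta$ together with de~Finetti's representation, and the binomial theorem.

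For (i) I would fix $k\ge 2$ and $m\in(0,1)$, then exhibit two laws on $[0,1]$ with mean $m$ but different values of $\int(1-\theta)^k\,d\Pi$. The natural pair is $\delta_m$ and the two-point law on $\{0,1\}$ with weights $1-m,m$. The first gives $(1-m)^k$ and the second gives $1-m$. Since $x\mapsto(1-x)^k$ is strictly convex for $k\ge2$, Jensen gives $(1-m)^k<1-m$. By convexity of the set of such laws, every value in $[(1-m)^k,\,1-m]$ is attained, and these are exactly the extreme values (Jensen below, the chord bound above). So the image of $m$ is a nondegenerate interval.

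To make this a statement about martingale posteriors, I would realise each law as the terminal law of an exchangeable sequence, namely Bernoulli mixtures with mixing measure $\Pi$ of mean $m$. In each case $\theta_n$ is a martingale with the same current value but a different $k$-step predictive.

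For (ii) I would use the definition of strict propriety directly. The event $A=\{X_{n+1}=\dots=X_{n+k}=0\}$ has true conditional probability $q=\Ebb[(1-\theta)^k\mid\mathcal{F}_n]$, so any report $p\neq q$ has strictly larger expected score. By strict Jensen, $q>(1-\theta_n)^k$ whenever $\Pi_n$ is not a point mass, so the plug-in report is strictly dominated. If the scoring rule is applied to the full distribution over $\{0,1\}^k$, I would use the plug-in product law instead. It differs from the exchangeable mixture at the all-zero outcome, so strict propriety again applies.

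For (iii) the forward direction is immediate from the binomial identity, since a uniquely specified $\Pi_n$ fixes every $\Ebb[(1-\theta)^k\mid\mathcal{F}_n]$. For the converse, knowing these for all $k$ gives, by triangular inversion of the binomial identity, all moments $\Ebb[\theta^j\mid\mathcal{F}_n]$. Since $[0,1]$ is compact, polynomials are dense in $C[0,1]$ by Weierstrass, so the moments determine $\Pi_n$; this is Hausdorff's uniqueness.

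The main obstacle is stating (i) and (iii) carefully at the level of the martingale framework rather than for single laws. One must check that two different conditional laws consistent with the same path $(\theta_j)_{j\le n}$ genuinely arise from admissible coherent sequences. I would handle this by constructing both explicitly as exchangeable Bernoulli mixtures and checking the martingale property via de~Finetti.
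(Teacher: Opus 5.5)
Your arguments for (i) and (iii) follow the paper. For (i) you use strict Jensen to compare $\delta_m$ with a two-point law of mean $m$; your endpoint law and the range $[(1-m)^k,1-m]$ are exactly Remark~\ref{rem:strength}. For (iii) you use the binomial expansion in one direction, and triangular (M\"obius) inversion plus Hausdorff/Weierstrass uniqueness in the other (Theorem~\ref{thm:closure}, Remark~\ref{rem:poly-density}). Two pieces need repair.

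\textbf{Realisation at $n\ge1$.} The realisation step you single out fails for your endpoint pair once $n\ge 1$. The likelihood $\theta^{S_n}(1-\theta)^{n-S_n}$ vanishes at $0$ when $S_n\ge1$ and at $1$ when $n-S_n\ge1$. Since $n\ge1$, at least one of these holds, so $(1-m)\delta_0+m\delta_1$ is never a posterior. Instead use a law $\nu$ supported in a compact subinterval of $(0,1)$, e.g.\ the paper's $\lambda\delta_a+(1-\lambda)\delta_b$ with $0<a<m<b<1$. Take the prior $\Pi(d\theta)\propto\theta^{-s}(1-\theta)^{-(n-s)}\,\nu(d\theta)$; its posterior at any data with $S_n=s$ is $\nu$. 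With this fix the value $1-m$ is approached but not attained.

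If you also want the whole past path $(\theta_j)_{j\le n}$ to agree, take two full-support priors with equal moments up to order $n+1$ but different moments of order $n+2$. For instance, take Lebesgue measure on $[0,1]$ and its perturbation by $\epsilon$ times the shifted Legendre polynomial of degree $n+2$. Then every posterior mean at times $\le n$ coincides, while the two-step run predictive at time $n$ differs. The paper does none of this; it simply treats $\nu_1,\nu_2$ as posteriors.

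\textbf{Hill/Jeffreys.} You omit this claim entirely. It is immediate from conjugacy. The posterior is $\BetaD(a_n,b_n)$ with $a_n=S_n+\tfrac12$ and $b_n=n-S_n+\tfrac12$. Each cylinder probability is $\Ebb[\theta^s(1-\theta)^{k-s}\mid\mathcal F_n]=B(a_n+s,b_n+k-s)/B(a_n,b_n)$, which for runs equals $(b_n)_k/(N_n)_k$ (Propositions~\ref{prop:hill-onestep}--\ref{prop:hill-kstep}, \eqref{eq:beta-moments}).

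\textbf{Part (ii).} Your argument applies strict propriety to the conditional law of the future block given $\mathcal F_n$ under the exchangeable model, with strict Jensen giving $q\neq(1-\theta_n)^k$. This is exactly the valid core of the paper's proof of Proposition~\ref{prop:plugin-dominated}. The paper then integrates over $\mathcal F_n$ under $\Pbb_\theta$ to assert $R(\delta^{\mathrm{Bayes}},\theta)<R(\delta^{\mathrm{plug}},\theta)$ for every $\theta$. Do not follow it there. Under $\Pbb_\theta$ the future block has law $\Bern(\theta)^{\otimes k}$, not the Bayes predictive, so strict propriety does not transfer. Concretely, take $n=0$, the uniform prior, $k=2$ and $\theta=\tfrac12$. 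The plug-in report (mass $\tfrac14$ on each outcome) is the true law and strictly beats the Bayes report $(\tfrac13,\tfrac16,\tfrac16,\tfrac13)$. Your conditional (and hence prior-averaged) reading of ``dominated'' is the one that actually holds.
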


The question ``how much of the posterior is needed for prediction?''
has been approached from several directions.
Goldstein's~(\citeyear{goldstein1975}) conditional-prevision programme
organises beliefs at the moment level: the analyst commits to selected
expectation functionals rather than a full posterior law.
Goldstein~(\citeyear{goldstein1975uniqueness}) showed that when the
posterior mean is linear in the data, a recursive relation uniquely
determines all prior moments given the likelihood structure; Goldstein
and Wooff~(\citeyear{goldstein1998wooff}) developed these ideas into
the Bayes linear programme.  Diaconis and
Ylvisaker~(\citeyear{diaconis1979}) characterised conjugate priors by
linearity of the posterior mean;
condition~\eqref{eq:mp-condition} is the sequential analogue.  The
present paper establishes that without this conjugate or likelihood
structure, first-moment constraints alone do not identify block-event
predictives (Theorem~\ref{thm:insuff}), and that under any strictly
proper scoring rule, the plug-in predictive is dominated by the Bayes
predictive whenever the posterior is non-degenerate
(Proposition~\ref{prop:plugin-dominated}).

The paper is organised as follows.
Section~\ref{sec:sanov-definetti} provides sample-path context;
Section~\ref{sec:setup} sets up the Bernoulli model.
Sections~\ref{sec:kreps}--\ref{sec:decision} establish the moment
hierarchy, insufficiency, and plug-in dominance.
Section~\ref{sec:hill} gives the Hill/$A_{(n)}$ positive example.
Sections~\ref{sec:asymp}--\ref{sec:hierarchy} present asymptotic
reconciliation and the closure theorem.
Section~\ref{sec:stopping} treats optimal stopping, and
Section~\ref{sec:discussion} concludes.

\section{Sanov, de~Finetti, and the sample-path picture}%
\label{sec:sanov-definetti}

This section outlines a sample-path viewpoint connecting KL rates,
exchangeability, and posterior concentration; it provides context for the
Bernoulli results and is not required for the main theorems.
The remainder of the paper makes the Bernoulli case concrete.

Given i.i.d.\ observations $X_1,\ldots,X_n$ with law $\mu_0$, the
empirical measure $L_n=n^{-1}\sum_{i=1}^n\delta_{X_i}$ satisfies a
large-deviation principle (LDP) with speed~$n$ and rate function
\begin{equation}\label{eq:sanov-rate}
I(\mu)=D_{\mathrm{KL}}(\mu\|\mu_0)
=\int\log\frac{d\mu}{d\mu_0}\dx\mu
\qquad(\mu\ll\mu_0),
\end{equation}
with $I(\mu)=+\infty$ when $\mu\not\ll\mu_0$.
The probability that $L_n$ lies in a measurable set $A$ decays as
$\Pbb(L_n\in A)\asymp
\exp\bigl(-n\inf_{\mu\in A}D_{\mathrm{KL}}(\mu\|\mu_0)\bigr)$
\citep{sanov1957}.  The KL divergence is the canonical cost of
deviation from the truth.

For an exchangeable sequence $(X_i)_{i\ge 1}$, de~Finetti's theorem
(Theorem~\ref{thm:definetti} below) provides a unique prior $\Pi$ on
the space of data-generating laws such that, conditionally on
$\mu\sim\Pi$, the observations are i.i.d.\ with law $\mu$.
By the strong law, $L_n\to\mu$ a.s., so the law of the limit is
$\mathcal{L}(L_\infty)=\Pi$.  Each infinite sample path reveals
exactly one draw from $\Pi$; the prior itself is not observable from a
single path.

There is a duality between the forward and backward directions of the
rate function.  In the forward direction (Sanov), the cost of observing
$L_n\approx\nu$ when the truth is~$\mu_0$ is
$\approx\exp(-n\,D_{\mathrm{KL}}(\nu\|\mu_0))$.  In the inverse
direction, fixing the data~$L_n$, posterior mass near~$\mu$ concentrates
at rate $\exp\{-n\,D_{\mathrm{KL}}(L_n\|\mu)\}$.
The KL arguments are reversed.  The posterior $\Pi_n$ is the
conditional law of $\mu$ given $L_n$, interpolating
$\Pi_0=\Pi\;\longrightarrow\;\Pi_\infty=\delta_\mu$.  At moderate
deviations (scale $a_n/\sqrt{n}$ with $a_n\to\infty$,
$a_n=o(\sqrt{n})$), the Sanov rate is approximated by the quadratic
\begin{equation}\label{eq:mdp-rate}
D_{\mathrm{KL}}(\mu\|\mu_0)
\approx\tfrac{1}{2}(\theta-\theta_0)^T\mathcal{I}(\theta_0)
(\theta-\theta_0),
\end{equation}
where $\mathcal{I}(\theta_0)$ is the Fisher information matrix.
Eichelsbacher and Ganesh~(\citeyear{eichelsbacher2002moderate}) show the
posterior satisfies a moderate-deviation principle with this
quadratic rate, sharpening the Bernstein--von~Mises theorem to
intermediate scales.

Table~\ref{tab:unified} collects four sample-path descriptions of
the information about the directing measure~$\mu$.

\begin{table}[htbp]
\centering
\caption{Four descriptions of sample-path information.}%
\label{tab:unified}
\begin{tabular}{@{}llll@{}}
\toprule
Framework & Primary object & Path convergence & Rate \\
\midrule
De~Finetti & Directing measure $\mu\sim\Pi$
& $L_n\to\mu$ a.s.\ & Strong law \\
Sanov/LDP & Empirical measure $L_n$
& $\Pbb(L_n\in A)\asymp e^{-nI}$ & $D_{\mathrm{KL}}(\cdot\|\mu_0)$ \\
E-process & Test martingale $E_n$
& $n^{-1}\log E_n\to D_{\mathrm{KL}}$ & Linear in $n$ \\
Mart.\ posterior & Measure-valued mg.\ $\Pi_n$
& $\Pi_n\to\delta_\mu$ a.s.\ & Posterior contraction \\
\bottomrule
\end{tabular}
\end{table}

The martingale posterior of Fong, Holmes, and
Walker~(\citeyear{fong2023martingale}) sits in the bottom row: it
constrains the first moment of $\Pi_n$ but not the law.  The
insufficiency results of Section~\ref{sec:insuff} quantify this gap;
the closure theorem (Theorem~\ref{thm:closure}) identifies the exact
boundary.

The following theorem makes the Sanov--de~Finetti duality concrete.

\begin{theorem}[Sanov--Moment Bridge]\label{thm:sanov-bridge}
Let $(X_i)_{i\ge 1}$ be exchangeable Bernoulli with prior~$\Pi_0$ and
empirical mean $L_n=S_n/n$.
{\hbadness=2000
\begin{enumerate}[label=(\roman*),nosep]
\item \emph{Posterior as Sanov exponential.}
The Bernoulli likelihood satisfies
$\theta^{S_n}(1-\theta)^{n-S_n}
=\exp\{-n\,D_{\mathrm{KL}}(L_n\|\theta)+nH(L_n)\}$,
where $H(L_n)$ is the binary entropy at~$L_n$.
Absorbing the $\mathcal{F}_n$-measurable factor $\exp\{nH(L_n)\}$
into the normalising constant, the posterior is
\begin{equation}\label{eq:sanov-posterior}
\Pi_n(d\theta)\;\propto\;
\exp\!\bigl\{-n\,D_{\mathrm{KL}}(L_n\|\theta)\bigr\}\,\Pi_0(d\theta).
\end{equation}
The posterior is shaped entirely by the Sanov rate function
$\theta\mapsto D_{\mathrm{KL}}(L_n\|\theta)$.

\item \emph{KL curvature expansion.}
Write $\delta=\theta-L_n$.  The Taylor expansion of the Bernoulli KL
around its minimiser $\theta=L_n$ gives
\begin{equation}\label{eq:kl-taylor}
D_{\mathrm{KL}}(L_n\|\theta)
=\frac{\delta^2}{2L_n(1-L_n)}+O(\delta^3).
\end{equation}
The quadratic coefficient is the Fisher information
$\mathcal{I}(L_n)=1/\bigl(L_n(1-L_n)\bigr)$ evaluated at the empirical mean.

\item \emph{Location versus curvature.}
The KL representation shows that the posterior~\eqref{eq:sanov-posterior}
is centred near the empirical mean $L_n$ in likelihood geometry.
$L_n$ identifies the likelihood/KL centre, while the posterior mean
$m_n=\Ebb[\theta\mid\mathcal{F}_n]$ is the posterior first moment;
the present paper concerns the latter.  Knowledge of~$m_n$
pins the approximate location of the posterior but specifies nothing
about its spread or shape: first-moment information discards the
curvature encoded in $\nabla^2 D_{\mathrm{KL}}$ and all higher
derivatives.

\item \emph{Multi-step predictives require curvature.}
By Corollary~\ref{cor:moment-hierarchy},
$\Ebb[(1-\theta)^k\mid\mathcal{F}_n]$ depends on posterior moments up
to order~$k$.  For $k=2$, the discrepancy between Bayes and plug-in
equals the posterior variance~$\sigma_n^2$ (equation~\eqref{eq:k2-identity}),
which is governed by the quadratic term in~\eqref{eq:kl-taylor}.
For $k\ge 3$, additional cumulants enter, corresponding to higher
derivatives of the Sanov rate.

\item \emph{Derivative correspondence.}
Higher-order predictives are sensitive to higher-order features of the
KL profile; in the Bernoulli case this is made explicit via the
polynomial expansion $(1-\theta)^k=\sum_j\binom{k}{j}(-\theta)^j$
(Theorem~\ref{thm:kreps}).  The moment hierarchy follows from this
expansion, not from special structure of the Bernoulli model.
\end{enumerate}}
\end{theorem}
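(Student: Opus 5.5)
The plan is to verify the five parts in order. Parts (i) and (ii) carry the only real computation; parts (iii)–(v) follow from these together with results stated later in the paper.

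For (i), write $S_n=nL_n$ and take logarithms of the likelihood:
\[
S_n\log\theta+(n-S_n)\log(1-\theta)=n\bigl[L_n\log\theta+(1-L_n)\log(1-\theta)\bigr].
\]
Adding and subtracting $n[L_n\log L_n+(1-L_n)\log(1-L_n)]=-nH(L_n)$ gives
\[
n\Bigl[-L_n\log\frac{L_n}{\theta}-(1-L_n)\log\frac{1-L_n}{1-\theta}\Bigr]+nH(L_n)
=-nD_{\mathrm{KL}}(L_n\|\theta)+nH(L_n).
\]
At the boundary values $L_n\in\{0,1\}$ we use the convention $0\log 0=0$. Bayes' rule then gives $\Pi_n(d\theta)\propto\theta^{S_n}(1-\theta)^{n-S_n}\Pi_0(d\theta)$. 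Since $e^{nH(L_n)}$ is $\mathcal{F}_n$-measurable and does not depend on $\theta$, it cancels in the normalisation, which yields \eqref{eq:sanov-posterior}.

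For (ii), assume $0<L_n<1$, since otherwise the Fisher information is infinite and the expansion is vacuous. Let $f(\theta)=D_{\mathrm{KL}}(L_n\|\theta)$. Then $f(L_n)=0$ and
\[
f'(\theta)=-\frac{L_n}{\theta}+\frac{1-L_n}{1-\theta},
\]
which vanishes at $\theta=L_n$. The second derivative is
\[
f''(\theta)=\frac{L_n}{\theta^2}+\frac{1-L_n}{(1-\theta)^2},
\]
and at $\theta=L_n$ this equals $1/L_n+1/(1-L_n)=1/(L_n(1-L_n))$. Because $f'''$ is bounded on a neighbourhood of $L_n$ inside $(0,1)$, Taylor's theorem with remainder gives \eqref{eq:kl-taylor}. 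The $O(\delta^3)$ constant is uniform on compact subsets of $(0,1)$. I will state this locality explicitly, since the expansion is not uniform as $L_n\to\{0,1\}$; this is the one place needing care.

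Part (iii) is interpretive; the substantive point is non-identification. Take two priors whose posteriors share the mean $m_n$ but have different variances, for instance by rescaling the spread of a Beta family at fixed mean. This exhibits that $m_n$ carries no information about curvature. It is the content that Theorem~\ref{thm:insuff} later formalises, and here I would cite that result rather than reprove it.

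For (iv), expand $(1-\theta)^2=1-2\theta+\theta^2$ and take the posterior expectation:
\[
\Ebb[(1-\theta)^2\mid\mathcal{F}_n]=1-2m_n+m_n^2+\sigma_n^2=(1-m_n)^2+\sigma_n^2.
\]
This is \eqref{eq:k2-identity}. To link $\sigma_n^2$ to the quadratic term, apply a Laplace approximation to \eqref{eq:sanov-posterior}: the posterior is approximately Gaussian with variance $L_n(1-L_n)/n$. For general $k$, invoke Corollary~\ref{cor:moment-hierarchy}, and observe that the cubic and higher Taylor terms of $f$ generate the higher cumulants. Part (v) is the binomial theorem together with Theorem~\ref{thm:kreps}.

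The main obstacle is making the statements in (iii)–(iv) about curvature governing the variance rigorous, as opposed to heuristic. I would keep them at the level of the Laplace asymptotic $\sigma_n^2=L_n(1-L_n)/n+O(n^{-2})$. This holds under a continuous positive prior density at an interior $L_n$, and I would otherwise rely on the exact identities.
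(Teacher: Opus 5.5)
Your route is the paper's: the add-and-subtract computation for (i), differentiation plus Taylor's theorem for (ii), and (iii)--(v) read off from Theorem~\ref{thm:kreps} and Corollary~\ref{cor:moment-hierarchy}. Citing Theorem~\ref{thm:insuff} for (iii) is a sensible sharpening. Your restriction to $0<L_n<1$ in (ii) is correct, and so is your remark that the $O(\delta^3)$ constant is uniform only for $L_n,\theta$ in compact subsets of $(0,1)$; the paper omits both.

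There is one concrete error in (i), and the statement and the paper's proof contain the same slip. The quantity you add and subtract is $X=n[L_n\log L_n+(1-L_n)\log(1-L_n)]=-nH(L_n)$. The bracket you display, $-nD_{\mathrm{KL}}(L_n\|\theta)$, is the log-likelihood \emph{minus} $X$, so the leftover term is $+X=-nH(L_n)$, not $+nH(L_n)$. The correct identity is the standard type-class formula
\[
\theta^{S_n}(1-\theta)^{n-S_n}=\exp\{-n\,D_{\mathrm{KL}}(L_n\|\theta)-nH(L_n)\}.
\]
The check $n=2$, $S_2=1$, $\theta=\tfrac12$ confirms this: the likelihood is $\tfrac14=e^{-2\log 2}$, while the stated form gives $e^{2\log 2}=4$. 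The paper's proof likewise writes \emph{plus the $\mathcal{F}_n$-measurable term $nH(L_n)$}. Because $e^{\pm nH(L_n)}$ does not depend on $\theta$, the posterior representation \eqref{eq:sanov-posterior} and everything after it survive unchanged. Still, you should prove the corrected identity rather than certify the stated one.

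There is also a smaller overstatement in your (iv). With only a continuous positive prior density near the limit of $L_n$, Laplace's method gives $\sigma_n^2=\frac{L_n(1-L_n)}{n}(1+o(1))$. An $O(n^{-2})$ remainder needs more smoothness, for example a $C^2$ density. The paper treats the curvature claims in (iii)--(iv) as interpretive, so this does not affect the proof, but continuity alone does not justify the rate as you wrote it.
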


\begin{proof}
(i) Since $\theta^{S_n}(1-\theta)^{n-S_n}
=\exp\{S_n\log\theta+(n-S_n)\log(1-\theta)\}$, adding and subtracting
$S_n\log L_n+(n-S_n)\log(1-L_n)$ gives the exponent
$-n[L_n\log(L_n/\theta)+(1-L_n)\log((1-L_n)/(1-\theta))]
=-n\,D_{\mathrm{KL}}(L_n\|\theta)$,
plus the $\mathcal{F}_n$-measurable term $nH(L_n)$.

(ii) Differentiating
$f(\theta)=D_{\mathrm{KL}}(L_n\|\theta)=L_n\log(L_n/\theta)+(1-L_n)\log((1-L_n)/(1-\theta))$:
$f'(\theta)=-L_n/\theta+(1-L_n)/(1-\theta)$, which vanishes at $\theta=L_n$.
The second derivative is $f''(\theta)=L_n/\theta^2+(1-L_n)/(1-\theta)^2$;
at $\theta=L_n$ this equals $\mathcal{I}(L_n)$.
Taylor's theorem gives~\eqref{eq:kl-taylor}.

(iii)--(v) follow from (i)--(ii) together with
Theorem~\ref{thm:kreps} and Corollary~\ref{cor:moment-hierarchy}.
\end{proof}

\section{Setup and notation}\label{sec:setup}

\subsection{Exchangeable Bernoulli sequences}

Let $(X_i)_{i\ge 1}$ take values in $\{0,1\}$.  Write
$X_{1:n}=(X_1,\ldots,X_n)$ and $S_n=\sum_{i=1}^n X_i$.

\begin{definition}\label{def:exch}
The sequence $(X_i)_{i\ge 1}$ is \emph{exchangeable} if for every
$n\ge 1$ and every permutation $\sigma$ of $\{1,\ldots,n\}$,
$(X_1,\ldots,X_n)\stackrel{d}{=}(X_{\sigma(1)},\ldots,X_{\sigma(n)})$.
\end{definition}

\begin{theorem}[de Finetti]\label{thm:definetti}
Let $(X_i)_{i\ge 1}$ be an exchangeable $\{0,1\}$-valued sequence.
There exists a unique probability measure $\Pi$ on $[0,1]$ such that
for every $n\ge 1$,
\[
\Pbb(X_1=x_1,\ldots,X_n=x_n)
=\int_0^1\theta^{s_n}(1-\theta)^{n-s_n}\dx\Pi(\theta),
\qquad s_n=\textstyle\sum_{i=1}^n x_i.
\]
Conditionally on $\theta$, the $X_i$ are i.i.d.\ $\Bern(\theta)$.
\end{theorem}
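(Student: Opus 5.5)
The plan is the classical Hausdorff-moment proof of de~Finetti's theorem for $\{0,1\}$-valued sequences. Throughout, $p_n(s)$ denotes the probability of any fixed binary string of length $n$ with $s$ ones. By exchangeability this depends only on $(n,s)$, and consistency gives the recursion
\[
p_n(s)=p_{n+1}(s)+p_{n+1}(s+1).
\]
Set $c_m=p_m(m)=\Pbb(X_1=\cdots=X_m=1)$, with $c_0=1$. The goal is to exhibit a probability measure $\Pi$ with $c_m=\int\theta^m\,d\Pi$ for all $m$, and then to recover every $p_n(s)$ from these moments.

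\textbf{Step 1 (finite differences).} Iterating the recursion downward from $p_n(n)=c_n$ by induction gives
\[
p_n(s)=(-1)^{n-s}\Delta^{n-s}c_s=\sum_{j=0}^{n-s}\binom{n-s}{j}(-1)^j c_{s+j},
\]
where $\Delta$ is the forward difference. Since each $p_n(s)$ is a probability, it is nonnegative. Hence $(c_m)$ is completely monotone: $(-1)^r\Delta^r c_m\ge 0$ for all $r,m$.

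\textbf{Step 2 (existence via Hausdorff).} This is the only substantive step. I would invoke Hausdorff's moment theorem, which the paper already cites: a completely monotone sequence with $c_0=1$ is the moment sequence of a unique probability measure on $[0,1]$.

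If a self-contained argument is wanted, I would construct $\Pi$ directly. Let $\Pi_n$ be the law of $S_n/n$, which puts mass $\binom{n}{s}p_n(s)$ at $s/n$. Compute the factorial moments exactly: $\Ebb[S_n(S_n-1)\cdots(S_n-m+1)]=n(n-1)\cdots(n-m+1)\,c_m$, because a falling factorial counts ordered $m$-tuples of distinct indices that are all ones. Consequently $\int\theta^m\,d\Pi_n\to c_m$ as $n\to\infty$. By compactness of $[0,1]$ (Helly/Prokhorov) take a weakly convergent subsequence $\Pi_{n_k}\Rightarrow\Pi$. Since $\theta^m$ is bounded and continuous, $\int\theta^m\,d\Pi=c_m$.

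\textbf{Step 3 (representation and uniqueness).} Expanding $(1-\theta)^{n-s}$ binomially gives
\[
\int_0^1\theta^{s}(1-\theta)^{n-s}\,d\Pi=\sum_j\binom{n-s}{j}(-1)^jc_{s+j}=p_n(s),
\]
which is the displayed formula. Uniqueness follows because any representing $\Pi'$ has $\int\theta^m\,d\Pi'=c_m$ (take $s=n=m$). Polynomials are dense in $C[0,1]$ by Weierstrass, so $\Pi'=\Pi$.

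\textbf{Step 4 (conditional i.i.d.).} Let $\theta\sim\Pi$ and, given $\theta$, draw i.i.d.\ $\Bern(\theta)$ variables. These have exactly the finite-dimensional laws of Step 3, so $(X_i)$ has this law, and the conditional statement holds on a suitable (enlarged) probability space.

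The main obstacle is Step 2, namely existence of the mixing measure. The factorial-moment identity and the compactness limit are where the content lies; everything else is finite combinatorics plus Weierstrass.
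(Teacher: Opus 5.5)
Your proof is correct, and it takes a genuinely different route from the paper's.

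The paper argues on path space. It defines $\theta_\infty=\lim S_n/n$ by appeal to a strong law, uses tail triviality under ergodic components and the ergodic decomposition to write $\Pbb$ as a mixture of i.i.d.\ laws, and defers the substance to de~Finetti and Hewitt--Savage. Only the uniqueness step (moments determine $\Pi$) is shared with yours.

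You work purely with finite-dimensional laws. The consistency recursion identifies $(-1)^r\Delta^r c_m$ with the cylinder probability $p_{m+r}(m)$, so complete monotonicity is just nonnegativity of probabilities. Existence then follows from Theorem~\ref{thm:hausdorff}, or from Helly selection applied to the laws of $S_n/n$. In the Helly version, the passage from $\Ebb[S_n^{(m)}]=n^{(m)}c_m$ to $\int\theta^m\dx\Pi_n\to c_m$ rests on the bound $|(S_n/n)^m-S_n^{(m)}/n^m|\le C_m/n$, which you should write out.

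Your route buys an elementary, self-contained proof that fits the paper's own machinery. Step~1 is the binomial expansion of Corollary~\ref{cor:moment-hierarchy} read at the level of cylinder probabilities. Your factorial-moment identity also shows that the paper's $\lim_n\Ebb[S_n^{(k)}/n^{(k)}]$ is actually constant for $n\ge k$. Finally, you do not presuppose a.s.\ convergence of $S_n/n$, which for a sequence not yet known to be a mixture of i.i.d.\ laws needs a reverse-martingale argument.

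The paper's route buys a tail-measurable directing variable on the original probability space. That is how $\theta$ is used later (Theorem~\ref{thm:kreps}, Remark~\ref{rem:bayes-special}), and the approach extends beyond binary sequences.

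Your Step~4 realises the conditional i.i.d.\ structure only on an auxiliary space, but the paper's form follows at no cost. Let $\vartheta\sim\Pi$ and let $(Y_i)$ be conditionally i.i.d.\ $\Bern(\vartheta)$. The conditional strong law gives $\vartheta=\limsup_n n^{-1}\sum_{i\le n}Y_i$ a.s. Since $(X_i)\stackrel{d}{=}(Y_i)$ on $\{0,1\}^{\infty}$, the variable $\theta:=\limsup_n S_n/n$ satisfies $(\theta,X_1,X_2,\ldots)\stackrel{d}{=}(\vartheta,Y_1,Y_2,\ldots)$. As a byproduct, this proves the a.s.\ convergence of $S_n/n$ that the paper's sketch takes for granted.
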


\begin{proof}
The tail $\sigma$-algebra $\mathcal{T}=\bigcap_n\sigma(X_{n+1},X_{n+2},\ldots)$
(which, for an exchangeable sequence, has the same completion as the
exchangeable $\sigma$-algebra) is a.s.\ trivial under any ergodic component.
By the strong law,
$S_n/n\to\theta_\infty$ a.s., and $\theta_\infty$ is $\mathcal{T}$-measurable.
The ergodic decomposition expresses $\Pbb$ as a mixture
$\Pbb(\cdot)=\int\Pbb_\theta(\cdot)\dx\Pi(\theta)$ where each
$\Pbb_\theta$ generates i.i.d.\ $\Bern(\theta)$.  Uniqueness of $\Pi$
follows from the fact that the moment sequence
$\int_0^1\theta^k\dx\Pi(\theta)=\lim_{n\to\infty}\Ebb[S_n^{(k)}/n^{(k)}]$
is determined by the joint law of the $X_i$.
See de~Finetti~(\citeyear{definetti1937}) and
Hewitt--Savage~(\citeyear{hewitt1955symmetric}).
\end{proof}

The pair $(n,S_n)$ is sufficient for the likelihood
$\theta^{S_n}(1-\theta)^{n-S_n}$, so the posterior is
\begin{equation}\label{eq:posterior-general}
\Pi(A\mid\mathcal{F}_n)
=\frac{\int_A\theta^{S_n}(1-\theta)^{n-S_n}\dx\Pi(\theta)}
{\int_0^1\theta^{S_n}(1-\theta)^{n-S_n}\dx\Pi(\theta)},
\qquad A\subseteq[0,1]\text{ measurable}.
\end{equation}

\begin{remark}[Sufficiency]\label{rem:sufficiency}
The pair $(n,S_n)$ is sufficient for $\theta$ in the Bernoulli likelihood.
Every posterior quantity (moments, predictive probabilities, scoring-rule
risks) reduces to an integral $\int f(\theta)\dx\Pi(\theta\mid\mathcal{F}_n)$
for appropriate~$f$.  The posterior~\eqref{eq:posterior-general} depends on
the data only through $S_n$.
\end{remark}

\subsection{Martingale posteriors and the terminal value}

\begin{assumption}\label{ass:bounded}
$(\theta_n)_{n\ge 0}$ is a sequence of $[0,1]$-valued,
$\mathcal{F}_n$-measurable random variables satisfying the martingale
condition~\eqref{eq:mp-condition}.
\end{assumption}

\begin{proposition}[Terminal value]\label{prop:terminal}
Under Assumption~\ref{ass:bounded}, $\theta_n\to\theta_\infty$ a.s.\ and
in $L^1$, and $\Ebb[\theta_\infty\mid\mathcal{F}_n]=\theta_n$ a.s.\ for
all~$n$.  The random variable $\theta_\infty$ is the \emph{directing
parameter}.
\end{proposition}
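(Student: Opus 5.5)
The plan is to treat $(\theta_n)$ as a bounded martingale and read off all three claims from the standard martingale convergence theorems. The order is: almost sure convergence, then $L^1$ convergence, then identification of the conditional mean of the limit.

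\emph{Step 1: almost sure convergence.} By Assumption~\ref{ass:bounded}, each $\theta_n$ lies in $[0,1]$, so $\sup_n\Ebb|\theta_n|\le 1$. Doob's martingale convergence theorem therefore applies. Concretely, the upcrossing inequality bounds the expected number of upcrossings of any rational interval $[a,b]$ by $1/(b-a)$. Hence almost surely there are only finitely many upcrossings of every rational interval, and $\theta_n$ converges to a limit $\theta_\infty$. Since $[0,1]$ is closed, $\theta_\infty\in[0,1]$ a.s. We define $\theta_\infty:=\limsup_n\theta_n$ everywhere, which makes it $\mathcal{F}_\infty$-measurable.

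\emph{Step 2: $L^1$ convergence.} Uniform boundedness gives uniform integrability, and indeed domination by the constant $1$. The bounded convergence theorem therefore upgrades almost sure convergence to $\Ebb|\theta_n-\theta_\infty|\to0$. This is the step where boundedness matters: a general $L^1$-bounded martingale need not converge in $L^1$, and the argument rests only on Assumption~\ref{ass:bounded} at this point.

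\emph{Step 3: closure.} Iterating the martingale condition with the tower property gives $\Ebb[\theta_m\mid\mathcal{F}_n]=\theta_n$ for all $m\ge n$. Fix $n$ and let $m\to\infty$. Conditional expectation is an $L^1$-contraction, so
\[
\Ebb\bigl|\Ebb[\theta_m\mid\mathcal{F}_n]-\Ebb[\theta_\infty\mid\mathcal{F}_n]\bigr|\le\Ebb|\theta_m-\theta_\infty|\to0 .
\]
The left-hand side is constantly $\theta_n$ in $m$, so $\theta_n=\Ebb[\theta_\infty\mid\mathcal{F}_n]$ a.s. No step is a real obstacle; the only care needed is applying the $L^1$ limit to the conditional expectations, rather than interchanging an a.s.\ limit with conditioning without justification.
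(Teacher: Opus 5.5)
Your proposal is correct and follows the same route as the paper: Doob's convergence theorem for the bounded martingale, boundedness to upgrade to $L^1$ convergence, and passing to the limit $m\to\infty$ in $\Ebb[\theta_m\mid\mathcal{F}_n]=\theta_n$. Your $L^1$-contraction argument simply spells out the limit passage that the paper delegates to Williams, Theorem~11.5.
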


\begin{proof}
Since $(\theta_n)$ is bounded in $[0,1]$, Doob's forward convergence
theorem gives a.s.\ convergence.  Uniform boundedness gives $L^1$
convergence.  The conditional expectation identity follows by passing to
the limit in $\Ebb[\theta_m\mid\mathcal{F}_n]=\theta_n$ ($m>n$);
see Williams~(\citeyear{williams1991probability}), Theorem~11.5.
\end{proof}

\begin{remark}\label{rem:bayes-special}
In the Bayesian framework with $\theta\sim\Pi$ and
$X_i\mid\theta\sim\Bern(\theta)$, setting
$\theta_n=\Ebb[\theta\mid\mathcal{F}_n]$ yields a martingale with
$\theta_\infty=\theta$ a.s.\ (Doob's consistency theorem).  The
martingale condition is necessary for Bayesian coherence; the question is
whether it is sufficient.  Diaconis and Ylvisaker~(\citeyear{diaconis1979})
showed that conjugate priors are characterized by linearity of the
posterior mean.  Condition~\eqref{eq:mp-condition} is the sequential
analogue.
\end{remark}

\begin{definition}[Predictive coherence]\label{def:pred-coherence}
A predictive system is \emph{$k$-step predictively coherent} if, for
every cylinder event of length~$k$, the probability
$\Pbb(X_{n+1:n+k}=x_{1:k}\mid\mathcal{F}_n)$ is correctly specified.
It is \emph{fully predictively coherent} if this holds for all finite~$k$.
\end{definition}

\subsection{Martingale versus exchangeability}\label{sec:mart-vs-exch}

\begin{proposition}[Martingale does not imply exchangeability]%
\label{prop:mart-not-exch}
There exist $[0,1]$-valued martingales
$(\theta_n,\mathcal{F}_n)$ that do not arise as posterior means
$\Ebb[\theta\mid\mathcal{F}_n]$ under any exchangeable Bernoulli law.
\end{proposition}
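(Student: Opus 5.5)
We exhibit an explicit $[0,1]$-valued martingale on the canonical Bernoulli path space whose increments fail a structural property that every Bayesian posterior-mean process must have. The key property is sufficiency. By Remark~\ref{rem:sufficiency} and \eqref{eq:posterior-general}, under any exchangeable Bernoulli law the posterior depends on $X_{1:n}$ only through $S_n$. Hence $\Ebb[\theta\mid\mathcal{F}_n]$ is a function $g_n(S_n)$ and is symmetric in the data. It therefore suffices to build a martingale for which $\theta_2$ takes different values on the paths $(1,0)$ and $(0,1)$.

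\textbf{Construction.} We make the one-step predictive consistent with the martingale by requiring $\Pbb(X_{n+1}=1\mid\mathcal{F}_n)=\theta_n$. Set $\theta_0=1/2$. Define the update by
\[
\theta_{n}=\theta_{n-1}+\alpha_n(X_n-\theta_{n-1}),
\]
with an $\mathcal{F}_{n-1}$-measurable gain $\alpha_n\in[0,1]$. Since $\Ebb[X_n-\theta_{n-1}\mid\mathcal{F}_{n-1}]=0$, the martingale condition \eqref{eq:mp-condition} holds, and $\theta_n\in[0,1]$ because $\theta_n$ is a convex combination of $\theta_{n-1}$ and $X_n$. Choose $\alpha_1=1/2$, and let $\alpha_2$ depend on $X_1$, say $\alpha_2=1/2$ if $X_1=1$ and $\alpha_2=1/4$ if $X_1=0$.

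\textbf{Checking the paths.} After $X_1=1$ we have $\theta_1=3/4$, and then $X_2=0$ gives $\theta_2=3/8$. After $X_1=0$ we have $\theta_1=1/4$, and then $X_2=1$ gives $\theta_2=1/4+\tfrac14\cdot\tfrac34=7/16$. Both paths have positive probability under the law defined by these predictives. So $\theta_2$ is not a function of $S_2$, and no exchangeable $\Pi$ can produce this process as posterior means.

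\textbf{The one subtle point.} The statement says ``under any exchangeable Bernoulli law''. The filtration is the data filtration $\mathcal{F}_n=\sigma(X_{1:n})$, but the law of $X$ in the competing Bayesian model might differ from the one we construct. This does not matter. Any exchangeable law gives a posterior mean that is symmetric in $(X_1,X_2)$ on every path with positive mass. Our $\theta_2$ is a fixed function of the path and is asymmetric on the positive-mass paths $(1,0)$ and $(0,1)$, so it cannot equal any such symmetric function there. The only case needing care is an exchangeable law that puts zero mass on these paths, for example $\Pi=\delta_0$ or $\delta_1$. There conditional expectations are defined only almost surely, and we should interpret the statement as equality almost surely under our constructed law. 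Under that reading, an exchangeable law charging $(1,0)$ also charges $(0,1)$ equally by exchangeability, and the asymmetry contradicts the equality.

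As an alternative or complement, we can note that the induced law of $X$ is itself not exchangeable, since $\Pbb(1,0)=\tfrac12\cdot\tfrac58\ne\Pbb(0,1)=\tfrac12\cdot\tfrac{1}{4}$.
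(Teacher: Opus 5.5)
Your core argument is correct and is essentially the paper's. Under any exchangeable law the posterior mean is a function of $(n,S_n)$, so a bounded martingale with $\theta_2(1,0)\ne\theta_2(0,1)$ suffices, and your values $\tfrac38\ne\tfrac{7}{16}$ are right. Only the example differs. The paper takes i.i.d.\ $\Bern(\tfrac12)$ data and $\theta_n=\tfrac12+\sum_{i\le n}c_i(2X_i-1)$ with $c_i=2^{-(i+2)}$. Your $\theta_n$ is instead a martingale under the very law for which it is the one-step predictive. That is the Fong--Holmes--Walker setting, and it is also what the c.i.d.\ counterexample of Proposition~\ref{prop:exch-cid} actually needs. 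The paper's process is not a martingale under $X_{n+1}\mid\mathcal{F}_n\sim\Bern(\theta_n)$, because there $\Ebb[\theta_{n+1}\mid\mathcal{F}_n]=\theta_n+c_{n+1}(2\theta_n-1)$.

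Two side points need repair.

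First, your closing ``alternative'' computation is wrong. In fact $\Pbb(1,0)=\theta_0(1-\theta_1)=\tfrac12\cdot\tfrac14=\tfrac18=\Pbb(0,1)$; the factor $\tfrac58=1-\theta_2(1,0)$ belongs to the third step. This equality is forced in any such scheme, since the martingale property gives $\Pbb(X_2=1)=\Ebb[\theta_1]=\theta_0=\Pbb(X_1=1)$. Your law is non-exchangeable only from $n=3$ on, e.g.\ $\Pbb(1,0,0)=\tfrac12\cdot\tfrac14\cdot\tfrac58=\tfrac{5}{64}\ne\tfrac{9}{128}=\tfrac12\cdot\tfrac14\cdot\tfrac{9}{16}=\Pbb(0,1,0)$. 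Even then, non-exchangeability of your law proves the proposition only once you add one more fact. A posterior-mean process fixes its law through the one-step predictives, so any exchangeable $Q$ with $\Ebb_Q[\theta\mid\mathcal{F}_n]=\theta_n$ would have to coincide with your law.

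Second, you flag the degenerate case but never close it. That case is really every $\Pi$ supported on $\{0,1\}$, not just $\delta_0$ and $\delta_1$. It is immediate: such a $\Pi$ has posterior mean $x$ on each charged atom $\{X_1=x\}$, at least one atom is charged, and $\theta_1\in\{\tfrac14,\tfrac34\}$ on both. The paper's proof leaves this case implicit as well.
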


\begin{proof}
Under exchangeability, the posterior mean~\eqref{eq:posterior-general} is
a function of $(n,S_n)$ alone.  We exhibit a $[0,1]$-valued martingale
that depends on the order of observations.

Let $(X_i)_{i\ge 1}$ be i.i.d.\ $\Bern(1/2)$ and set
$c_i=2^{-(i+2)}$, so $\sum_{i\ge 1}c_i=1/4$.  Define
\[
\theta_n=\tfrac12+\sum_{i=1}^n c_i(2X_i-1).
\]
Since $\Ebb[2X_i-1\mid\mathcal{F}_{i-1}]=0$, the process $(\theta_n)$
is an $\mathcal{F}_n$-martingale.  Because
$|\sum_{i=1}^n c_i(2X_i-1)|\le\sum_{i\ge 1}c_i=1/4$, we have
$\theta_n\in[1/4,\,3/4]\subset[0,1]$ for all~$n$.  However,
$\theta_n$ depends on the signed increments $2X_i-1$ individually,
not merely on $S_n$.  For example, $\theta_2$ takes the value
$1/2+c_1-c_2$ when $(X_1,X_2)=(1,0)$ but $1/2-c_1+c_2$ when
$(X_1,X_2)=(0,1)$; both have $S_2=1$ yet produce different values of
$\theta_2$.  Hence $(\theta_n)$ cannot be a posterior mean under any
exchangeable Bernoulli law.
\end{proof}

\begin{definition}[Conditionally identically distributed]\label{def:cid}
A sequence $(X_i)_{i\ge 1}$ is \emph{c.i.d.}\ if for every $n\ge 1$ and
every measurable~$A$,
$\Pbb(X_{n+1}\in A\mid\mathcal{F}_n)=\Pbb(X_{n+2}\in A\mid\mathcal{F}_n)$
a.s.
\end{definition}

\begin{proposition}\label{prop:exch-cid}
Every exchangeable Bernoulli sequence is c.i.d.  The converse is false.
\end{proposition}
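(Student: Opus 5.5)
The claim has two parts, and I will handle them separately.

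\emph{Exchangeable implies c.i.d.} By de~Finetti (Theorem~\ref{thm:definetti}), conditionally on $\theta$ the $X_i$ are i.i.d.\ $\Bern(\theta)$. For $j\ge 1$ I condition on $\sigma(\theta)\vee\mathcal{F}_n$ and use the tower property to get
\[
\Pbb(X_{n+j}=1\mid\mathcal{F}_n)=\Ebb\bigl[\Pbb(X_{n+j}=1\mid\theta,\mathcal{F}_n)\mid\mathcal{F}_n\bigr]=\Ebb[\theta\mid\mathcal{F}_n].
\]
The right-hand side does not depend on $j$, so taking $j=1$ and $j=2$ gives the c.i.d.\ identity for $A=\{1\}$. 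The case $A=\{0\}$ follows by complement. Since the state space is $\{0,1\}$, these are all the measurable sets that matter.

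There is also an alternative route that avoids de~Finetti. Fix a cylinder $B\in\mathcal{F}_n$. The transposition swapping coordinates $n+1$ and $n+2$ leaves $X_{1:n}$ fixed, so exchangeability gives $\Pbb(B,X_{n+1}=1)=\Pbb(B,X_{n+2}=1)$. A $\pi$--$\lambda$ argument over $B$ then finishes the proof. I would present this route, since it is elementary.

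\emph{The converse fails.} Here I need a c.i.d.\ $\{0,1\}$-valued sequence that is not exchangeable. I would take a Pólya-type (Berti--Pratelli--Rigo) sequence with time-varying reinforcement:
\[
\Pbb(X_{n+1}=1\mid\mathcal{F}_n)=\theta_n,\qquad \theta_n=\theta_{n-1}+a_n(X_n-\theta_{n-1}),\qquad \theta_0=\tfrac12,
\]
where the weights $a_n\in(0,1)$ are deterministic. A sequence whose one-step predictive $\theta_n$ is an $\mathcal{F}_n$-martingale is c.i.d., because
\[
\Pbb(X_{n+2}=1\mid\mathcal{F}_n)=\Ebb[\theta_{n+1}\mid\mathcal{F}_n]=\theta_n.
\]
This update is indeed a martingale, since $\Ebb[X_{n+1}-\theta_n\mid\mathcal{F}_n]=0$, and it stays in $[0,1]$ because it is a convex combination. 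In fact $\theta_n$ is a martingale for any $[0,1]$-valued weights, so I just need a choice that breaks symmetry.

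I pick $a_1=\tfrac12$ and $a_2=\tfrac34$, and compute $\Pbb(X_1=1,X_2=0,X_3=0)$ versus $\Pbb(X_1=0,X_2=0,X_3=1)$ directly.

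For the sequence $(1,0,0)$ I get $\theta_1=\tfrac34$. Then
\[
\Pbb(X_2=0\mid X_1=1)=\tfrac14,\qquad \theta_2=\tfrac34\cdot\tfrac14=\tfrac3{16},\qquad \Pbb(X_3=0\mid\cdot)=\tfrac{13}{16}.
\]
The product is $\tfrac12\cdot\tfrac14\cdot\tfrac{13}{16}=\tfrac{13}{128}$.

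For the sequence $(0,0,1)$ I get $\theta_1=\tfrac14$. Then
\[
\Pbb(X_2=0\mid X_1=0)=\tfrac34,\qquad \theta_2=\tfrac14\cdot\tfrac14=\tfrac1{16},\qquad \Pbb(X_3=1\mid\cdot)=\tfrac1{16}.
\]
The product is $\tfrac12\cdot\tfrac34\cdot\tfrac1{16}=\tfrac{3}{128}$.

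The two probabilities differ, so the sequence is not exchangeable.

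The only real obstacle is verifying the c.i.d.\ property of the counterexample, and the martingale observation settles it. One could instead reuse the order-dependent martingale from Proposition~\ref{prop:mart-not-exch} as the predictive $\theta_n$. I prefer the explicit finite computation above, because it directly exhibits the failure of permutation invariance.
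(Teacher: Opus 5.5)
Your proof is correct. The forward direction is essentially the paper's: the paper also reads off $\Bern(\Ebb[\theta\mid\mathcal{F}_n])$ for every $X_{n+j}$ from de~Finetti. Your transposition argument is a valid elementary alternative.

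The converse is where you genuinely differ, and your route is the sound one. The paper takes the order-dependent process $\theta_n=\tfrac12+\sum_{i\le n}c_i(2X_i-1)$ from Proposition~\ref{prop:mart-not-exch}, declares $X_{n+1}\mid\mathcal{F}_n\sim\Bern(\theta_n)$, and invokes ``the martingale property.'' That process is a martingale only under i.i.d.\ $\Bern(\tfrac12)$ data. Under the law the new predictive generates, $\Ebb[\theta_{n+1}\mid\mathcal{F}_n]=\theta_n+c_{n+1}(2\theta_n-1)$, which differs from $\theta_n$ whenever $\theta_n\neq\tfrac12$. For example, given $X_1=1$ one has $\Pbb(X_2=1\mid X_1=1)=\tfrac58$ but $\Pbb(X_3=1\mid X_1=1)=\tfrac58+\tfrac1{16}\cdot\tfrac14=\tfrac{41}{64}$, so that sequence is not c.i.d.

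Your update $\theta_n=(1-a_n)\theta_{n-1}+a_nX_n$ avoids this problem. You verify the martingale property under the very law it generates, which is exactly what the identity $\Pbb(X_{n+2}=1\mid\mathcal{F}_n)=\Ebb[\theta_{n+1}\mid\mathcal{F}_n]$ requires. Your explicit computation $\tfrac{13}{128}\neq\tfrac{3}{128}$ then settles non-exchangeability directly, without the step from order-dependence of $\theta_n$ to failure of exchangeability.

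For the same reason, you should delete your closing remark that the martingale of Proposition~\ref{prop:mart-not-exch} could be reused as the predictive. Used that way, it does not produce a c.i.d.\ sequence.
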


\begin{proof}
Exchangeability $\Rightarrow$ c.i.d.: under exchangeability, the
conditional distribution of $X_{n+j}$ given $\mathcal{F}_n$ depends
only on $(n,S_n)$ and equals $\Bern(\Ebb[\theta\mid\mathcal{F}_n])$
for all $j\ge 1$.

For the converse, let $(\theta_n)$ be the bounded martingale from
Proposition~\ref{prop:mart-not-exch}, and define a predictive scheme
by $X_{n+1}\mid\mathcal{F}_n\sim\Bern(\theta_n)$.  We claim this
sequence is c.i.d.  For each $j\ge 1$, the conditional distribution
of $X_{n+j}$ given $\mathcal{F}_n$ is Bernoulli, so its law is
determined by its mean.  By the martingale property,
$\Ebb[X_{n+j}\mid\mathcal{F}_n]
=\Ebb[\theta_{n+j-1}\mid\mathcal{F}_n]=\theta_n$,
so $X_{n+j}\mid\mathcal{F}_n\sim\Bern(\theta_n)$ for all $j\ge 1$.
Since the conditional laws are identical (not merely the conditional
means), the sequence is c.i.d.\ (Definition~\ref{def:cid}).
But the sequence is not exchangeable, because $\theta_n$ depends
on the order of observations rather than on $S_n$ alone
(Proposition~\ref{prop:mart-not-exch}).
See also Berti, Pratelli, and Rigo~(\citeyear{berti2004limit}).
\end{proof}

For rates of convergence of predictive distributions under c.i.d.,
see Berti, Crimaldi, Pratelli and
Rigo~(\citeyear{berti2009rate}).

\begin{remark}[Role of c.i.d.\ in predictive coherence]\label{rem:cid-role}
The c.i.d.\ property guarantees that the one-step conditional
distribution $\Pbb(X_{n+1}\in\cdot\mid\mathcal{F}_n)$ is the same for
all future indices $j\ge 1$.  This is \emph{one-step} predictive
coherence (Definition~\ref{def:pred-coherence} with $k=1$): knowing
$\theta_n$ determines $\Pbb(X_{n+j}=1\mid\mathcal{F}_n)=\theta_n$ for
every~$j$.  However, c.i.d.\ does \emph{not} imply that future
observations are conditionally independent given $\mathcal{F}_n$; that
requires knowledge of the full conditional law of $\theta_\infty$.  For
$k\ge 2$, the joint predictive
$\Pbb(X_{n+1}=x_1,X_{n+2}=x_2\mid\mathcal{F}_n)$ depends on the
posterior variance through~\eqref{eq:k2-identity}, which is not
determined by the c.i.d.\ property alone.  Thus c.i.d.\ $\Rightarrow$
$1$-step coherence, but c.i.d.\ $\not\Rightarrow$ $k$-step coherence
for $k\ge 2$, paralleling Theorem~\ref{thm:insuff} at the process
level.
\end{remark}

\section{The predictive moment hierarchy}\label{sec:kreps}

\subsection{Moment representation}

\begin{theorem}[Moment representation]\label{thm:kreps}
Let $(X_i)_{i\ge 1}$ be exchangeable Bernoulli with mixing measure $\Pi$.
For any $n\ge 0$, $k\ge 1$, and pattern $x_{1:k}\in\{0,1\}^k$ with
$s=\sum_{i=1}^k x_i$,
\begin{equation}\label{eq:kreps-pattern}
\Pbb(X_{n+1:n+k}=x_{1:k}\mid\mathcal{F}_n)
=\Ebb\bigl[\theta^s(1-\theta)^{k-s}\mid\mathcal{F}_n\bigr].
\end{equation}
In particular, the $k$-step run probability is
\begin{equation}\label{eq:kreps-run}
\Pbb(X_{n+1}=\cdots=X_{n+k}=0\mid\mathcal{F}_n)
=\Ebb\bigl[(1-\theta)^k\mid\mathcal{F}_n\bigr].
\end{equation}
\end{theorem}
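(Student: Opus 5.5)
The plan is to compute the conditional probability as a ratio of two cylinder probabilities and evaluate both with de~Finetti's representation (Theorem~\ref{thm:definetti}). Since $\mathcal{F}_n=\sigma(X_{1:n})$ is generated by a finite partition into cylinders $\{X_{1:n}=y_{1:n}\}$, it suffices to check \eqref{eq:kreps-pattern} on each atom with positive probability. Null atoms can be ignored, because conditional expectations are only defined up to a.s.\ equality.

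Fix $y_{1:n}\in\{0,1\}^n$ with $t=\sum_i y_i$ and $\Pbb(X_{1:n}=y_{1:n})>0$. Applying Theorem~\ref{thm:definetti} to the length-$(n+k)$ cylinder $(y_{1:n},x_{1:k})$ and to the length-$n$ cylinder $y_{1:n}$ gives
\[
\Pbb(X_{n+1:n+k}=x_{1:k}\mid X_{1:n}=y_{1:n})
=\frac{\int_0^1\theta^{t+s}(1-\theta)^{n-t+k-s}\dx\Pi(\theta)}
{\int_0^1\theta^{t}(1-\theta)^{n-t}\dx\Pi(\theta)}.
\]
Factor the integrand of the numerator as $\theta^s(1-\theta)^{k-s}\cdot\theta^t(1-\theta)^{n-t}$. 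By the posterior formula \eqref{eq:posterior-general}, the ratio is exactly $\int\theta^s(1-\theta)^{k-s}\dx\Pi(\theta\mid\mathcal{F}_n)$ evaluated on that atom.

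Next, identify this quantity with $\Ebb[\theta^s(1-\theta)^{k-s}\mid\mathcal{F}_n]$. This is the one step that needs care, namely the meaning of ``$\theta$'' as a random variable. I would realize $\theta$ as the limit $\theta_\infty=\lim S_m/m$ from the proof of Theorem~\ref{thm:definetti}. Under the mixture $\Pbb=\int\Pbb_\theta\dx\Pi(\theta)$, the pair $(\theta,X_{1:n})$ has the joint law $\Pi(d\theta)\,\theta^{t}(1-\theta)^{n-t}$. Hence, for any bounded measurable $f$, Bayes' formula shows that $\Ebb[f(\theta)\mid X_{1:n}=y_{1:n}]$ equals the ratio above with $f$ in place of $\theta^s(1-\theta)^{k-s}$. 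Equivalently, conditionally on $\theta$ the observations are i.i.d.\ $\Bern(\theta)$. The tower property then gives $\Pbb(X_{n+1:n+k}=x_{1:k}\mid\mathcal{F}_n)=\Ebb[\Pbb(X_{n+1:n+k}=x_{1:k}\mid\theta,\mathcal{F}_n)\mid\mathcal{F}_n]$. By conditional independence the inner probability is $\theta^s(1-\theta)^{k-s}$, which yields \eqref{eq:kreps-pattern} directly. I would present this tower argument as the main proof and use the ratio computation as the explicit check.

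Finally, \eqref{eq:kreps-run} is the special case $x_{1:k}=(0,\dots,0)$, $s=0$. The case $n=0$ (trivial $\mathcal{F}_0$) is covered by the same argument. The main obstacle is only the bookkeeping of identifying $\theta$ as a genuine random variable on the path space, so that conditioning on $(\theta,\mathcal{F}_n)$ makes sense. The rest is a one-line computation.
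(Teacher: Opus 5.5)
Your proposal is correct, and its main argument is exactly the paper's proof: de~Finetti gives that the future block is conditionally i.i.d.\ given $\theta$, and the tower property over $\Pi(\cdot\mid\mathcal{F}_n)$ finishes it. Your explicit ratio computation on the atoms of $\mathcal{F}_n$, and your identification of $\theta$ with $\lim S_m/m$, are sound additions that spell out points the paper's two-line proof leaves implicit.
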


\begin{proof}
By Theorem~\ref{thm:definetti}, $X_{n+1},\ldots,X_{n+k}$ are
conditionally i.i.d.\ $\Bern(\theta)$ and conditionally independent of
$\mathcal{F}_n$.  Hence
$\Pbb(X_{n+1:n+k}=x_{1:k}\mid\mathcal{F}_n,\theta)
=\theta^s(1-\theta)^{k-s}$.
Integrate over $\Pi(\dx\theta\mid\mathcal{F}_n)$ by the tower property.
\end{proof}

\begin{corollary}[Moment expansion]\label{cor:moment-hierarchy}
For any $k\ge 1$,
\begin{equation}\label{eq:moment-expansion}
\Ebb\bigl[(1-\theta)^k\mid\mathcal{F}_n\bigr]
=\sum_{j=0}^k\binom{k}{j}(-1)^j\,\Ebb[\theta^j\mid\mathcal{F}_n].
\end{equation}
The $k$-step run probability depends on posterior moments up to order~$k$.
\end{corollary}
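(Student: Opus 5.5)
The plan is to start from the run-probability identity~\eqref{eq:kreps-run} of Theorem~\ref{thm:kreps}, which writes the left-hand side as $\Ebb[(1-\theta)^k\mid\mathcal{F}_n]$. Everything then reduces to expanding the integrand pointwise and exchanging a finite sum with conditional expectation.

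First, for each fixed $\theta\in[0,1]$, the binomial theorem gives
\[
(1-\theta)^k=\sum_{j=0}^k\binom{k}{j}(-1)^j\theta^j,
\]
an identity of bounded measurable functions on $[0,1]$. Second, I will integrate both sides against the posterior $\Pi(\dx\theta\mid\mathcal{F}_n)$ of~\eqref{eq:posterior-general}, or equivalently take conditional expectations given $\mathcal{F}_n$. Linearity of conditional expectation for a finite sum of bounded, hence integrable, random variables $\theta^j$ yields~\eqref{eq:moment-expansion} almost surely. No convergence issue arises, since the sum has only $k+1$ terms and each moment $\Ebb[\theta^j\mid\mathcal{F}_n]\in[0,1]$ is well defined.

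For the qualitative claim that the run probability ``depends on posterior moments up to order~$k$'', I will note two things. The right-hand side involves only $\Ebb[\theta^j\mid\mathcal{F}_n]$ for $j\le k$. The top coefficient $\binom{k}{k}(-1)^k=\pm1$ is nonzero, so the $k$th moment genuinely enters. Concretely, two posteriors that share moments of orders $0,\ldots,k-1$ but have different $k$th moments produce different run probabilities. Finally, I will record the case $k=2$ as a check, $\Ebb[(1-\theta)^2\mid\mathcal{F}_n]=(1-m_n)^2+\Var(\theta\mid\mathcal{F}_n)$, which is the identity used later.

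There is no real obstacle. The only point needing care is the interpretation of ``depends on'': it should mean nontrivial dependence through the nonvanishing leading coefficient, not merely that the formula contains that moment. I would handle it with the remark above rather than constructing explicit measures, leaving such constructions to Theorem~\ref{thm:insuff}.
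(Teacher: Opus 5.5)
Your proposal is correct and matches the paper's proof: expand $(1-\theta)^k$ by the binomial theorem and take conditional expectations term by term, using linearity over the finite sum. Your remark that the leading coefficient $\binom{k}{k}(-1)^k=\pm1$ is nonzero is the same justification the paper gives, just after the $k=4$ expansion, for why the $k$th moment genuinely enters.
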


\begin{proof}
Apply the binomial theorem to
$(1-\theta)^k=\sum_{j=0}^k\binom{k}{j}(-1)^j\theta^j$ and take
conditional expectations.
\end{proof}

Each additional prediction step introduces exactly one new posterior moment.
The moment hierarchy thereby induces a strictly increasing filtration of
predictive $\sigma$-algebras indexed by horizon length.

\subsection{Injectivity}

\begin{theorem}[Injectivity]\label{thm:injective}
The mapping
$\Pi(\cdot\mid\mathcal{F}_n)\longmapsto
\bigl\{\Ebb[(1-\theta)^k\mid\mathcal{F}_n]:k\ge 1\bigr\}$
is injective.  The full sequence of $k$-step run probabilities uniquely
determines the posterior $\Pi(\cdot\mid\mathcal{F}_n)$.
\end{theorem}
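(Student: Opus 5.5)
The plan is to reduce the statement to the classical fact that a probability measure on a compact interval is determined by its moments (the Hausdorff moment problem), and then invert the binomial relation of Corollary~\ref{cor:moment-hierarchy}. Write $\mu=\Pi(\cdot\mid\mathcal{F}_n)$, a probability measure on $[0,1]$ (for fixed data, on the event where the normaliser in \eqref{eq:posterior-general} is positive). Set $a_k=\Ebb[(1-\theta)^k\mid\mathcal{F}_n]=\int(1-\theta)^k\,\mu(\dx\theta)$ and $m_j=\int\theta^j\,\mu(\dx\theta)$, with $a_0=m_0=1$ since $\mu$ is a probability measure.

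\emph{Step 1: recover all moments.} The sequence $(a_k)_{k\ge 0}$ determines $(m_j)_{j\ge 0}$ by a triangular inversion. Apply the binomial theorem to $\theta^j=(1-(1-\theta))^j$ instead of the expansion used in Corollary~\ref{cor:moment-hierarchy}. This gives
\[
m_j=\sum_{i=0}^{j}\binom{j}{i}(-1)^i a_i .
\]
The inversion can equally be done by induction on $k$, since the coefficient of $m_k$ in \eqref{eq:moment-expansion} is $(-1)^k\neq 0$. Hence two posteriors with the same run probabilities for every $k\ge 1$ share every moment.

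\emph{Step 2: moments determine the measure.} Suppose $\mu,\nu$ are probability measures on $[0,1]$ with equal moments. Then $\int p\,d\mu=\int p\,d\nu$ for every polynomial $p$. By Weierstrass approximation, polynomials are uniformly dense in $C([0,1])$, so $\int f\,d\mu=\int f\,d\nu$ for every continuous $f$. The Riesz representation theorem (finite Borel measures on a compact metric space are determined by their action on continuous functions) then gives $\mu=\nu$. This establishes injectivity.

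\emph{Main obstacle.} The step needing the most care is Step 2, specifically the passage from continuous test functions to equality on all Borel sets. I would handle it concretely. Approximate $\ind_{[0,t]}$ from above by continuous piecewise-linear functions decreasing pointwise to it, and use dominated convergence to get equal distribution functions, hence equal measures. Compactness of $[0,1]$ is exactly what makes the moment problem determinate here; on unbounded supports this step would fail. It is worth remarking that this is the place where the Hausdorff moment theorem cited in the introduction enters.
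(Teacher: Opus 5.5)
Your proposal is correct and follows the paper's proof: binomial (M\"obius) inversion recovers all moments from the run probabilities, and then determinacy of the moment problem on $[0,1]$ identifies the measure. Where the paper simply cites Hausdorff's theorem for the second step, you prove it directly via Weierstrass approximation and a limit of continuous functions to indicators, which is the alternative the paper itself notes in Remark~\ref{rem:poly-density}.
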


\begin{proof}
The argument proceeds in two steps.

\emph{Step~1: run probabilities determine moments.}
By M\"obius inversion (Proposition~\ref{prop:mobius}),
$\Ebb[\theta^j\mid\mathcal{F}_n]
=\sum_{\ell=0}^{j}\binom{j}{\ell}(-1)^\ell\,
\Ebb[(1-\theta)^\ell\mid\mathcal{F}_n]$
for each $j\ge 1$.  The matrix relating the run-probability sequence
$(r_0,r_1,\ldots)$ to the moment sequence $(\mu_0,\mu_1,\ldots)$ is
upper-triangular with $\pm 1$ on the diagonal, hence invertible.
Knowing $\Ebb[(1-\theta)^k\mid\mathcal{F}_n]$ for all $k\ge 1$
therefore determines $\Ebb[\theta^j\mid\mathcal{F}_n]$ for all $j\ge 1$.

\emph{Step~2: moments determine the measure on $[0,1]$.}
Since $\theta\in[0,1]$, the Hausdorff moment theorem
(Theorem~\ref{thm:hausdorff}) guarantees that the moment sequence
$(\Ebb[\theta^j\mid\mathcal{F}_n])_{j\ge 0}$ uniquely determines the
conditional law $\Pi(\cdot\mid\mathcal{F}_n)$.
Combining Steps~1 and~2 establishes injectivity.
\end{proof}

\begin{remark}[Polynomial density]\label{rem:poly-density}
Theorem~\ref{thm:injective} also follows from Weierstrass approximation:
polynomials are dense in $C[0,1]$, so the moment sequence determines all
expectations $\Ebb[f(\theta)\mid\mathcal{F}_n]$ for $f\in C[0,1]$, hence
the measure.  Injectivity relies on the compact support $[0,1]$; on
$\mathbb{R}$, the Stieltjes moment problem may have non-unique solutions
(e.g., the log-normal distribution).
\end{remark}

\begin{remark}[Comparison with the real line]\label{rem:real-line}
On $[0,1]$, the moment problem is \emph{determinate}: a probability measure
is uniquely determined by its moment sequence (Theorem~\ref{thm:hausdorff}).
This is the mechanism behind Theorem~\ref{thm:injective}.  On
$\mathbb{R}$, the Hamburger moment problem is indeterminate in general:
distinct measures can share all moments.  A classical example is the
log-normal: $X\sim\mathrm{LogNormal}(0,1)$ has the same moments as the
family $X\cdot(1+a\sin(2\pi\log X))$ for $|a|\le 1$.  The moment
hierarchy established in Section~\ref{sec:kreps} therefore provides a
complete characterization of the posterior only because $\theta$ takes
values in the compact interval $[0,1]$.  For parameters in
$\mathbb{R}^d$, analogous results would require additional conditions
such as exponential moment bounds (Carleman's condition) or restriction to
exponential families with compact sufficient statistics.
\end{remark}

\subsection{Binomial inversion}

\begin{proposition}[M\"obius inversion]\label{prop:mobius}
Let $r_k=\Ebb[(1-\theta)^k\mid\mathcal{F}_n]$ denote the $k$-step run
probability and $\mu_j=\Ebb[\theta^j\mid\mathcal{F}_n]$ with $\mu_0=1$.
Then
\begin{equation}\label{eq:mobius}
\mu_j=\sum_{\ell=0}^{j}\binom{j}{\ell}(-1)^{\ell}\,r_\ell,
\qquad j\ge 0.
\end{equation}
The matrix relating $(\mu_0,\ldots,\mu_k)$ to $(r_0,\ldots,r_k)$ is
upper-triangular with diagonal entries $\pm 1$, hence invertible.
\end{proposition}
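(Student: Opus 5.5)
The plan is to apply the binomial theorem to $\theta=1-(1-\theta)$ and take conditional expectations. This is the exact mirror of the proof of Corollary~\ref{cor:moment-hierarchy}, with the roles of $\theta$ and $1-\theta$ exchanged. For each $j\ge 0$ we have the polynomial identity
\[
\theta^j=\bigl(1-(1-\theta)\bigr)^j=\sum_{\ell=0}^{j}\binom{j}{\ell}(-1)^\ell(1-\theta)^\ell ,
\]
valid pointwise on $[0,1]$. Every term is bounded, so linearity of conditional expectation applies with no integrability issues. Taking $\Ebb[\,\cdot\mid\mathcal{F}_n]$ of both sides gives $\mu_j=\sum_{\ell}\binom{j}{\ell}(-1)^\ell r_\ell$ almost surely, with $r_0=1=\mu_0$ handling the $j=0$ case. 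This establishes \eqref{eq:mobius}.

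For the matrix statement, I would record both directions as linear maps between $(\mu_0,\ldots,\mu_k)$ and $(r_0,\ldots,r_k)$. Equation~\eqref{eq:moment-expansion} gives $r=B\mu$ with $B_{\ell j}=\binom{\ell}{j}(-1)^j$ for $j\le \ell$ and zero otherwise. Equation~\eqref{eq:mobius} gives $\mu=B r$ with the same matrix $B$. In this indexing $B$ is triangular; its transpose is upper-triangular, matching the convention of the statement, and only the triangular shape matters. The diagonal entries are $\binom{j}{j}(-1)^j=(-1)^j=\pm1$, so $\det B=\pm1$ and $B$ is invertible. Substituting one relation into the other also yields $B^2=I$, i.e.\ $B$ is an involution. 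This can be checked directly from the identity
\[
\sum_{m}\binom{\ell}{m}\binom{m}{j}(-1)^{m+j}=\binom{\ell}{j}\sum_{i}\binom{\ell-j}{i}(-1)^{i}=\delta_{\ell j},
\]
which confirms that the two expansions are mutually inverse.

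I expect no step to pose a real obstacle. The only care needed is bookkeeping: fixing the orientation (upper versus lower triangular) consistently with the statement, and noting that the conditional expectations are defined a.s., so \eqref{eq:mobius} holds almost surely. Invertibility is immediate from the triangular structure with unit-modulus diagonal, independent of the involution check.
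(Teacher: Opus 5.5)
Your proposal is correct and is the paper's own proof: expand $\theta^j=(1-(1-\theta))^j$ binomially and take conditional expectations. Your explicit matrix bookkeeping is also correct. This covers the triangular shape (lower in your indexing, upper after transposing), the diagonal entries $(-1)^j$, and the involution $B^2=I$. Together these fill in the invertibility claim, which the paper's one-line proof leaves implicit.
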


\begin{proof}
Expand $\theta^j=(1-(1-\theta))^j
=\sum_{\ell=0}^{j}\binom{j}{\ell}(-1)^\ell(1-\theta)^\ell$ and take
conditional expectations.
\end{proof}

\begin{example}[Inversion at $j=2$]\label{ex:mobius-j2}
From the forward relation: $r_2=1-2\mu_1+\mu_2$.  By
M\"obius inversion~\eqref{eq:mobius}: $\mu_2=r_0-2r_1+r_2$, i.e.,
$\mu_2=1-2(1-\mu_1)+r_2=2\mu_1-1+r_2$.  Substituting
$r_2=(1-\mu_1)^2+\sigma_n^2$ gives
$\mu_2=2\mu_1-1+(1-\mu_1)^2+\sigma_n^2=\mu_1^2+\sigma_n^2$,
recovering $\sigma_n^2=\mu_2-\mu_1^2$ as expected.  This illustrates
the triangular structure: the $j$-th moment and the $j$-step run
probability determine each other given lower-order quantities.
\end{example}

\subsection{Explicit expansion for small \texorpdfstring{$k$}{k}}

Write $\mu_j=\Ebb[\theta^j\mid\mathcal{F}_n]$.

For $k=1$: $\Ebb[(1-\theta)\mid\mathcal{F}_n]=1-\mu_1$.

For $k=2$:
\begin{equation}\label{eq:k2-identity}
\Ebb[(1-\theta)^2\mid\mathcal{F}_n]=(1-\mu_1)^2+\Var(\theta\mid\mathcal{F}_n).
\end{equation}
The two-step predictive depends on the posterior variance, which is not
determined by the posterior mean.  The discrepancy between Bayes and
plug-in equals $\Var(\theta\mid\mathcal{F}_n)$ exactly.

For $k=3$:
$\Ebb[(1-\theta)^3\mid\mathcal{F}_n]=1-3\mu_1+3\mu_2-\mu_3$.

For $k=4$:
\begin{align}\label{eq:k4-identity}
\Ebb[(1-\theta)^4\mid\mathcal{F}_n]
&=1-4\mu_1+6\mu_2-4\mu_3+\mu_4\nonumber\\
&=(1-\mu_1)^4+6(1-\mu_1)^2\sigma_n^2
+\bigl(4\kappa_3(1-\mu_1)+\kappa_4+3\sigma_n^4\bigr),
\end{align}
where $\kappa_3=\Ebb[(\theta-\mu_1)^3\mid\mathcal{F}_n]$ is the third
central moment and $\kappa_4=\Ebb[(\theta-\mu_1)^4\mid\mathcal{F}_n]
-3\sigma_n^4$ is the fourth cumulant (excess kurtosis contribution).

Each additional step introduces one new moment:
$\Ebb[(1-\theta)^k\mid\mathcal{F}_n]$ cannot be expressed as a function
of $\mu_1,\ldots,\mu_{k-1}$ alone, since the coefficient of
$(-1)^k\mu_k$ in~\eqref{eq:moment-expansion} is $\binom{k}{k}=1\ne 0$.

\section{KL geometric interpretation}\label{sec:kl-geometry}

This section gives a geometric reading of the moment hierarchy in terms
of the Sanov rate function.  It adds one result
(Theorem~\ref{thm:sanov-gap})
and a figure; it is not required for subsequent sections.

For Bernoulli, $D_{\mathrm{KL}}(p\|\cdot)$ has Fisher information
$\mathcal{I}(\theta)=1/(\theta(1-\theta))$.  For any fixed
$q\in(0,1)$, the function $\theta\mapsto D_{\mathrm{KL}}(q\|\theta)$
is minimised at $\theta=q$, where its first derivative vanishes and its
second derivative equals $\mathcal{I}(q)=1/(q(1-q))$.  Evaluating at
$q=m_n$, the curvature is $\mathcal{I}(m_n)=1/(m_n(1-m_n))$.  This is
a property of the KL divergence, not of the posterior: it should not be
confused with the posterior mean's role in Bayesian updating.  The
posterior variance satisfies $\sigma_n^2\approx m_n(1-m_n)/n$
(Bernstein--von~Mises), the reciprocal of this curvature.
Predictively, the hierarchy mirrors a derivative hierarchy:
\begin{equation}\label{eq:derivative-hierarchy}
\text{$k=1$: } \nabla D_{\mathrm{KL}}\big|_{m_n}=0, \qquad
\text{$k\ge 2$: } \nabla^k D_{\mathrm{KL}}\big|_{m_n}\ne 0.
\end{equation}
The one-step predictive $1-m_n$ depends only on the zeroth-order term;
the two-step predictive is the first to require curvature
(equation~\eqref{eq:k2-identity}).

\subsection{First-order indistinguishability and second-order separation}

\begin{theorem}[Geometric separation]\label{thm:sanov-gap}
Let $m\in(0,1)$.
\begin{enumerate}[label=(\roman*),nosep]
\item \emph{First-order indistinguishability.}
For any two posteriors $\nu_1,\nu_2$ on $[0,1]$ with
$\int\theta\dx\nu_i=m$,
\[
\frac{\partial}{\partial\theta}D_{\mathrm{KL}}(m\|\theta)\Big|_{\theta=m}=0,
\]
so they are indistinguishable at first order in the Sanov expansion: the
linear approximation to the posterior log-likelihood is the same for both.
\item \emph{Second-order separation.}
The quadratic correction to the $k$-step predictive equals the posterior
variance:
\[
\Ebb\bigl[(1-\theta)^k\mid\mathcal{F}_n\bigr]-(1-m)^k
=\tfrac{k(k-1)}{2}(1-\xi)^{k-2}\sigma^2,
\quad\sigma^2=\int(\theta-m)^2\dx\nu.
\]
Distinct values of $\sigma^2$ yield distinct $k$-step predictives for
all $k\ge 2$.
\item \emph{Non-pathological gap.}
For any $m\in(0,1)$ and $k\ge 2$, the $k$-step predictive ranges over
a nondegenerate interval as $\nu$ ranges over posteriors with mean $m$;
the gap is not a boundary phenomenon
(Remark~\ref{rem:strength}).
\end{enumerate}
\end{theorem}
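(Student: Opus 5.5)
The plan is to treat the three parts separately, since each rests on a different elementary fact: a derivative computation for (i), Taylor's theorem with a mean-value remainder for (ii), and an explicit one-parameter family of mean-$m$ measures for (iii).

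For (i), I would differentiate $f(\theta)=D_{\mathrm{KL}}(m\|\theta)=m\log(m/\theta)+(1-m)\log((1-m)/(1-\theta))$ exactly as in the proof of Theorem~\ref{thm:sanov-bridge}(ii). This gives $f'(\theta)=-m/\theta+(1-m)/(1-\theta)$, which vanishes at $\theta=m$. The quantity depends on $\nu_i$ only through its mean $m$, so it is the same for both measures. That is all (i) asserts, and the step is immediate.

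For (ii), I would expand $g(\theta)=(1-\theta)^k$ around $m$ with integral-form remainder:
\[
g(\theta)=(1-m)^k-k(1-m)^{k-1}(\theta-m)+(\theta-m)^2 h(\theta),
\qquad
h(\theta)=k(k-1)\int_0^1(1-s)\bigl(1-m-s(\theta-m)\bigr)^{k-2}\,ds .
\]
The function $h$ is continuous on $[0,1]$, and its values lie in the range of $\tfrac{k(k-1)}{2}(1-x)^{k-2}$ for $x\in[0,1]$. Integrating against $\nu$ kills the linear term because $\int\theta\,d\nu=m$. The first mean value theorem for integrals, applied with the nonnegative weight $(\theta-m)^2\,d\nu$ and the continuous integrand $h$, then produces a point $\xi\in[0,1]$ with $\int(\theta-m)^2h\,d\nu=\tfrac{k(k-1)}{2}(1-\xi)^{k-2}\sigma^2$. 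If $\sigma^2=0$, any $\xi$ works. Using the integral remainder rather than a pointwise Lagrange $\xi_\theta$ avoids measurability issues.

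The main obstacle is the sentence ``distinct values of $\sigma^2$ yield distinct $k$-step predictives.'' For $k=2$ the factor $(1-\xi)^{0}=1$, so this is exact and matches \eqref{eq:k2-identity}. For $k\ge3$, however, $\xi$ depends on $\nu$, and one cannot conclude injectivity in $\sigma^2$ across arbitrary measures. I would therefore prove the claim in the form that actually holds. First, $k=2$ exactly. Second, for general $k$, the claim holds along the mixture family used in (iii): there the predictive is strictly monotone in $\sigma^2$. Third, I would state explicitly that the discrepancy is $\Theta(\sigma^2)$, with constants between $\tfrac{k(k-1)}{2}\min(1-\xi)^{k-2}$ and $\tfrac{k(k-1)}{2}$. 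The precise wording of that last statement is where I expect to need care.

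For (iii), I would fix $m\in(0,1)$ and $k\ge2$ and set $\nu_t=(1-t)\delta_m+t\bigl((1-m)\delta_0+m\delta_1\bigr)$ for $t\in[0,1]$, each of which has mean $m$. Then $\int(1-\theta)^k\,d\nu_t=(1-t)(1-m)^k+t(1-m)$, which is affine and strictly increasing in $t$ because $(1-m)>(1-m)^k$ when $k\ge2$ and $m\in(0,1)$. By continuity it sweeps out the nondegenerate interval $[(1-m)^k,\,1-m]$. This interval is in fact the full range over mean-$m$ measures. Jensen's inequality (convexity of $(1-\theta)^k$) gives the lower bound, and $(1-\theta)^k\le 1-\theta$ on $[0,1]$ gives the upper bound. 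Since the endpoints stay strictly apart for every interior $m$, the gap is not a boundary phenomenon. Along this family $\sigma^2=t\,m(1-m)$, which also supplies the monotone-in-$\sigma^2$ statement promised in (ii).
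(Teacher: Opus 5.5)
Your argument follows the paper's route on all three parts. Part (i) is the same derivative computation. Part (ii) is Proposition~\ref{prop:quantitative}: Taylor expansion, the linear term killed by the mean constraint, then the mean value theorem for integrals; your integral remainder is a tidier substitute for the pointwise Lagrange point $\xi(\theta)$. Part (iii) is the comparison of $\delta_m$ with $(1-m)\delta_0+m\delta_1$ from Lemma~\ref{lem:non-id} and Remark~\ref{rem:strength}. Your family $\nu_t$ completes that comparison: it shows every intermediate value is attained, and via $(1-\theta)^k\le 1-\theta$ it shows $1-m$ is the maximum.

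Your doubt about the sentence ``distinct values of $\sigma^2$ yield distinct $k$-step predictives'' is justified, and you can state it as a fact rather than a caution. The paper's proof reads this sentence off the $\xi$-representation. That is exactly the non sequitur you point to, since $\xi$ depends on $\nu$.

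The claim is in fact false for $k=3$. Since $\Ebb[(1-\theta)^3]=(1-m)^3+3(1-m)\sigma^2-\Ebb[(\theta-m)^3]$, skewness can offset variance. For instance, take $\nu_A=\tfrac{1}{10}\delta_{1/10}+\tfrac{4}{5}\delta_{1/2}+\tfrac{1}{10}\delta_{9/10}$ and $\nu_B=\tfrac{4}{5}\delta_{2/5}+\tfrac{1}{5}\delta_{9/10}$. Both have mean $1/2$, with variances $4/125\ne 1/25$. Yet $\int(1-\theta)^3\dx\nu_A=\int(1-\theta)^3\dx\nu_B=0.173$.

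So the defensible content of that sentence is what you list:
\begin{enumerate}[label=(\alph*),nosep]
\item exactness at $k=2$;
\item strict monotonicity in $\sigma^2=t\,m(1-m)$ along $\nu_t$;
\item a two-sided $\Theta(\sigma^2)$ bound.
\end{enumerate}

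For (c), your stated lower constant is vacuous, because $\min_{\xi\in[0,1]}(1-\xi)^{k-2}=0$ when $k\ge 3$. Instead, insert
\[
1-m-s(\theta-m)=(1-s)(1-m)+s(1-\theta)\ge(1-s)(1-m)
\]
into your formula for $h$. This gives $h(\theta)\ge(k-1)(1-m)^{k-2}$ on $[0,1]$, and hence
\[
(k-1)(1-m)^{k-2}\,\sigma^2\;\le\;\int(1-\theta)^k\dx\nu-(1-m)^k\;\le\;\tfrac{k(k-1)}{2}\,\sigma^2 .
\]
The lower constant depends on $m$ but is strictly positive, and both constants equal $1$ at $k=2$.
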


\begin{proof}
(i) The Sanov rate function $D_{\mathrm{KL}}(m\|\theta)$ achieves its
minimum at $\theta=m$ (Theorem~\ref{thm:sanov-bridge}(ii)), so the
first derivative vanishes: $\partial_\theta D_{\mathrm{KL}}(m\|\theta)\big|_{\theta=m}=0$.
Any two posteriors $\nu_1,\nu_2$ sharing mean $m$ integrate this linear
term to the same value, producing identical first-order contributions to
the posterior log-likelihood.

(ii) The second derivative of $f(\theta)=(1-\theta)^k$ at $\theta=m$ is
$k(k-1)(1-m)^{k-2}>0$ for $k\ge 2$ and $m\in(0,1)$.  By
Proposition~\ref{prop:quantitative}, the $k$-step predictive satisfies
$\Ebb[(1-\theta)^k\mid\mathcal{F}_n]-(1-m)^k
=\frac{k(k-1)}{2}(1-\xi)^{k-2}\sigma^2$,
with the correction proportional to $\sigma^2$.  Distinct posteriors
with distinct variances therefore yield distinct $k$-step predictives.

(iii) By Lemma~\ref{lem:non-id}, for any $m\in(0,1)$ and $k\ge 2$,
the point mass $\delta_m$ and a two-point distribution with mean $m$
achieve different values of $\Ebb[(1-\theta)^k]$.
Remark~\ref{rem:strength} shows the achievable range is a nondegenerate
interval, so the gap is not a boundary artefact.
\end{proof}

Geometrically, posteriors sharing the same mean are first-order
equivalent in the Sanov expansion but separate at second order via
$\sigma^2$; this is the mechanism underlying
Theorem~\ref{thm:insuff}.

\subsection{Geometric interpretation of the moment hierarchy}

Under standard Bernstein--von~Mises conditions, the posterior variance
is of order $1/n$, and the quadratic term in the KL expansion
corresponds heuristically to the leading curvature contribution.
The second moment is the first nontrivial curvature term: it
corresponds to $\nabla^2 D_{\mathrm{KL}}|_{m_n}$ and cannot be
recovered from the zeroth- and first-order terms.  Specifying only the
posterior mean retains the approximate location of the KL bowl; the
second moment requires the quadratic curvature; the $k$-th moment
requires the $k$-th derivative.

Figure~\ref{fig:sanov-geometry} plots the Sanov rate
$D_{\mathrm{KL}}(m_n\|\theta)$ for $m_n=0.4$, its quadratic approximation,
and the resulting posterior density.

\begin{figure}[htbp]
\centering
\begin{tikzpicture}
\begin{axis}[
  width=\linewidth, height=7.0cm,
  xmin=0.10, xmax=0.80,
  ymin=-0.03, ymax=0.36,
  xlabel={$\theta$},
  ylabel={},
  xtick={0.2,0.4,0.6,0.8},
  xticklabels={$0.2$,$m_n{=}0.4$,$0.6$,$0.8$},
  ytick=\empty,
  axis lines=left,
  legend style={
    at={(0.5, -0.13)},
    anchor=north,
    legend columns=-1,
    font=\small,
    cells={anchor=west},
    column sep=1.4em,
    draw=none
  },
  clip=true
]
\addplot[domain=0.10:0.80, samples=250, line width=1.6pt, blue!75!black]
  {0.4*ln(0.4/x) + 0.6*ln(0.6/(1-x))};
\addlegendentry{Full Sanov rate};
\addplot[domain=0.10:0.80, samples=200, line width=1.6pt, dashed, red!80!black]
  {(x-0.4)^2 / (2*0.4*0.6)};
\addlegendentry{Quadratic approx.\ (2nd moment)};
\addplot[domain=0.13:0.70, samples=200, line width=1.6pt, green!60!black]
  {0.38 * exp(-12*(0.4*ln(0.4/x)+0.6*ln(0.6/(1-x))))};
\addlegendentry{Posterior density ($n{=}12$)};
\addplot[dotted, gray!80, line width=1.3pt] coordinates {(0.4,-0.02) (0.4,0.34)};
\node[font=\scriptsize, gray!80, anchor=north, align=center]
  at (axis cs:0.4,-0.02) {$m_n$: location only};
\end{axis}
\end{tikzpicture}
\caption{Sanov geometry ($m_n=0.4$, $n=12$).
\textit{Blue}: the Sanov rate $D_{\mathrm{KL}}(m_n\|\theta)$,
which governs posterior shape through~\eqref{eq:sanov-posterior}.
\textit{Dashed red}: its quadratic (Fisher information) approximation,
retaining only the second-order term.
\textit{Green}: the resulting posterior density.
The dotted vertical at $m_n$ marks what first-moment information identifies.
One-step prediction requires only this location; two-step additionally
requires the curvature $\sigma_n^2$ (equation~\eqref{eq:k2-identity});
$k$-step prediction for $k\ge 3$ requires further moments.}
\label{fig:sanov-geometry}
\end{figure}
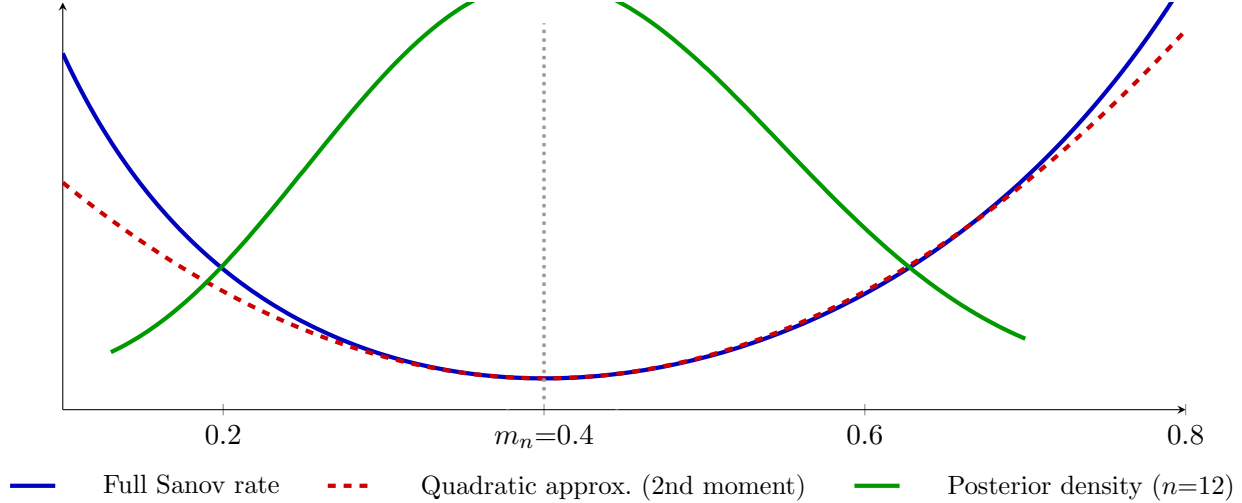

\section{Insufficiency of first-moment coherence}\label{sec:insuff}

\subsection{Non-identification and moment insufficiency}

\begin{lemma}[Non-identification]\label{lem:non-id}
For any $m\in(0,1)$ and any integer $k\ge 2$, there exist probability
measures $\nu_1,\nu_2$ on $[0,1]$ with
$\int\theta\dx\nu_1=\int\theta\dx\nu_2=m$
but
$\int(1-\theta)^k\dx\nu_1\ne\int(1-\theta)^k\dx\nu_2$.
\end{lemma}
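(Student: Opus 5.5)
The plan is to take $\nu_1=\delta_m$, the degenerate posterior at the common mean, and to choose for $\nu_2$ a nondegenerate two-point law with the same mean. Then compare $\int(1-\theta)^k\dx\nu$ through strict convexity of $f(\theta)=(1-\theta)^k$ on $[0,1]$.

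\emph{Step 1 (construction).} Since $m\in(0,1)$, fix $\varepsilon$ with $0<\varepsilon\le\min(m,1-m)$. Let
\[
\nu_2=\tfrac12\delta_{m-\varepsilon}+\tfrac12\delta_{m+\varepsilon}.
\]
Both atoms lie in $[0,1]$, and $\int\theta\dx\nu_2=m$. The extreme choice $\varepsilon=\min(m,1-m)$ shows the construction is not confined to small perturbations. Equally, $\nu_2=\Bern(m)$, with atoms at $0$ and $1$ and the same mean, would serve.

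\emph{Step 2 (strict inequality).} For $k\ge2$ we have $f''(\theta)=k(k-1)(1-\theta)^{k-2}$, which is strictly positive on $[0,1)$. So $f$ is strictly convex on $[0,1]$; the single boundary point where $f''$ vanishes does not affect strict convexity. Strict Jensen applied to the non-degenerate $\nu_2$ gives
\[
\int(1-\theta)^k\dx\nu_2=\tfrac12\bigl[(1-m+\varepsilon)^k+(1-m-\varepsilon)^k\bigr]>(1-m)^k=\int(1-\theta)^k\dx\nu_1.
\]
For a fully elementary check, expand binomially. Odd powers of $\varepsilon$ cancel, and the difference equals $\sum_{j\ge1}\binom{k}{2j}(1-m)^{k-2j}\varepsilon^{2j}$. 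Its $j=1$ term is $\binom{k}{2}(1-m)^{k-2}\varepsilon^2>0$ because $m<1$, and every other term is nonnegative. When $k=2$ the difference is exactly $\varepsilon^2=\Var_{\nu_2}(\theta)$, in agreement with \eqref{eq:k2-identity}.

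\emph{Main obstacle.} There is no serious difficulty. The only care needed is at the boundary: the atoms must stay in $[0,1]$, which the constraint on $\varepsilon$ secures. Strict positivity must also not fail when $1-m$ is small. That is why I would rely on the explicit even-power expansion, which uses $m<1$, rather than appeal to convexity loosely.
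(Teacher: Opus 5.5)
Your proof is correct and follows essentially the same route as the paper: compare $\nu_1=\delta_m$ with a nondegenerate two-point law of mean $m$, and separate them by strict convexity of $(1-\theta)^k$ via Jensen. Your symmetric $\tfrac12\delta_{m-\varepsilon}+\tfrac12\delta_{m+\varepsilon}$ is a special case of the paper's $\lambda\delta_a+(1-\lambda)\delta_b$, and your even-power binomial expansion is a valid elementary confirmation of the strict inequality.
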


\begin{proof}
Let $\nu_1=\delta_m$, so $\int(1-\theta)^k\dx\nu_1=(1-m)^k$.
Choose $0<a<m<b<1$ and set $\nu_2=\lambda\delta_a+(1-\lambda)\delta_b$
with $\lambda=(b-m)/(b-a)\in(0,1)$, ensuring $\int\theta\dx\nu_2=m$.
Since $\theta\mapsto(1-\theta)^k$ is strictly convex for $k\ge 2$,
Jensen's inequality gives
$\int(1-\theta)^k\dx\nu_2>(1-m)^k=\int(1-\theta)^k\dx\nu_1$.
\end{proof}

\begin{remark}[Extension to general functionals]\label{rem:general-nonlinear}
Lemma~\ref{lem:non-id} extends to any non-affine continuous
$f:[0,1]\to\mathbb{R}$: the mean does not determine
$\Ebb[f(\theta)\mid\mathcal{F}_n]$ (Appendix~\ref{app:general-nonlinear}).
\end{remark}

\subsection{Numerical illustration}

At $n=0$, take $\Pi_1=\BetaD(2,2)$ and
$\Pi_2=\BetaD(1,1)$.  Both have mean $1/2$.  Since
$\Ebb_{\BetaD(a,b)}[\theta^2]=a(a+1)/\bigl((a+b)(a+b+1)\bigr)$:
\begin{align*}
\Ebb_{\Pi_1}[(1-\theta)^2]&=\tfrac{3}{10},\qquad
\Ebb_{\Pi_2}[(1-\theta)^2]=\tfrac{1}{3}.
\end{align*}
Equal prior means; different two-step predictives.  The variances are
$1/20$ and $1/12$ respectively.  These priors do \emph{not} maintain
equal posterior means for $n\ge 1$: the posterior mean
$(a+S_n)/(a+b+n)$ depends on $a+b$.

\begin{theorem}[Moment insufficiency]\label{thm:insuff}
Let $k\ge 2$, $n\ge 0$, and $m_n\coloneqq
\Ebb[\theta\mid\mathcal{F}_n]\in(0,1)$.  The mapping
$m_n\mapsto\Ebb[(1-\theta)^k\mid\mathcal{F}_n]$ is set-valued:
distinct posteriors sharing mean $m_n$ yield distinct $k$-step predictives.
\end{theorem}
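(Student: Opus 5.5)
The plan is to reduce Theorem~\ref{thm:insuff} to Lemma~\ref{lem:non-id} through Theorem~\ref{thm:kreps}. By Theorem~\ref{thm:kreps}, the $k$-step run probability is $\int(1-\theta)^k\,\Pi(\dx\theta\mid\mathcal{F}_n)$. This is a linear functional of the posterior law, so the claim amounts to showing that this functional is not constant on the convex set
\[
\mathcal{M}_m=\Bigl\{\nu \text{ a probability measure on } [0,1] : \int\theta\dx\nu=m\Bigr\}, \qquad m=m_n\in(0,1).
\]
Here "set-valued" means the image of $\mathcal{M}_m$ under $\nu\mapsto\int(1-\theta)^k\dx\nu$ contains at least two points. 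I would in fact show it is a nondegenerate interval, which matches Remark~\ref{rem:strength}.

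The first step is to apply Lemma~\ref{lem:non-id} with $m=m_n$ and the given $k\ge2$. This yields $\nu_1=\delta_m$ and a two-point law $\nu_2=\lambda\delta_a+(1-\lambda)\delta_b$, both in $\mathcal{M}_m$. By strict convexity of $(1-\theta)^k$ and Jensen's inequality, their $k$-step values differ. For interior $m$, Theorem~\ref{thm:sanov-gap}(ii) quantifies the gap as $\tfrac{k(k-1)}{2}(1-\xi)^{k-2}\sigma^2>0$.

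The second step is to upgrade two values to an interval. I would take the mixtures $\nu_t=(1-t)\nu_1+t\nu_2$, which stay in $\mathcal{M}_m$. Their predictive values are affine in $t$, so every value between $(1-m)^k$ and $\int(1-\theta)^k\dx\nu_2$ is attained. Choosing the extreme law $(1-m)\delta_0+m\delta_1$ would give the maximal value $1-m$.

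The main obstacle is interpretive rather than computational. The theorem speaks of posteriors $\Pi(\cdot\mid\mathcal{F}_n)$, not arbitrary measures on $[0,1]$, so I must check that each $\nu\in\mathcal{M}_m$ actually arises as a posterior with mean $m_n$ at stage $n$ for some exchangeable law. I would handle this by choosing the prior $\Pi$ to be the measure with density proportional to $\theta^{-s}(1-\theta)^{-(n-s)}$ relative to $\nu$, for a fixed realized $S_n=s$. Updating via \eqref{eq:posterior-general} then returns exactly $\nu$. This requires $\nu$ to charge only $(0,1)$ when $0<s<n$, so I would restrict to $0<a<b<1$, as Lemma~\ref{lem:non-id} already does. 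For $n=0$ no reweighting is needed at all. With that realizability remark in place, the theorem follows.
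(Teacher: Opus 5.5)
Your argument is correct and essentially the paper's: the same $\delta_{m_n}$ versus $\lambda\delta_a+(1-\lambda)\delta_b$ (with $0<a<m_n<b<1$) pair, separated by strict Jensen and turned into run probabilities via Theorem~\ref{thm:kreps}. Your mixture-interval step and your reweighting $\Pi\propto\theta^{-s}(1-\theta)^{-(n-s)}\nu$ (which shows these laws really are posteriors at stage $n$, a point the paper leaves implicit) are sound extras for the finitely supported laws on $(0,1)$ you use, but the endpoint law $(1-m)\delta_0+m\delta_1$ charges $\{0,1\}$, which no posterior can do once $n\ge 1$, so there $1-m$ is only a supremum of realizable values, not attained.
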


\begin{proof}
Fix $m_n\in(0,1)$ and $k\ge 2$.  We construct two posteriors sharing
mean $m_n$ but yielding distinct $k$-step predictives.

Let $\nu_1=\delta_{m_n}$, so
$\int(1-\theta)^k\dx\nu_1=(1-m_n)^k$.

Choose $0<a<m_n<b<1$ and set $\nu_2=\lambda\delta_a+(1-\lambda)\delta_b$
with $\lambda=(b-m_n)/(b-a)$.  Then $\int\theta\dx\nu_2=m_n$ by
construction, but $\nu_2$ has variance
$\sigma^2=\lambda(1-\lambda)(b-a)^2>0$.  Since
$\theta\mapsto(1-\theta)^k$ is strictly convex for $k\ge 2$,
Jensen's inequality gives
$\int(1-\theta)^k\dx\nu_2>(1-m_n)^k=\int(1-\theta)^k\dx\nu_1$.

By Theorem~\ref{thm:kreps}, the $k$-step run probability under
posterior $\nu_i$ is
$\Ebb[(1-\theta)^k\mid\mathcal{F}_n]=\int(1-\theta)^k\dx\nu_i$,
so the two posteriors yield distinct $k$-step predictives despite
sharing the same mean.  Since the choice of $a,b$ is arbitrary
(subject to $0<a<m_n<b<1$), the mapping
$m_n\mapsto\Ebb[(1-\theta)^k\mid\mathcal{F}_n]$ is set-valued.
\end{proof}

\subsection{Quantitative bound}

\begin{proposition}[Second-order discrepancy]\label{prop:quantitative}
Let $k\ge 2$ and let $\Pi(\cdot\mid\mathcal{F}_n)$ have mean $m_n$
and variance $\sigma_n^2$.  Then
\begin{equation}\label{eq:quantitative-bound}
\Ebb[(1-\theta)^k\mid\mathcal{F}_n]-(1-m_n)^k
=\frac{k(k-1)}{2}(1-\xi_n)^{k-2}\,\sigma_n^2
\end{equation}
for some $\xi_n\in[0,1]$.  In particular,
\begin{equation}\label{eq:quantitative-bounds}
0<\Ebb[(1-\theta)^k\mid\mathcal{F}_n]-(1-m_n)^k
\le\frac{k(k-1)}{2}\,\sigma_n^2.
\end{equation}
\end{proposition}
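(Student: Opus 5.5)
The plan is a second-order Taylor expansion of $f(\theta)=(1-\theta)^k$ about the posterior mean $m_n$. The first-order term is killed by the mean constraint, and the remainder is converted into a single value $f''(\xi_n)$ times $\sigma_n^2$ by an intermediate-value argument. Here $f'(\theta)=-k(1-\theta)^{k-1}$ and $f''(\theta)=k(k-1)(1-\theta)^{k-2}$, which is continuous on $[0,1]$ with values in $[0,k(k-1)]$.

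\emph{Step 1: a measurable remainder quotient.} To avoid a measurable-selection issue with a pointwise Lagrange point $\eta(\theta)$, I work with the quotient
\[
g(\theta)=\frac{f(\theta)-f(m_n)-f'(m_n)(\theta-m_n)}{(\theta-m_n)^2}\quad(\theta\ne m_n),
\qquad g(m_n)=\tfrac12 f''(m_n).
\]
This $g$ is continuous on $[0,1]$, hence Borel. By the Lagrange form of Taylor's theorem, for each $\theta$ we have $g(\theta)=\tfrac12 f''(\eta)$ for some $\eta$ between $\theta$ and $m_n$. Therefore
\[
\tfrac12\min_{[0,1]}f''\le g\le\tfrac12\max_{[0,1]}f'' .
\]

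\emph{Step 2: integrate against the posterior.} Integrating $f(\theta)=f(m_n)+f'(m_n)(\theta-m_n)+g(\theta)(\theta-m_n)^2$ against $\Pi(\cdot\mid\mathcal{F}_n)$, the linear term vanishes because $\Ebb[\theta\mid\mathcal{F}_n]=m_n$. This gives
\[
\Ebb[(1-\theta)^k\mid\mathcal{F}_n]-(1-m_n)^k=\Ebb\bigl[g(\theta)(\theta-m_n)^2\mid\mathcal{F}_n\bigr].
\]
If $\sigma_n^2=0$, both sides are zero and any $\xi_n$ works. If $\sigma_n^2>0$, define the weighted average
\[
c=\frac{\Ebb\bigl[g(\theta)(\theta-m_n)^2\mid\mathcal{F}_n\bigr]}{\sigma_n^2}.
\]
By Step 1, $c$ lies in $[\tfrac12\min f'',\tfrac12\max f'']$. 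The intermediate value theorem applied to the continuous $f''$ on $[0,1]$ then yields $\xi_n\in[0,1]$ with $c=\tfrac12 f''(\xi_n)=\tfrac{k(k-1)}{2}(1-\xi_n)^{k-2}$, which is \eqref{eq:quantitative-bound}.

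\emph{Step 3: the two-sided bound \eqref{eq:quantitative-bounds}.} The upper bound follows immediately from $(1-\xi_n)^{k-2}\le1$.

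The strict lower bound is the delicate point, for two reasons. First, it genuinely requires a non-degenerate posterior: when $\sigma_n^2=0$ there is equality, so I would state the hypothesis $\sigma_n^2>0$ explicitly, consistent with the non-degeneracy assumption elsewhere in the paper. Second, even with $\sigma_n^2>0$, the factor $(1-\xi_n)^{k-2}$ could in principle vanish at $\xi_n=1$ when $k>2$, so positivity cannot be read off the formula. I would therefore argue positivity directly, as in Lemma~\ref{lem:non-id}. Strict convexity of $f$ gives $g(\theta)>0$ for every $\theta\ne m_n$, and when $\sigma_n^2>0$ the set $\{\theta\ne m_n\}$ has positive posterior mass. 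Hence $\Ebb[g(\theta)(\theta-m_n)^2\mid\mathcal{F}_n]>0$ by strict Jensen, and this also shows that the $\xi_n$ found in Step 2 satisfies $\xi_n<1$.
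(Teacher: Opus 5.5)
Your proposal is correct and follows essentially the same route as the paper: a second-order Taylor expansion of $(1-\theta)^k$ about $m_n$, the linear term killed by the mean constraint, and a mean-value step against the weight $(\theta-m_n)^2\,\mathrm{d}\Pi(\theta\mid\mathcal{F}_n)/\sigma_n^2$ to produce a single $\xi_n$. Your continuous quotient $g$ cleanly avoids the measurability issue with the paper's pointwise Lagrange point $\xi(\theta)$, and you are right that the strict lower bound requires $\sigma_n^2>0$, which the statement omits and the paper's proof assumes only in its final line.
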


\begin{proof}
Set $f(\theta)=(1-\theta)^k$.  By Taylor's theorem with Lagrange
remainder, for each $\theta\in[0,1]$ there exists
$\xi(\theta)\in[\min(\theta,m_n),\,\max(\theta,m_n)]$ such that
$f(\theta)=f(m_n)+f'(m_n)(\theta-m_n)
+\tfrac12 f''(\xi(\theta))(\theta-m_n)^2$.
Take conditional expectations.  Since
$\Ebb[\theta-m_n\mid\mathcal{F}_n]=0$, the linear term vanishes:
\[
\Ebb[f(\theta)\mid\mathcal{F}_n]-f(m_n)
=\tfrac12\,\Ebb\bigl[f''(\xi(\theta))\,(\theta-m_n)^2\mid\mathcal{F}_n\bigr].
\]
Now $f''(\theta)=k(k-1)(1-\theta)^{k-2}$ is continuous and nonnegative
on $[0,1]$, with $0\le f''(\theta)\le k(k-1)$.  Since
$f''(\xi(\theta))(\theta-m_n)^2\ge 0$, the integral on the right is
nonnegative.  By the mean value theorem for integrals (applied to the
probability measure
$(\theta-m_n)^2\dx\Pi(\theta\mid\mathcal{F}_n)/\sigma_n^2$ when
$\sigma_n^2>0$), there exists $\xi_n\in[0,1]$ such that
\[
\Ebb\bigl[f''(\xi(\theta))\,(\theta-m_n)^2\mid\mathcal{F}_n\bigr]
=f''(\xi_n)\,\sigma_n^2
=k(k-1)(1-\xi_n)^{k-2}\,\sigma_n^2.
\]
This gives~\eqref{eq:quantitative-bound}.  The upper
bound~\eqref{eq:quantitative-bounds} follows from
$(1-\xi_n)^{k-2}\le 1$.  The lower bound in~\eqref{eq:quantitative-bounds}
is strict when $\sigma_n^2>0$ because $f''> 0$ on $(0,1)$.
\end{proof}

\begin{corollary}[Plug-in as Dirac]\label{cor:plugin-dirac}
The plug-in predictive
$\hat{p}_k^{\mathrm{plug}}\coloneqq(1-\theta_n)^k$ equals
$\Ebb[(1-\theta)^k\mid\mathcal{F}_n]$ under $\delta_{\theta_n}$.  For
$k\ge 2$, it systematically underestimates the run probability by at
most $\frac{k(k-1)}{2}\sigma_n^2$.  At $k=2$, the discrepancy equals
$\sigma_n^2$ exactly by~\eqref{eq:k2-identity}.
\end{corollary}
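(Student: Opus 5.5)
The corollary has three claims, and I would derive each from Proposition~\ref{prop:quantitative} and identity~\eqref{eq:k2-identity}.

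\emph{The plug-in as a Dirac.} Under $\delta_{\theta_n}$ we have $\int(1-\theta)^k\dx\delta_{\theta_n}(\theta)=(1-\theta_n)^k$, which is just the definition of integration against a point mass. By Proposition~\ref{prop:terminal}, $\theta_n$ is the posterior mean $m_n$ of $\theta_\infty$. In the Bayesian specialisation of Remark~\ref{rem:bayes-special}, $\theta_n=\Ebb[\theta\mid\mathcal{F}_n]=m_n$. So the plug-in is $(1-m_n)^k$, the Bayes predictive evaluated at a degenerate posterior with the same mean.

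\emph{The bound and its sign.} I would apply Proposition~\ref{prop:quantitative} with $m_n=\theta_n$. By~\eqref{eq:quantitative-bound}, the difference $\Ebb[(1-\theta)^k\mid\mathcal{F}_n]-(1-\theta_n)^k$ equals $\tfrac{k(k-1)}{2}(1-\xi_n)^{k-2}\sigma_n^2$. This is nonnegative because $\xi_n\in[0,1]$, so the plug-in never exceeds the Bayes run probability and therefore underestimates it. Using $(1-\xi_n)^{k-2}\le 1$ for $k\ge2$ bounds the gap by $\tfrac{k(k-1)}{2}\sigma_n^2$.

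\emph{Strictness.} I would phrase ``systematically underestimates'' as: the gap is $\ge 0$ always, and $>0$ whenever $\sigma_n^2>0$. The strict case comes from Jensen's inequality with the strict convexity of $(1-\theta)^k$ used in Lemma~\ref{lem:non-id}. One caveat: the displayed strict lower bound in~\eqref{eq:quantitative-bounds} is stated without qualification, but it fails when $\sigma_n^2=0$, so I would state that condition explicitly.

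\emph{The case $k=2$.} Subtract $(1-m_n)^2$ from~\eqref{eq:k2-identity}; this gives a discrepancy of exactly $\sigma_n^2$. It matches the bound with $\tfrac{k(k-1)}{2}=1$ and $(1-\xi_n)^0=1$, so the bound is attained.

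\emph{Main obstacle.} The only real subtlety is interpretation. In the general martingale-posterior setting, $\sigma_n^2$ should be read as $\Var(\theta_\infty\mid\mathcal{F}_n)$, and the ``Bayes'' predictive as the conditional expectation under the law of $\theta_\infty$. With that reading, the same Taylor argument applies verbatim, since it uses only $\Ebb[\theta_\infty\mid\mathcal{F}_n]=\theta_n$.
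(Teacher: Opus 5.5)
Your proposal is correct and follows the paper's (implicit) argument: the corollary is read off directly from Proposition~\ref{prop:quantitative} with $m_n=\theta_n$, together with identity~\eqref{eq:k2-identity} at $k=2$. Your caveat is also right: the strict inequality in~\eqref{eq:quantitative-bounds} holds only when $\sigma_n^2>0$, which is exactly what the paper's own proof of that proposition establishes.
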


\begin{remark}[Strength of non-identification]\label{rem:strength}
The non-identification in Lemma~\ref{lem:non-id} is not a pathological
boundary phenomenon.  For any $m\in(0,1)$ and any $k\ge 2$, as the
posterior ranges over all distributions with mean~$m$, the $k$-step
predictive $\Ebb[(1-\theta)^k\mid\mathcal{F}_n]$ takes all values in
the interval $\bigl[(1-m)^k,\;1-m\bigr]$.  The lower bound
$(1-m)^k$ is attained by $\delta_m$ (the plug-in value, via Jensen's
inequality and strict convexity of $(1-\theta)^k$).  The upper bound
$1-m$ is attained by the two-point distribution
$(1-m)\delta_0+m\delta_1$, which has mean~$m$ and gives
$\Ebb[(1-\theta)^k]=(1-m)\cdot 1^k+m\cdot 0^k=1-m$.
At $k=2$ with $m=1/2$, the predictive ranges from $1/4$
(degenerate) to $1/2$ (endpoint distribution), a factor of two.
\end{remark}

\begin{figure}[htbp]
\centering
\includegraphics[width=\textwidth]{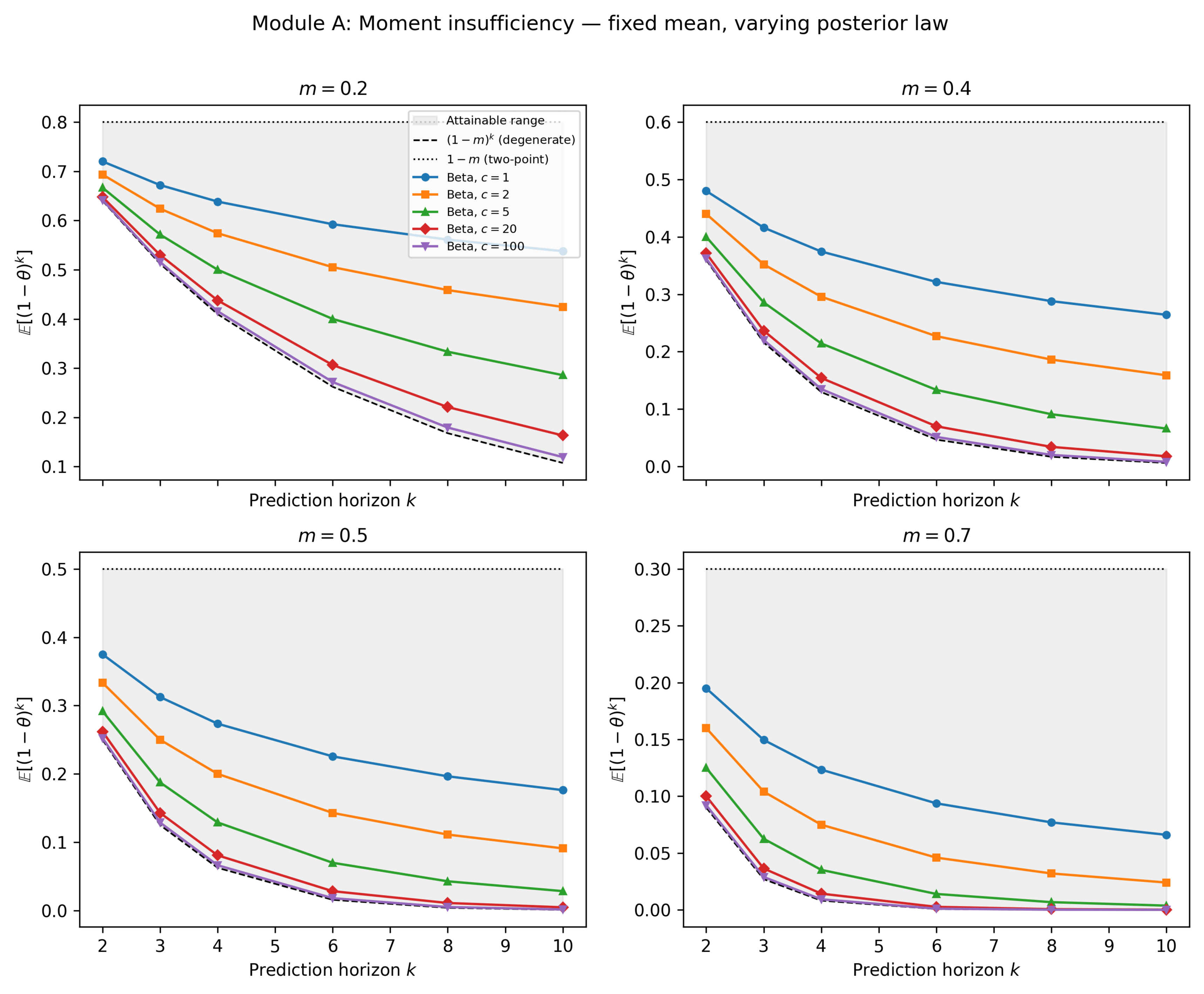}
\caption{Moment insufficiency: for fixed posterior mean $m$, the $k$-step
predictive $\Ebb[(1-\theta)^k]$ ranges over the shaded region
$\bigl[(1-m)^k,\;1-m\bigr]$
(Remark~\ref{rem:strength}).  Dashed: degenerate
lower bound $(1-m)^k$ (plug-in).  Dotted: two-point upper bound $1-m$.
Curves show Beta$(cm,\,c(1-m))$ posteriors at concentrations
$c\in\{1,2,5,20,100\}$.  Higher concentration drives the predictive
toward the degenerate plug-in; lower concentration spreads posterior
mass and increases the run probability.  All values are computed exactly
via $\Ebb[(1-\theta)^k]=(b)_k/(a+b)_k$.}
\label{fig:moment-insuff}
\end{figure}

\begin{remark}[Scope]\label{rem:scope}
Theorem~\ref{thm:insuff} does not assert that the martingale posterior
framework is incompatible with Bayesian inference.  Every Bayesian
posterior mean is a martingale (Remark~\ref{rem:bayes-special}).  The
theorem asserts that condition~\eqref{eq:mp-condition} alone does not
determine the full posterior.  A specific likelihood, a named prior, or
explicit higher-moment constraints are needed for multi-step predictive
coherence (Definition~\ref{def:pred-coherence}).
\end{remark}

\section{Decision-theoretic consequence}\label{sec:decision}

The insufficiency theorem has an immediate decision-theoretic corollary:
under any strictly proper scoring rule, the plug-in predictive is
inadmissible.  Let $\delta$ be a predictive rule producing
$\hat{P}_\delta(\cdot\mid\mathcal{F}_n)$ on $\{0,1\}^k$.

\begin{definition}\label{def:scoring}
A \emph{scoring rule} $S(\hat{P},x)$ assigns a loss to predictive
$\hat{P}$ when outcome $x$ is realized.  It is \emph{strictly proper} if
$\Ebb_P[S(\hat{P},X)]$ is uniquely minimized at $\hat{P}=P$.
The \emph{risk} is
$R(\delta,\theta)=\Ebb_\theta[S(\hat{P}_\delta,X_{n+1:n+k})]$.
A rule is \emph{admissible} if no rule uniformly dominates it in risk.
\end{definition}

The log score $S(\hat{P},x)=-\log\hat{P}(x)$ and Brier score
$S(\hat{P},x)=\sum_y(\ind[y=x]-\hat{P}(y))^2$ are both strictly
proper~\citep{gneiting2007strictly}.

\subsection{Complete class theorem}

The following result is classical and included for completeness;
see Blackwell and Girshick~(\citeyear{blackwell1954theory}) and
Wald~(\citeyear{wald1950statistical}).

\begin{theorem}[Blackwell--Girshick]\label{thm:complete-class}
Let $S$ be a strictly proper scoring rule with parameter space $[0,1]$
and risk continuous in~$\theta$.  Then:
\begin{enumerate}[label=(\alph*),nosep]
\item Every Bayes rule is admissible.
\item Every admissible rule is Bayes or a limit of Bayes rules.
\end{enumerate}
The unique Bayes-optimal predictive under prior $\Pi$ is the posterior
predictive $\hat{P}^{\mathrm{Bayes}}(x\mid\mathcal{F}_n)=
\Ebb[\theta^s(1-\theta)^{k-s}\mid\mathcal{F}_n]$.
\end{theorem}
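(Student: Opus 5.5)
The plan has three parts: the Bayes-optimality claim, then part~(a), then part~(b). The last part carries almost all the difficulty.

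\emph{Bayes optimality.} I would first establish the final sentence, since (a) depends on it. Fix $n$ and a data history. For any rule $\delta$, the Bayes risk is $\int R(\delta,\theta)\dx\Pi(\theta)$. By the tower property, this equals $\Ebb\bigl[\Ebb[S(\hat P_\delta,X_{n+1:n+k})\mid\mathcal{F}_n]\bigr]$. Conditionally on $\mathcal{F}_n$, the law of $X_{n+1:n+k}$ is $P^\ast(x)=\Ebb[\theta^s(1-\theta)^{k-s}\mid\mathcal{F}_n]$ by Theorem~\ref{thm:kreps}. Strict propriety (Definition~\ref{def:scoring}) then says the inner expectation is uniquely minimised at $\hat P_\delta=P^\ast$, pointwise in the history. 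Uniqueness holds up to histories of zero marginal probability. With a finite sample space $\{0,1\}^n$ and $\Pi$ having full support in the relevant sense, every history has positive marginal probability, so the minimiser is unique as a rule.

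\emph{Part (a).} I would argue by contradiction. Suppose $\delta'$ dominates the Bayes rule $\delta^\ast$: that is, $R(\delta',\theta)\le R(\delta^\ast,\theta)$ for all $\theta$, with strict inequality somewhere. Integrating against $\Pi$ gives Bayes risk of $\delta'$ at most that of $\delta^\ast$. By uniqueness, $\delta'=\delta^\ast$ on every history of positive marginal probability, so the two risks coincide everywhere. This contradicts strict domination.

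I would note that this step needs either uniqueness of the Bayes rule (which we have from strict propriety) or full support of $\Pi$ combined with continuity of risk. The continuity hypothesis in the statement covers the latter route. Concretely, if strict inequality held on an open set of positive $\Pi$-mass, integration would give a strict reduction in Bayes risk, which is impossible.

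\emph{Part (b).} This is the main obstacle, and I would not reprove it. The parameter space $[0,1]$ is compact, the action space (probability vectors on the finite set $\{0,1\}^{n}\times\{0,1\}^k$) is compact, and risk is continuous. Under these conditions, Wald's complete class theorem applies in the form given by Blackwell--Girshick~(\citeyear{blackwell1954theory}) and Wald~(\citeyear{wald1950statistical}). I would invoke it, after verifying its hypotheses: the loss is bounded below and lower semicontinuous in the action, and the risk set is convex after allowing randomised rules. The theorem then says the risk set's lower boundary is supported by hyperplanes, which correspond to priors. Every admissible rule is therefore Bayes with respect to some prior, or a weak limit of Bayes rules.

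The delicate point is the log score, which is unbounded. There I would truncate the action set away from the simplex boundary, or cite the extended-real version of the theorem. Since the statement is flagged as classical, the proof would mainly record this verification and the reduction to Theorem~\ref{thm:kreps}.
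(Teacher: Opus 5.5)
Your proposal is correct and follows the paper's route: strict propriety plus Theorem~\ref{thm:kreps} identify the Bayes predictive, integration against $\Pi$ gives (a), and a citation of the Wald/Blackwell--Girshick complete class theorem gives (b). Your extra care in (a) makes explicit a condition the paper's one-line argument leaves unstated. You require every history to have positive marginal probability, or full support of $\Pi$ together with continuity of risk, so that uniqueness of the Bayes rule forces equal risk functions.
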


\begin{proof}
Part~(a) follows by integrating against $\Pi$: if $\delta'$ dominated
$\delta^*$, it would have lower Bayes risk, contradicting optimality.
Part~(b) follows from compactness of $[0,1]$ and continuity of risk;
see Blackwell and Girshick~(\citeyear{blackwell1954theory}).
The Bayes-optimal predictive is the unique minimiser of expected
score under strict propriety, which by Theorem~\ref{thm:kreps}
equals $\Ebb[\theta^s(1-\theta)^{k-s}\mid\mathcal{F}_n]$.
\end{proof}

\subsection{Plug-in dominance}

\begin{proposition}[Plug-in is strictly dominated]\label{prop:plugin-dominated}
Let $\Pi$ have full support on $[0,1]$ and let $S$ be any strictly
proper scoring rule.  For $k\ge 2$, the plug-in rule
$\delta^{\mathrm{plug}}$ is strictly dominated by
$\delta^{\mathrm{Bayes}}$: for every $\theta\in(0,1)$,
$R(\delta^{\mathrm{Bayes}},\theta)<R(\delta^{\mathrm{plug}},\theta)$.
\end{proposition}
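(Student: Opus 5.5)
The plan is to work directly with the frequentist risk at a fixed $\theta\in(0,1)$, as in Definition~\ref{def:scoring}. The risk $R(\delta,\theta)$ averages over both the observed block $X_{1:n}$ and the future block $X_{n+1:n+k}$, which are i.i.d.\ $\Bern(\theta)$ under $\Pbb_\theta$. Condition first on $\mathcal{F}_n$. Given the data, the future block has law $P_\theta^{(k)}$ on $\{0,1\}^k$, with $P_\theta^{(k)}(x)=\theta^s(1-\theta)^{k-s}$. For any $\mathcal{F}_n$-measurable predictive $\hat P$, the inner expectation is therefore the expected score $\Ebb_{P_\theta^{(k)}}[S(\hat P,X)]$.

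Subtracting the $\hat P$-independent term $\Ebb_{P_\theta^{(k)}}[S(P_\theta^{(k)},X)]$ turns this into the divergence $d_S(P_\theta^{(k)},\hat P)\ge 0$ induced by the scoring rule. Strict propriety makes this divergence vanish only at $\hat P=P_\theta^{(k)}$. The claim thus reduces to comparing two data-averaged divergences:
\[
\Ebb_\theta\bigl[d_S(P_\theta^{(k)},\hat P^{\mathrm{Bayes}})\bigr]
<\Ebb_\theta\bigl[d_S(P_\theta^{(k)},\hat P^{\mathrm{plug}})\bigr].
\]
Here $\hat P^{\mathrm{Bayes}}(x)=\Ebb[\vartheta^s(1-\vartheta)^{k-s}\mid\mathcal{F}_n]$ is the posterior mixture of Theorem~\ref{thm:kreps}. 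The plug-in is the product law $\hat P^{\mathrm{plug}}(x)=\theta_n^s(1-\theta_n)^{k-s}$, with $\theta_n=m_n$.

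Next I would make the two predictives explicit as functions of $S_n$ via Remark~\ref{rem:sufficiency}. Both are then finite sums over $S_n\sim\mathrm{Bin}(n,\theta)$, and both risks are continuous in $\theta$. I would first compute the Brier case by hand, since $d_S$ is then the squared Euclidean distance on $\{0,1\}^k$. Both sides become polynomials in $\theta$, and I would compare them using the moment hierarchy of Corollary~\ref{cor:moment-hierarchy} together with Proposition~\ref{prop:quantitative}. The plug-in misses each cylinder probability by a second-order term of size $O(\sigma_n^2)$, while Bayes adds the posterior spread.

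The main obstacle is that this pointwise comparison is not automatic, and at the extremes it can fail. The case $n=0$ shows this. There $\mathcal{F}_0$ is trivial, and at the single value $\theta=m_0=\Ebb_\Pi[\vartheta]$ the plug-in equals $P_\theta^{(k)}$ exactly. Its risk is then the minimum possible, so the strict inequality is false at that $\theta$ whenever $\Pi$ is non-degenerate, which holds under full support.

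I would therefore first check, using the explicit Brier polynomials for a $\BetaD$ prior, whether strict domination holds for every $\theta$ once $n\ge 1$. For $n\ge1$, randomness in $S_n$ keeps $\theta_n\ne\theta$ with positive probability. If the check fails near the points where $\theta_n$ concentrates, the statement must be weakened to one of two forms. The first is domination in Bayes risk, via Theorem~\ref{thm:complete-class}(a), combined with strict inequality on a set of $\theta$ of positive $\Pi$-measure. The second is domination conditional on $\mathcal{F}_n$ with $\theta$ drawn from the posterior. I would state explicitly which of these the argument delivers, rather than the unrestricted pointwise claim.
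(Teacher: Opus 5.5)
Your diagnosis is correct, and your reduction to $\Ebb_\theta[d_S(P_\theta^{(k)},\hat P)]$ isolates exactly where the paper's proof breaks. The paper argues as follows. $\hat P^{\mathrm{Bayes}}$ is ``the conditional distribution of the future block given $\mathcal{F}_n$'', so strict propriety makes it the minimiser of expected score. It then integrates over $\mathcal{F}_n$ under $\Pbb_\theta$. But $\hat P^{\mathrm{Bayes}}$ is that conditional law only under the mixture $\Pbb=\int\Pbb_\vartheta\,\Pi(\dx\vartheta)$. Under $\Pbb_\theta$, the law that defines $R(\cdot,\theta)$, the conditional law is $P_\theta^{(k)}$, as you wrote. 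Strict propriety then favours $P_\theta^{(k)}$, not the posterior mixture. What the paper's argument actually proves is a strict inequality in posterior expected score given $\mathcal{F}_n$, hence in $\Pi$-integrated risk. That is not the pointwise-in-$\theta$ claim of the statement.

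Your $n=0$ counterexample is valid. The check you propose for $n\ge 1$ also comes out negative, so no comparison of Brier polynomials via Proposition~\ref{prop:quantitative} can rescue the pointwise claim. Take $n=1$, $k=2$, $\Pi=\BetaD(1,1)$ (full support), the Brier score on $\{0,1\}^2$ (so $d_S$ is squared Euclidean distance), and $\theta=1/3$. Listing patterns as $00,01,10,11$, we have $P_{1/3}^{(2)}=(4/9,2/9,2/9,1/9)$.
\begin{itemize}
\item If $S_1=0$ (probability $2/3$), the plug-in uses $m_1=1/3$ and coincides with $P_{1/3}^{(2)}$. The $\BetaD(1,2)$ predictive is $(1/2,1/6,1/6,1/6)$, at squared distance $1/81$.
\item If $S_1=1$ (probability $1/3$), the plug-in $(1/9,2/9,2/9,4/9)$ is at distance $2/9$. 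The $\BetaD(2,1)$ predictive $(1/6,1/6,1/6,1/2)$ is at distance $19/81$.
\end{itemize}
The excess risks are therefore $6/81$ for plug-in and $7/81$ for Bayes, so $R(\delta^{\mathrm{Bayes}},1/3)>R(\delta^{\mathrm{plug}},1/3)$ despite full support and $n\ge 1$. The fact that $\theta_1\ne\theta$ with positive probability does not help: the Bayes predictive is also off, and by more.

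So the right move is the one you propose: state a weakened result. Both of your candidate weakenings are true. With full support every posterior is non-degenerate, so for $k\ge 2$ the two predictives differ for every value of $S_n$ (strict Jensen on the run probability). Strict propriety then gives Bayes a strictly smaller posterior expected score given each $\mathcal{F}_n$. Averaging over the marginal law of $S_n$ gives $\int R(\delta^{\mathrm{Bayes}},\theta)\,\Pi(\dx\theta)<\int R(\delta^{\mathrm{plug}},\theta)\,\Pi(\dx\theta)$. Hence the pointwise strict inequality holds on a set of $\theta$ of positive $\Pi$-measure. Note that this shows only that the plug-in is not Bayes for $\Pi$, not that it is dominated. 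Corollary~\ref{cor:inadmissible}, which rests on the pointwise claim, therefore inherits the same gap.
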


\begin{proof}
We show that plug-in and Bayes predictives differ almost
surely, then apply strict propriety.

Fix $X_{1:n}$ with $\Var(\theta\mid\mathcal{F}_n)>0$ (which holds
$\Pbb_\theta$-a.s.\ since $\Pi$ has full support).  The Bayes predictive
is $\hat{P}^{\mathrm{Bayes}}(x)=\Ebb[\theta^s(1-\theta)^{k-s}\mid\mathcal{F}_n]$;
the plug-in is $\hat{P}^{\mathrm{plug}}(x)=\theta_n^s(1-\theta_n)^{k-s}$.
By Corollary~\ref{cor:plugin-dirac}, for $k\ge 2$ these differ on a
set of positive probability whenever the posterior is non-degenerate.
Since the Bayes predictive equals the conditional distribution of the
future block given $\mathcal{F}_n$, strict propriety implies that
$\hat{P}^{\mathrm{Bayes}}$ strictly minimises expected score.  Hence the
plug-in's expected score is strictly higher.  Integrating over
$\mathcal{F}_n$ under $\Pbb_\theta$ preserves the strict inequality.
\end{proof}

The dominance is strict whenever $\Var(\theta\mid\mathcal{F}_n)>0$.

\begin{corollary}[Inadmissibility]\label{cor:inadmissible}
The plug-in rule is inadmissible for $k$-step prediction with $k\ge 2$
under any strictly proper scoring rule.
\end{corollary}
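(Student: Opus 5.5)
The plan is to derive Corollary~\ref{cor:inadmissible} directly from Proposition~\ref{prop:plugin-dominated} and the definition of admissibility in Definition~\ref{def:scoring}. A rule is admissible only if no competing rule uniformly dominates it in risk. It therefore suffices to exhibit one rule whose risk is no larger than that of $\delta^{\mathrm{plug}}$ at every $\theta\in[0,1]$ and strictly smaller somewhere. The natural candidate is $\delta^{\mathrm{Bayes}}$ under a prior $\Pi$ with full support on $[0,1]$; the concrete choice is the uniform $\BetaD(1,1)$ prior, or the Jeffreys prior used later in Section~\ref{sec:hill}.

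The steps are as follows.
\begin{enumerate}[label=(\arabic*),nosep]
\item Fix $k\ge 2$, a strictly proper $S$, and a full-support prior $\Pi$. Under such a prior the posterior $\Pi(\cdot\mid\mathcal{F}_n)$ is absolutely continuous for every data value, so $\Var(\theta\mid\mathcal{F}_n)>0$ surely, not merely almost surely.
\item By Corollary~\ref{cor:plugin-dirac} and the strict lower bound in \eqref{eq:quantitative-bounds}, the plug-in block probability $(1-\theta_n)^k$ is strictly below the Bayes run probability. The two predictives on $\{0,1\}^k$ therefore differ for every realisation of $X_{1:n}$.
\item Invoke Proposition~\ref{prop:plugin-dominated} to obtain $R(\delta^{\mathrm{Bayes}},\theta)<R(\delta^{\mathrm{plug}},\theta)$ for all $\theta\in(0,1)$.
\item Conclude that $\delta^{\mathrm{plug}}$ is dominated, hence inadmissible.
\end{enumerate}

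The main obstacle is the endpoints $\theta\in\{0,1\}$ and, more fundamentally, the logic of Proposition~\ref{prop:plugin-dominated} itself. Strict propriety gives that the Bayes predictive minimises expected score under the \emph{Bayesian} conditional law of the future block given $\mathcal{F}_n$. The frequentist risk $R(\cdot,\theta)$, however, uses the true $\Bern(\theta)$ block law, and there the minimiser is $\theta^s(1-\theta)^{k-s}$, not the Bayes predictive. Pointwise dominance at every $\theta$ is therefore not automatic.

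If that pointwise claim proves too strong to justify, I would fall back on an integrated argument. Strict propriety applied conditionally on $\mathcal{F}_n$, together with step (2), gives
\[
\int R(\delta^{\mathrm{Bayes}},\theta)\dx\Pi(\theta)<\int R(\delta^{\mathrm{plug}},\theta)\dx\Pi(\theta).
\]
I would then combine this with Theorem~\ref{thm:complete-class}(b): admissible rules are Bayes rules or limits of Bayes rules. The aim is to show that the plug-in rule is neither. A Bayes rule for $k$-step prediction must be a mixture $\int\theta^s(1-\theta)^{k-s}\dx\Pi(\theta\mid\mathcal{F}_n)$, and such a mixture has run probability at least the square of its one-step marginal, by Jensen. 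A Dirac-type plug-in with $\theta_n$ fixed can only be a Bayes rule if every posterior is degenerate. That is incompatible with $\theta_n$ varying non-trivially with the data, since with $n$ fixed the posterior must come from a single prior.

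I expect this fallback to need care with limits of Bayes rules. I would handle it using continuity of risk in $\theta$ on the compact set $[0,1]$, as assumed in Theorem~\ref{thm:complete-class}.
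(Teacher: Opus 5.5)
Your steps (1)--(4) are exactly the paper's proof. The paper cites Proposition~\ref{prop:plugin-dominated} for $R(\delta^{\mathrm{Bayes}},\theta)<R(\delta^{\mathrm{plug}},\theta)$ on $(0,1)$ and then uses admissibility of the Bayes rule. Your objection is decisive, not merely cautionary: that proposition is false, so step (3) cannot be justified. Take $\Pi=\BetaD(1,1)$, $n=1$, $k=2$.
After $X_1=0$ the posterior is $\BetaD(1,2)$. The Bayes predictive for $(X_2,X_3)$ on $(00,01,10,11)$ is $(\tfrac12,\tfrac16,\tfrac16,\tfrac16)$, while the plug-in with $\theta_1=\tfrac13$ gives $(\tfrac49,\tfrac29,\tfrac29,\tfrac19)$.
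At $\theta=\tfrac12$ the true block law is uniform.
\begin{itemize}
\item The expected Brier score is a constant plus the squared distance to the truth: $\tfrac{27}{324}$ for Bayes and $\tfrac{19}{324}$ for plug-in.
\item The expected log score is $\tfrac14(\log 2+3\log 6)\approx1.517$ for Bayes and $\log\tfrac92\approx1.504$ for plug-in.
\end{itemize}
The case $X_1=1$ is the mirror image, so $R(\delta^{\mathrm{plug}},\tfrac12)<R(\delta^{\mathrm{Bayes}},\tfrac12)$ under both scores. Also, at $n=0$ the plug-in is the constant rule $\Bern(m_0)^{\otimes k}$. This is the unique Bayes rule for the prior $\delta_{m_0}$, hence admissible, and it beats $\delta^{\mathrm{Bayes}}$ at $\theta=m_0$. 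So the statement itself needs $n\ge1$, or more precisely $\theta_n$ nonconstant in the data.

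Your fallback is the route that actually works. It differs genuinely from the paper's: it proves inadmissibility non-constructively, by placing the plug-in outside the complete class rather than naming a dominating rule. It needs two repairs.

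First, the integrated inequality under $\Pi$ only shows the plug-in is not Bayes for $\Pi$; you need it to be Bayes for \emph{no} prior $\pi$. Your degeneracy argument gives this once you say why one prior cannot produce distinct degenerate posteriors.
\begin{itemize}
\item If $\pi((0,1))>0$, every $s$ has positive marginal probability, and the posteriors $\propto\theta^s(1-\theta)^{n-s}\pi(\dx\theta)$ are mutually absolutely continuous on $(0,1)$. They therefore cannot be $\delta_{t_s}$ and $\delta_{t_{s'}}$ with distinct interior $t_s=\theta_n(s)$.
\item If $\pi$ charges only $\{0,1\}$, strict propriety forces a point-mass action at $s=0$ or $s=n$, which the plug-in never takes.
\end{itemize}

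Second, continuity of risk by itself does not exclude the plug-in being a limit of Bayes rules. The clean fix is the compact-parameter form of the complete class theorem. With randomised rules, the differences $R(\cdot,\delta)-R(\cdot,\delta^{\mathrm{plug}})$ form a convex subset of $C[0,1]$. If the plug-in were admissible, this set would miss the open cone $\{f<0\}$. Hahn--Banach plus Riesz representation would then give a prior for which the plug-in is Bayes, contradicting the first repair, so limits never arise. This needs finite continuous risks: for example a bounded score, or restriction to rules with finite log-score risk.

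Alternatively, argue directly. For $k\ge2$, convergence of block predictives to $\Bern(t_s)^{\otimes k}$ forces the posterior variances to zero. Likelihood ratios are bounded on compact subsets of $(0,1)$, so posteriors from a single prior cannot concentrate at two distinct interior points.
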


\begin{proof}
By Proposition~\ref{prop:plugin-dominated}, for every
$\theta\in(0,1)$, the Bayes predictive achieves strictly lower risk
than the plug-in predictive under any strictly proper scoring rule.
Since the Bayes rule uniformly dominates the plug-in and is itself
admissible (Theorem~\ref{thm:complete-class}(a)), the plug-in is
inadmissible.
\end{proof}

\begin{corollary}[Mean-only rules]\label{cor:mean-only-class}
Let $k\ge 2$.  Any predictive rule depending on $\mathcal{F}_n$ only
through $\theta_n$ is either a Bayes rule under a degenerate prior, or
outside the Bayes class and hence dominable.
\end{corollary}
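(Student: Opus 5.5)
Let $g:[0,1]\to[0,1]^{\{0,1\}^k}$ denote a mean-only rule, so the rule outputs $\hat P(\cdot\mid\mathcal{F}_n)=g(\theta_n)$ on $\{0,1\}^k$. The plan is a dichotomy on whether $g$ can be realized as a posterior predictive. By Theorem~\ref{thm:complete-class}, the Bayes rule under a prior $\Pi$ is $x\mapsto\Ebb_\Pi[\theta^s(1-\theta)^{k-s}\mid\mathcal{F}_n]$. So $\delta$ is Bayes under some $\Pi$ iff $g(\theta_n)$ coincides with this integral for $\Pbb$-a.e.\ history.

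\textbf{Step 1: the degenerate case.} If $\Pi(\cdot\mid\mathcal{F}_n)$ is a point mass (e.g.\ $\Pi=\delta_{p}$, or more generally any prior whose posteriors are Dirac), then by Theorem~\ref{thm:kreps} the Bayes predictive is $\theta_n^s(1-\theta_n)^{k-s}$. This depends on the data only through $\theta_n$, so the mean-only rule $g(t)(x)=t^s(1-t)^{k-s}$ is Bayes under a degenerate prior. This is the first branch.

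\textbf{Step 2: non-degenerate priors cannot yield arbitrary mean-only rules.} Suppose $\delta$ is Bayes under a prior $\Pi$ whose posterior is non-degenerate on a set of positive probability. Consider the $k=2$ run coordinate. By~\eqref{eq:k2-identity}, the Bayes value is $(1-\theta_n)^2+\sigma_n^2$, so being mean-only forces $\sigma_n^2$ to be a function of $\theta_n$ alone. For $k\ge 3$, use~\eqref{eq:moment-expansion} and M\"obius inversion (Proposition~\ref{prop:mobius}) on all patterns: all posterior moments up to order $k$ must be functions of $\theta_n$.

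This is exactly where the dichotomy as worded is weakest, and I expect it to be the main obstacle. A conjugate Beta prior gives $\sigma_n^2=m_n(1-m_n)/(a+b+n+1)$, which is a function of $\theta_n$ once $n$ is fixed. So a non-degenerate Bayes rule can be mean-only at a fixed $n$. I would therefore read "depending only through $\theta_n$" as meaning that the same map $g$ is used for all $n$, and Step~2 would then show this is incompatible with non-degeneracy: comparing $n$ and $n+1$ along paths with matching $\theta_n$, the variance function must be independent of $n$. But variances contract to zero a.s., so $\sigma^2\equiv 0$ on the support, contradicting non-degeneracy. If this reading fails, I would fall back to the weaker statement that "either a degenerate-prior Bayes rule, or else (if not Bayes at all) dominable," and simply treat any mean-only Bayes rule under a non-degenerate prior as absorbed into the Bayes class.

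\textbf{Step 3: dominability outside the Bayes class.} If $\delta$ is not Bayes (nor a limit of Bayes rules), Theorem~\ref{thm:complete-class}(b) shows it is inadmissible, i.e.\ dominable. For the limit-of-Bayes boundary case, I would argue directly as in Proposition~\ref{prop:plugin-dominated}. Fix a full-support prior; the Bayes predictive differs from $g(\theta_n)$ with positive probability by Theorem~\ref{thm:insuff}. Strict propriety then gives strictly lower conditional expected score, which integrates to strict Bayes-risk improvement, and hence dominance in the sense used there.
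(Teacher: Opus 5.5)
\textbf{There is a genuine gap, and it is the one you flagged in Step~2, but it cannot be closed.} The statement is false even under your ``same $g$ for every $n$'' reading.

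Take $\Pi=\lambda\delta_a+(1-\lambda)\delta_b$ with $0<a<b<1$ and $\lambda\in(0,1)$. Every posterior is $p_n\delta_b+(1-p_n)\delta_a$ with $p_n\in(0,1)$. Hence $\theta_n=a+(b-a)p_n\in(a,b)$, and the Bayes predictive is $g(\theta_n)$ for the fixed map
\[
g(t)(x)=\frac{t-a}{b-a}\,b^s(1-b)^{k-s}+\frac{b-t}{b-a}\,a^s(1-a)^{k-s},
\]
while $\sigma_n^2=(\theta_n-a)(b-\theta_n)>0$ for every $n$. All histories have positive marginal probability. So under a strictly proper score with finite risk (e.g.\ the log score), this is the unique Bayes rule under $\Pi$, hence admissible. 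It is mean-only, non-degenerate, Bayes, and not dominable.

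The step that breaks is ``$\sigma_n^2=v(\theta_n)\to0$ a.s., so $v\equiv0$ on the support, contradicting non-degeneracy.'' Here $v(t)=(t-a)(b-t)$ does vanish on the support $\{a,b\}$. But the posterior means never enter the support, and $v>0$ on $(a,b)$. For a general measurable $g$ you could not even pass to the limit in $v(\theta_n)$.

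Your fixed-$n$ Beta observation is also a genuine counterexample. The paper's own Jeffreys rule of Section~\ref{sec:hill} is one, since $\theta_n=(S_n+\tfrac12)/(n+1)$ determines $S_n$ once $n$ is fixed. Your fallback (``Bayes or dominable'') is a different, weaker claim, not the corollary.

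\textbf{Comparison with the paper, and the dominance step.} The paper's proof goes through Theorem~\ref{thm:insuff}: two posteriors $\nu_1,\nu_2$ with a common mean have different Bayes predictives, so one value $\hat P_\delta(m_n)$ cannot be optimal under both. That shows only that no mean-only $g$ is Bayes against \emph{every} posterior with a given mean; in effect, for each $g$ some prior makes it non-Bayes. It says nothing about a prior whose realized posteriors form a family indexed by their mean. That is exactly the distinction you drew, and the examples above exploit it. The prior-dependent statement is what is actually provable.

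Step~3 also does not deliver dominance. When $\delta$ is not Bayes under $\Pi$, strict propriety gives
\[
\int R(\delta^{\mathrm{Bayes}},\theta)\dx\Pi(\theta)<\int R(\delta,\theta)\dx\Pi(\theta),
\]
because the Bayes predictive is optimal against the mixture law, not against each $\Pbb_\theta$. Dominance needs the inequality at every $\theta$, and it can fail. Under the Jeffreys prior with $n=1$, $k=2$ and the log score, $R(\delta^{\mathrm{plug}},\tfrac12)-R(\delta^{\mathrm{Bayes}},\tfrac12)=\tfrac14\log(80/81)<0$. So the integration step you borrow from Proposition~\ref{prop:plugin-dominated} yields only a Bayes-risk comparison. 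Likewise, ``outside the Bayes class'' does not by itself imply ``dominable'': Theorem~\ref{thm:complete-class}(b) leaves room for admissible limits of Bayes rules.
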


\begin{proof}
By Theorem~\ref{thm:insuff}, there exist posteriors $\nu_1,\nu_2$ with
$\int\theta\dx\nu_1=\int\theta\dx\nu_2=m_n$ but
$\int(1-\theta)^k\dx\nu_1\ne\int(1-\theta)^k\dx\nu_2$ for $k\ge 2$.
A mean-only rule $\delta$ maps both posteriors to the same predictive
$\hat{P}_\delta(m_n)$.  By strict propriety, the unique Bayes-optimal
predictive under $\nu_i$ is
$\hat{P}^*_i(x)=\int\theta^s(1-\theta)^{k-s}\dx\nu_i$, and
$\hat{P}^*_1\ne\hat{P}^*_2$.  Since $\delta$ assigns the same
$\hat{P}$ to both, it cannot equal both $\hat{P}^*_1$ and $\hat{P}^*_2$;
hence $\delta$ is suboptimal under at least one posterior and is
outside the Bayes class (unless $\nu_1$ or $\nu_2$ is degenerate).
\end{proof}

\begin{remark}[Log-score risk gap]\label{rem:risk-gap-log}
At $k=2$, the Bayes predictive assigns run probability
$p_B=\Ebb[(1-\theta)^2\mid\mathcal{F}_n]=(1-m_n)^2+\sigma_n^2$ and
the plug-in assigns $p_P=(1-m_n)^2$.  Since $p_B\ne p_P$ whenever
$\sigma_n^2>0$, the KL divergence
$D_{\mathrm{KL}}(\text{Bayes}\|\text{plug-in})$ is strictly positive,
so the log-score risk gap is strictly positive whenever the posterior is
non-degenerate.
\end{remark}

\begin{example}[Numerical risk gap]\label{ex:risk-gap-numerical}
Continue the setting of Table~\ref{tab:comparison}: $n=5$, $S_5=2$,
Jeffreys prior, so $m_5=5/12$ and $\sigma_5^2\approx 0.035$.

At $k=2$, the Bayes predictive for the all-zeros run is
$p_B=(b_5)_2/(N_5)_2=0.375$ and the plug-in is
$p_P=(1-m_5)^2\approx 0.340$.  The log-score risk gap for predicting
the binary event $Y=\ind[X_6=X_7=0]$ is
\begin{align*}
\Delta R_{\log}
&=\Ebb_Y[-\log p_P(Y)]-\Ebb_Y[-\log p_B(Y)]\\
&=p_B\log\frac{p_B}{p_P}+(1-p_B)\log\frac{1-p_B}{1-p_P}\\
&\approx 0.375\log\frac{0.375}{0.340}+0.625\log\frac{0.625}{0.660}
\approx 0.0026.
\end{align*}
This KL divergence, while small in absolute terms, is strictly positive
and cannot be eliminated by any mean-only adjustment.  At $k=4$ the
divergence grows to approximately $0.021$, an order of magnitude
larger.
\end{example}

\begin{remark}[Brier score risk gap]\label{rem:risk-gap-brier}
Under the Brier score $S(\hat{p},y)=(\hat{p}-y)^2$ for predicting the
binary event $Y=\ind[X_{n+1}=\cdots=X_{n+k}=0]$, the risk gap between
plug-in and Bayes is $(p_B-p_P)^2$ to leading order.  At $k=2$, this
equals $\sigma_n^4$.  The qualitative conclusion (plug-in is strictly
dominated whenever $\sigma_n^2>0$) holds under all strictly proper
scoring rules.
\end{remark}

Under the log score, the plug-in--Bayes gap equals
$D_{\mathrm{KL}}(\text{Bayes}\|\text{plug-in})$~\citep{cover2006elements},
connecting predictive regret to KL divergence.

\begin{figure}[htbp]
\centering
\includegraphics[width=\textwidth]{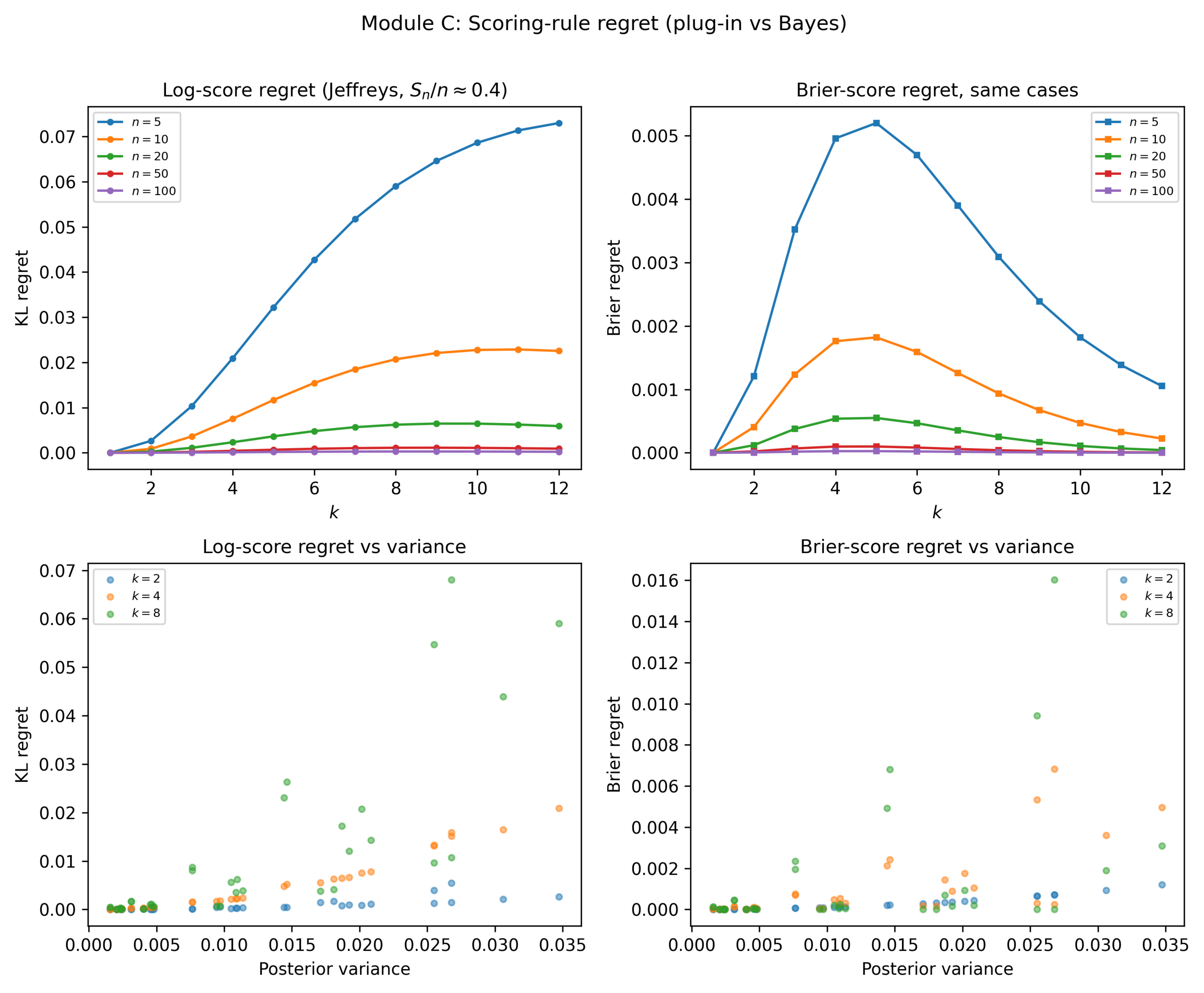}
\caption{Scoring-rule regret (plug-in minus Bayes).
\textit{Top left}: KL regret
$D_{\mathrm{KL}}(\hat{P}^{\mathrm{Bayes}}\|\hat{P}^{\mathrm{plug}})$
vs.\ horizon $k$ at $S_n/n\approx 0.4$ under the Jeffreys prior, for
sample sizes $n\in\{5,10,20,50,100\}$.  Regret grows with $k$ (more
moments neglected) and shrinks with $n$ (posterior concentrates).
\textit{Top right}: Brier regret $(p_B-p_P)^2$, same cases.
\textit{Bottom row}: regret vs.\ posterior variance across all
prior/sample-size combinations, showing the $O(\sigma_n^4)$ Brier
scaling (Remark~\ref{rem:risk-gap-brier}) and the strictly positive
KL gap whenever $\sigma_n^2>0$
(Remark~\ref{rem:risk-gap-log}).  All values are exact.}
\label{fig:scoring-regret}
\end{figure}

\section{Hill's \texorpdfstring{$A_{(n)}$}{A(n)} as a positive example}\label{sec:hill}

Hill's $A_{(n)}$~\citep{hill1968posterior,hill1988deFinetti}, in the
Bernoulli case, is the posterior predictive under the Jeffreys prior
$\BetaD(\tfrac12,\tfrac12)$.

\begin{proposition}[Hill/Jeffreys one-step predictive]\label{prop:hill-onestep}
Let $\theta\sim\BetaD(\tfrac12,\tfrac12)$ and
$X_i\mid\theta\stackrel{\mathrm{i.i.d.}}{\sim}\Bern(\theta)$.
The posterior is $\BetaD(\tfrac12+S_n,\,\tfrac12+n-S_n)$, and
$\Pbb(X_{n+1}=1\mid\mathcal{F}_n)=(S_n+\tfrac12)/(n+1)$.
\end{proposition}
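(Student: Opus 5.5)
The plan is to use Beta--Bernoulli conjugacy via the general posterior formula~\eqref{eq:posterior-general}, and then obtain the predictive as the posterior mean through Theorem~\ref{thm:kreps} with $k=1$.

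\emph{Step 1: the posterior.} Take the Jeffreys density $\pi(\theta)\propto\theta^{-1/2}(1-\theta)^{-1/2}$ on $(0,1)$. Substituting into~\eqref{eq:posterior-general}, the unnormalised posterior density is
\[
\theta^{S_n}(1-\theta)^{n-S_n}\,\theta^{-1/2}(1-\theta)^{-1/2}
=\theta^{S_n+\frac12-1}(1-\theta)^{n-S_n+\frac12-1}.
\]
This is the kernel of $\BetaD(\tfrac12+S_n,\tfrac12+n-S_n)$. The normalising constant is the Beta function $B(\tfrac12+S_n,\tfrac12+n-S_n)$, which is finite because both arguments are at least $\tfrac12>0$. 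So the identification is exact, and Remark~\ref{rem:sufficiency} confirms the posterior depends only on $(n,S_n)$.

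\emph{Step 2: the one-step predictive.} By Theorem~\ref{thm:kreps} with $k=1$ and pattern $x_1=1$,
\[
\Pbb(X_{n+1}=1\mid\mathcal{F}_n)=\Ebb[\theta\mid\mathcal{F}_n].
\]
For $\BetaD(a,b)$ the mean is $a/(a+b)$, since $B(a+1,b)/B(a,b)=a/(a+b)$ by $\Gamma(z+1)=z\Gamma(z)$. Here $a+b=n+1$, so the mean is $(S_n+\tfrac12)/(n+1)$, as claimed.

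There is no real obstacle. The only point needing care is that the Jeffreys prior is proper, since $B(\tfrac12,\tfrac12)=\pi$. This means de~Finetti's Theorem~\ref{thm:definetti} applies with $\Pi=\BetaD(\tfrac12,\tfrac12)$, and the conditional-independence step in Theorem~\ref{thm:kreps} is legitimate.
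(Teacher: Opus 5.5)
Your proposal is correct and follows the same route as the paper: identify the posterior as $\BetaD(a_n,b_n)$ with $a_n+b_n=n+1$, then use Theorem~\ref{thm:kreps} with $k=1$ and the Beta mean $a/(a+b)$. The only difference is that you write out the conjugacy calculation and the properness check that the paper takes for granted.
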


\begin{proof}
The posterior mean of $\BetaD(a,b)$ is $a/(a+b)$.  With
$a=\tfrac12+S_n$ and $b=\tfrac12+n-S_n$, the result follows by
Theorem~\ref{thm:kreps} with $k=1$.
\end{proof}

\begin{proposition}[$k$-step run probabilities]\label{prop:hill-kstep}
Under the same model, with $a_n=S_n+\tfrac12$, $b_n=n-S_n+\tfrac12$,
$N_n=a_n+b_n=n+1$, and $(x)_k=x(x+1)\cdots(x+k-1)$ the rising factorial,
\[
\Pbb(X_{n+1}=\cdots=X_{n+k}=0\mid\mathcal{F}_n)
=\prod_{j=0}^{k-1}\frac{b_n+j}{N_n+j}.
\]
\end{proposition}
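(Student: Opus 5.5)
The plan is to reduce the run probability to a single Beta integral via Theorem~\ref{thm:kreps}, and then to evaluate that integral with Beta-function identities. By Proposition~\ref{prop:hill-onestep}, the posterior is $\BetaD(a_n,b_n)$ with $a_n=S_n+\tfrac12$ and $b_n=n-S_n+\tfrac12$. Both parameters are strictly positive for every $n\ge 0$ and every $0\le S_n\le n$. Equation~\eqref{eq:kreps-run} then gives
\[
\Pbb(X_{n+1}=\cdots=X_{n+k}=0\mid\mathcal{F}_n)
=\Ebb\bigl[(1-\theta)^k\mid\mathcal{F}_n\bigr]
=\frac{1}{B(a_n,b_n)}\int_0^1\theta^{a_n-1}(1-\theta)^{b_n+k-1}\dx\theta .
\]
The integral is $B(a_n,b_n+k)$, so the run probability equals the ratio $B(a_n,b_n+k)/B(a_n,b_n)$.

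Next I write each Beta function as $B(x,y)=\Gamma(x)\Gamma(y)/\Gamma(x+y)$. The factors $\Gamma(a_n)$ cancel, which leaves
\[
\frac{\Gamma(b_n+k)}{\Gamma(b_n)}\cdot\frac{\Gamma(a_n+b_n)}{\Gamma(a_n+b_n+k)}
=\frac{(b_n)_k}{(N_n)_k}.
\]
The equality uses $\Gamma(x+k)/\Gamma(x)=(x)_k$, which follows by iterating $\Gamma(x+1)=x\Gamma(x)$ $k$ times. Expanding both rising factorials gives the stated product $\prod_{j=0}^{k-1}(b_n+j)/(N_n+j)$, with $N_n=a_n+b_n=n+1$.

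As a cross-check, or as an alternative route that avoids Gamma functions, I would argue by induction on $k$ using the chain rule. Conditioning successively, $\Pbb(X_{n+j+1}=0\mid\mathcal{F}_{n+j},\,X_{n+1:n+j}=0)$ is the one-step predictive after $j$ further zeros. By Proposition~\ref{prop:hill-onestep} applied at time $n+j$, this equals $(b_n+j)/(N_n+j)$, because each zero increments $b$ and $N$ by one and leaves $a$ unchanged. Multiplying these factors for $j=0,\dots,k-1$ gives the same product.

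The argument is essentially routine. The only point needing care is the bookkeeping of the Gamma-to-rising-factorial identity, together with confirming that $a_n,b_n>0$ so that every Beta integral is finite; that positivity holds automatically here because of the $\tfrac12$ offsets.
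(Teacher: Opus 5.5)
Your proposal is correct and follows the paper's proof: Theorem~\ref{thm:kreps} reduces the run probability to $B(a_n,b_n+k)/B(a_n,b_n)$, and the Gamma-function identity simplifies this to $(b_n)_k/(N_n)_k$. Your chain-rule cross-check is also valid. After $j$ further zeros the posterior is $\BetaD(a_n,b_n+j)$, so Proposition~\ref{prop:hill-onestep} gives the factor $(b_n+j)/(N_n+j)$, and multiplying these factors reproduces the product without Gamma functions.
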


\begin{proof}
By Theorem~\ref{thm:kreps} and the Beta moment formula:
$\Ebb[(1-\theta)^k\mid\mathcal{F}_n]
=B(a_n,b_n+k)/B(a_n,b_n)=(b_n)_k/(N_n)_k$.
\end{proof}

\subsection{Full moment recovery}

Under $\BetaD(a_n,b_n)$, all posterior moments are determined:
\begin{equation}\label{eq:beta-moments}
\Ebb[\theta^j\mid\mathcal{F}_n]
=\frac{(a_n)_j}{(N_n)_j}
=\prod_{i=0}^{j-1}\frac{a_n+i}{N_n+i},
\qquad j\ge 1.
\end{equation}
Hill's rule specifies every conditional moment as an explicit function
of $(n,S_n)$.  It is a Bayes rule and therefore admissible
(Theorem~\ref{thm:complete-class}(a)).

The first three moments with $N_n=n+1$ are:
\begin{align}
\mu_1&=\frac{a_n}{N_n}=\frac{S_n+\tfrac12}{n+1},\label{eq:jeff-mu1}\\
\mu_2&=\frac{a_n(a_n+1)}{N_n(N_n+1)}
=\frac{(S_n+\tfrac12)(S_n+\tfrac32)}{(n+1)(n+2)},\label{eq:jeff-mu2}\\
\mu_3&=\frac{a_n(a_n+1)(a_n+2)}{N_n(N_n+1)(N_n+2)}
=\frac{(S_n+\tfrac12)(S_n+\tfrac32)(S_n+\tfrac52)}{(n+1)(n+2)(n+3)}.
\label{eq:jeff-mu3}
\end{align}
Each is a rational function of $(n,S_n)$; the posterior variance is
$\sigma_n^2=\mu_2-\mu_1^2=a_n b_n/(N_n^2(N_n+1))$.

\subsection{Plug-in versus Bayes comparison}

Table~\ref{tab:comparison} compares the plug-in and Bayes (Jeffreys)
$k$-step run probabilities $\Pbb(X_{n+1:n+k}=0\mid\mathcal{F}_n)$
for $n=5$, $S_5=2$ ($m_5=5/12\approx 0.417$).

\begin{table}[htbp]
\centering
\caption{Plug-in vs.\ Bayes (Jeffreys) $k$-step run probabilities
$\Pbb(X_{n+1:n+k}=0\mid\mathcal{F}_n)$, $n=5$, $S_5=2$.}\label{tab:comparison}
\begin{tabular}{@{}cccc@{}}
\toprule
$k$ & Plug-in $(1-m_5)^k$ & Bayes $(b_5)_k/(N_5)_k$
& Relative gap \\
\midrule
2 & 0.340 & 0.375 & 9.3\% \\
3 & 0.199 & 0.258 & 23.0\% \\
4 & 0.116 & 0.186 & 37.8\% \\
\bottomrule
\end{tabular}
\end{table}

The Jeffreys posterior variance is
$\sigma_5^2=a_5 b_5/(N_5^2(N_5+1))=(2.5)(3.5)/(252)\approx 0.035$,
matching the $k=2$ discrepancy by~\eqref{eq:k2-identity}.

To verify: with $a_5=2.5$, $b_5=3.5$, $N_5=6$, the Bayes predictives are
$(b_5)_k/(N_5)_k$:
\begin{align*}
k=2:&\quad(3.5)(4.5)/\bigl((6)(7)\bigr)=0.375,\\
k=3:&\quad(3.5)(4.5)(5.5)/\bigl((6)(7)(8)\bigr)\approx 0.258,\\
k=4:&\quad(3.5)(4.5)(5.5)(6.5)/\bigl((6)(7)(8)(9)\bigr)\approx 0.186.
\end{align*}
The plug-in values are $(7/12)^k$: $0.340$, $0.199$, $0.116$.  The
relative gap grows from $9.3\%$ ($k=2$) to $37.8\%$ ($k=4$), reflecting
accumulating dependence on higher-order moments: for $k\ge 3$, the
second-order Taylor bound $\frac{k(k-1)}{2}\sigma_n^2$ underestimates the
true discrepancy.

\begin{figure}[htbp]
\centering
\includegraphics[width=\textwidth]{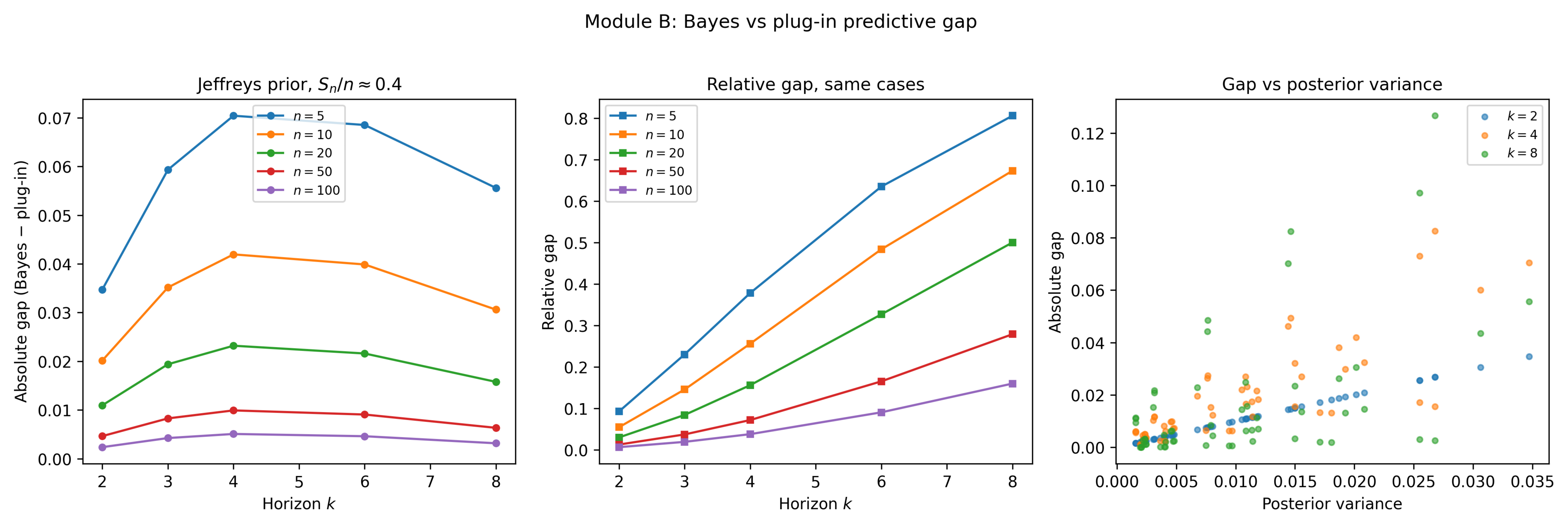}
\caption{Bayes vs.\ plug-in predictive gap under Beta-Bernoulli
updating.  \textit{Left}: absolute gap
$\Ebb[(1-\theta)^k\mid\mathcal{F}_n]-(1-m_n)^k$ vs.\ horizon $k$ for the
Jeffreys prior at $S_n/n\approx 0.4$, extending Table~\ref{tab:comparison}
across sample sizes $n\in\{5,10,20,50,100\}$.  The gap grows with $k$
(each step adds a new undetermined moment) and shrinks with $n$ (posterior
concentrates).  \textit{Centre}: the corresponding relative gap, which
exceeds $50\%$ at $k=8$ for $n=5$, illustrating that the plug-in can
substantially underestimate the Bayes predictive at moderate horizons.
\textit{Right}: absolute gap vs.\ posterior variance across all priors and
configurations, confirming the $O(\sigma_n^2)$ scaling from
Proposition~\ref{prop:quantitative}.  All values are computed exactly via
$(b_n)_k/(N_n)_k$.}
\label{fig:bayes-plugin-gap}
\end{figure}

\begin{remark}[Lane--Sudderth caveat]\label{rem:lane-sudderth}
Hill's $A_{(n)}$ was formulated nonparametrically.  Lane and
Sudderth~(\citeyear{lane1984conditionally}) showed that in nonparametric
settings, conglomerability can fail.  In the Bernoulli case,
$\BetaD(\tfrac12,\tfrac12)$ is a proper prior and no such issue arises.
\end{remark}

\begin{remark}[Red/black card prediction]\label{rem:red-black}
In the standard deck-prediction problem, one-step prediction depends
only on the posterior mean (martingale coherence), but $k$-step run
probabilities require $\Ebb[(1-\theta)^k\mid\mathcal{F}_n]$, hence all
posterior moments up to order~$k$ (Corollary~\ref{cor:moment-hierarchy}).
The curvature of $(1-\theta)^k$ makes the variance contribution
non-negligible whenever the remaining composition deviates from~$1/2$.
\end{remark}

\section{Asymptotic reconciliation}\label{sec:asymp}

The moment insufficiency of Sections~\ref{sec:insuff}--\ref{sec:decision} is a
\emph{finite-sample structural phenomenon}.  It does not preclude asymptotic
proximity between empirical and predictive measures, which has been studied
extensively.

Berti, Crimaldi, Pratelli, and Rigo~(\citeyear{berti2009rate}) study
the empirical process
$C_n(B)=\sqrt{n}\{\mu_n(B)-\Pbb(X_{n+1}\in B\mid\mathcal{F}_n)\}$
for c.i.d.\ and exchangeable sequences, obtaining stable convergence
at rate $n^{-1/2}$.
Berti, Pratelli, and Rigo~(\citeyear{berti2004limit}) establish
law-level rates using bounded Lipschitz metrics, and
\citet{berti2017rate} give matching bounds for
finite alphabets: in the Bernoulli case,
\begin{equation}\label{eq:bpr-bounds}
\frac{a}{n}\;\le\;\rho\bigl[\mathcal{L}(\mu_n),\,\mathcal{L}(\mu)\bigr]
\;\le\;\frac{b}{n+1},
\end{equation}
where $a=\Ebb[\theta(1-\theta)]$ and $b$ depends on
the total variation of the prior density.  The lower bound is
driven by the expected posterior variance, the same
quantity that governs the moment-hierarchy discrepancy in
Proposition~\ref{prop:quantitative}.
See also \citet{berti2018asymptotic} and
\citet{leisen2025weak} for further asymptotic developments.

Our concern is complementary: we ask what finite-horizon predictive
content is \emph{identified by first-moment information itself},
independent of any asymptotic limit.  The bounds
of~(\citeyear{berti2009rate}) and~(\citeyear{berti2017rate}) measure
how fast empirical and predictive \emph{measures} approach one another;
our results identify which \emph{functionals} of the posterior (moments
of order $\ge 2$) remain structurally underdetermined by the first
moment alone.  Despite this structural incompleteness, posterior
concentration brings plug-in and Bayes predictives close together
asymptotically, though this closeness does not erase the hierarchy.

\begin{proposition}[Asymptotic agreement]\label{prop:asymp-agree}
Let $k\ge 1$ be fixed and let
$\sigma_n^2=\Var(\theta\mid\mathcal{F}_n)\to 0$ a.s.  Then
\[
\bigl|\Ebb[(1-\theta)^k\mid\mathcal{F}_n]
-(1-\Ebb[\theta\mid\mathcal{F}_n])^k\bigr|
\le\tfrac{k(k-1)}{2}\,\sigma_n^2\to 0\quad\text{a.s.}
\]
\end{proposition}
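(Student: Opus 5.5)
The plan is to reduce the statement to Proposition~\ref{prop:quantitative} and handle the trivial cases separately. First, for $k=1$ the left-hand side is $|\Ebb[1-\theta\mid\mathcal{F}_n]-(1-m_n)|=0$ by linearity of conditional expectation, and the right-hand side is also $0$. Hence the bound holds with equality and there is nothing to prove. Likewise, on the event $\sigma_n^2=0$ the posterior is a point mass at $m_n$, so both sides vanish for every $k$. This degenerate case must be split off, because the mean-value step in Proposition~\ref{prop:quantitative} divides by $\sigma_n^2$.

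For $k\ge 2$ and $\sigma_n^2>0$, I will invoke Proposition~\ref{prop:quantitative} directly. It writes the difference as
\[
\tfrac{k(k-1)}{2}(1-\xi_n)^{k-2}\sigma_n^2 \quad\text{with } \xi_n\in[0,1].
\]
This quantity is nonnegative, so the absolute value equals the quantity itself. Since $0\le(1-\xi_n)^{k-2}\le 1$, we get the bound $\tfrac{k(k-1)}{2}\sigma_n^2$, exactly as in~\eqref{eq:quantitative-bounds}. If one wants to avoid the intermediate point $\xi_n$, the same bound follows from the pointwise Taylor inequality
\[
0\le f(\theta)-f(m_n)-f'(m_n)(\theta-m_n)\le \tfrac12\sup_{[0,1]}|f''|\,(\theta-m_n)^2,
\]
with $f(\theta)=(1-\theta)^k$ and $\sup|f''|=k(k-1)$, after taking conditional expectations and using $\Ebb[\theta-m_n\mid\mathcal{F}_n]=0$.

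The deterministic bound holds for every $\omega$, using a regular version of the posterior, which exists because $[0,1]$ is Polish. The constant $\tfrac{k(k-1)}{2}$ does not depend on $n$ or $\omega$. So on the full-measure event where $\sigma_n^2\to0$, the right-hand side tends to $0$ and the difference is squeezed to $0$, which gives the almost sure convergence.

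No step is a real obstacle. The only point needing care is measurability and the version issue: conditional expectations are defined only a.s., so the inequality should be stated for a fixed regular conditional distribution $\Pi(\cdot\mid\mathcal{F}_n)$. Both sides are then computed from the same version, and countably many $n$ lose only a null set.
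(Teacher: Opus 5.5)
Your proposal is correct and follows the paper's proof, which invokes Proposition~\ref{prop:quantitative} with $f(\theta)=(1-\theta)^k$ and bounds $(1-\xi_n)^{k-2}\le 1$. Your separate handling of $k=1$ and of the event $\sigma_n^2=0$ is a small tightening the paper omits, since Proposition~\ref{prop:quantitative} is stated only for $k\ge 2$ and its mean-value step divides by $\sigma_n^2$.
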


\begin{proof}
By Proposition~\ref{prop:quantitative} with $f(\theta)=(1-\theta)^k$,
the discrepancy satisfies
$|\Ebb[(1-\theta)^k\mid\mathcal{F}_n]-(1-m_n)^k|
=\frac{k(k-1)}{2}(1-\xi_n)^{k-2}\sigma_n^2$
for some $\xi_n\in[0,1]$.  Since $(1-\xi_n)^{k-2}\le 1$, the bound
$\frac{k(k-1)}{2}\sigma_n^2$ holds.  As $\sigma_n^2\to 0$ a.s.\ by
assumption, the right-hand side vanishes for each fixed $k$, giving the
claimed convergence.
\end{proof}

Under $\BetaD(a,b)$ prior, the posterior variance is
\begin{equation}\label{eq:beta-variance}
\sigma_n^2
=\frac{(a+S_n)(b+n-S_n)}{(a+b+n)^2(a+b+n+1)}.
\end{equation}
Since $S_n/n\to\theta_0$ a.s., $\sigma_n^2\sim\theta_0(1-\theta_0)/n$,
confirming $\sigma_n^2=O(1/n)$.  The following proposition gives a
uniform-in-$k$ scaling result.

\begin{proposition}[Uniform bound]\label{prop:uniform-k}
Let $K_n=o(\sqrt{n})$.  Then
$\sup_{1\le k\le K_n}\bigl|\Ebb[(1-\theta)^k\mid\mathcal{F}_n]
-(1-m_n)^k\bigr|\to 0$ a.s.
\end{proposition}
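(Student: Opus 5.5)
Apply Proposition~\ref{prop:quantitative} uniformly in $k$, then use the $O(1/n)$ decay of the posterior variance to absorb the $k^2$ growth of the constant. For each $k\le K_n$, Proposition~\ref{prop:quantitative} gives
\[
0\le \Ebb[(1-\theta)^k\mid\mathcal{F}_n]-(1-m_n)^k\le \tfrac{k(k-1)}{2}\,\sigma_n^2 .
\]
The right-hand side is increasing in $k$, so
\[
\sup_{1\le k\le K_n}\bigl|\Ebb[(1-\theta)^k\mid\mathcal{F}_n]-(1-m_n)^k\bigr|\le \tfrac{K_n^2}{2}\,\sigma_n^2 .
\]
The case $k=1$ contributes zero and is harmless. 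The claim therefore reduces to showing $K_n^2\sigma_n^2\to 0$ a.s.

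Step two is to control $\sigma_n^2$ almost surely. Here I would use the setting in which the statement is made: the Beta$(a,b)$ posterior with variance \eqref{eq:beta-variance}, which covers the Jeffreys/Hill case. Bound the numerator crudely by $(a+S_n)(b+n-S_n)\le (a+b+n)^2/4$, which holds by AM--GM since the two factors sum to $a+b+n$. This gives the deterministic bound
\[
\sigma_n^2\le \frac{1}{4(a+b+n+1)}\le \frac{1}{4n}
\]
on every sample path, with no use of the strong law. Then
\[
K_n^2\sigma_n^2\le K_n^2/(4n)=\tfrac14\,(K_n/\sqrt n)^2\to 0,
\]
since $K_n=o(\sqrt n)$. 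The convergence is in fact sure, hence a.s.

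The main obstacle is that the statement as written does not fix the prior. For a general mixing measure $\Pi$, I need $\sigma_n^2=O(1/n)$ a.s.; the mere convergence $\sigma_n^2\to 0$ assumed in Proposition~\ref{prop:asymp-agree} is too weak against the $K_n^2$ factor. My fallback for general $\Pi$ is a distribution-free bound. Conditionally on $\theta$, $\Ebb[(\theta-S_n/n)^2\mid\theta]=\theta(1-\theta)/n\le 1/(4n)$. Because the posterior mean minimizes posterior mean-squared error, taking expectations yields $\Ebb[\sigma_n^2]\le 1/(4n)$. This gives convergence in $L^1$, hence in probability, of $K_n^2\sigma_n^2$.

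Upgrading the fallback to almost sure convergence requires more. I would try either to assume $\Pi$ has a density bounded away from zero near the truth, so that a Laplace or Bernstein--von~Mises argument yields $n\sigma_n^2\to\theta_0(1-\theta_0)$ a.s. as noted after \eqref{eq:beta-variance}, or to pass along a subsequence with a Borel--Cantelli argument. I would state the proof for the Beta family, where the deterministic bound makes it clean, and flag the general case as requiring such a regularity condition.
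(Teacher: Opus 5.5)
Your proposal is correct and matches the paper's proof: bound each term by $\frac{k(k-1)}{2}\sigma_n^2$ via Proposition~\ref{prop:quantitative}, take the supremum to get $\frac{K_n^2}{2}\sigma_n^2$, and invoke $\sigma_n^2=O(1/n)$, which the paper likewise justifies only under a $\BetaD(a,b)$ prior (``or any prior with $\sigma_n^2=O(1/n)$''). Your AM--GM bound $\sigma_n^2\le 1/(4(a+b+n+1))$ slightly sharpens the paper's appeal to $S_n/n\to\theta_0$, since it makes the convergence sure rather than almost sure, and your caveat about general $\Pi$ states explicitly the hypothesis the paper leaves implicit.
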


\begin{proof}
By Proposition~\ref{prop:quantitative}, for each $k\le K_n$,
$|\Ebb[(1-\theta)^k\mid\mathcal{F}_n]-(1-m_n)^k|
\le\frac{k(k-1)}{2}\sigma_n^2$.
Taking the supremum over $1\le k\le K_n$ replaces $k(k-1)/2$ by its
maximum $K_n(K_n-1)/2\le K_n^2/2$, giving
$\sup_{k\le K_n}|\Ebb[(1-\theta)^k\mid\mathcal{F}_n]-(1-m_n)^k|
\le\frac{K_n^2}{2}\sigma_n^2$.
Under $\BetaD(a,b)$ (or any prior with $\sigma_n^2=O(1/n)$), the
right-hand side is $O(K_n^2/n)$, which converges to zero if and only
if $K_n=o(\sqrt{n})$.  This threshold reflects the tension between
growing curvature in $(1-\theta)^k$ and decreasing posterior variance.
\end{proof}

\begin{proposition}[Bernstein--von Mises rate]\label{prop:bvm-rate}
Let $\theta_0\in(0,1)$ and suppose that the prior~$\Pi$ admits a
continuous, strictly positive density in a neighbourhood of~$\theta_0$.
Under~$\Pbb_{\theta_0}$ (that is, when data are generated
i.i.d.\ $\Bern(\theta_0)$ and posterior inference uses the
prior~$\Pi$), for fixed~$k\ge 2$ and as $n\to\infty$,
\[
\Ebb[(1-\theta)^k\mid\mathcal{F}_n]-(1-m_n)^k
=O(n^{-1})\quad\Pbb_{\theta_0}\text{-a.s.}
\]
The discrepancy between the Bayes predictive and the plug-in predictive
thus vanishes at rate $O(1/n)$.
\end{proposition}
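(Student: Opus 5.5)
The plan is to reduce the statement to a posterior-variance bound and then control the variance through a Laplace-type argument. By Proposition~\ref{prop:quantitative}, for every $n$ we already have
\[
0\le \Ebb[(1-\theta)^k\mid\mathcal{F}_n]-(1-m_n)^k\le \tfrac{k(k-1)}{2}\,\sigma_n^2 .
\]
It therefore suffices to show that $\limsup_n n\,\sigma_n^2<\infty$ holds $\Pbb_{\theta_0}$-a.s. More specifically, the target is $n\sigma_n^2\to\theta_0(1-\theta_0)$ a.s., which is the Bernstein--von~Mises variance statement. Since $\sigma_n^2\le\Ebb[(\theta-c)^2\mid\mathcal{F}_n]$ for any $\mathcal{F}_n$-measurable $c$, I will bound $\Ebb[(\theta-L_n)^2\mid\mathcal{F}_n]$ with $L_n=S_n/n$. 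This spares me from having to locate $m_n$ precisely.

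\textbf{Representation and splitting.} I use the Sanov representation~\eqref{eq:sanov-posterior}, which gives
\[
\Ebb[(\theta-L_n)^2\mid\mathcal{F}_n]=\frac{\int(\theta-L_n)^2e^{-nD_{\mathrm{KL}}(L_n\|\theta)}\,\Pi(d\theta)}{\int e^{-nD_{\mathrm{KL}}(L_n\|\theta)}\,\Pi(d\theta)}.
\]
By the strong law, $L_n\to\theta_0$ a.s., so I work on a full-measure path where $L_n\in[\theta_0-\eta,\theta_0+\eta]$ for large $n$. Here $\eta$ is chosen so that the prior has density $\pi\in[c,C]$ on $U=[\theta_0-3\eta,\theta_0+3\eta]\subset(0,1)$. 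I split both integrals into $U$ and $U^c$.

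\textbf{Denominator.} On $U$, the second derivative of $D_{\mathrm{KL}}(L_n\|\cdot)$ computed in Theorem~\ref{thm:sanov-bridge}(ii) is bounded above by some $A$. Taylor's theorem therefore gives $D_{\mathrm{KL}}\le A\delta^2/2$, and the denominator is at least $c\int_{|\delta|\le\eta}e^{-nA\delta^2/2}d\delta\ge c'n^{-1/2}$.

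\textbf{Numerator.} On $U$, the same second derivative is bounded below by some $a>0$, since it is at least $L_n/\theta^2\ge L_n$, which is bounded away from zero. Hence $D_{\mathrm{KL}}\ge a\delta^2/2$, and the $U$-part of the numerator is at most $C\int\delta^2e^{-na\delta^2/2}d\delta=O(n^{-3/2})$. On $U^c$ we have $|\theta-L_n|\ge2\eta$. By convexity of $\theta\mapsto D_{\mathrm{KL}}(L_n\|\theta)$ and its minimum at $L_n$, this gives $D_{\mathrm{KL}}(L_n\|\theta)\ge\min\{D_{\mathrm{KL}}(L_n\|L_n\pm2\eta)\}\ge 2a\eta^2$. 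The $U^c$-part is therefore at most $e^{-2na\eta^2}$, which is exponentially small, and no density assumption is needed there. Dividing yields $\sigma_n^2\le O(n^{-3/2})/n^{-1/2}+e^{-cn}\sqrt n=O(1/n)$ a.s., and the claim follows.

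\textbf{Main obstacle.} The delicate step is the uniformity of the constants $a$, $A$, $c'$ over the random centre $L_n$, together with the localisation. Both are handled by fixing the a.s.\ event $L_n\to\theta_0$ and shrinking $\eta$, so that all curvature bounds hold uniformly for $L_n$ in a compact subinterval of $(0,1)$. Note that only local positivity and continuity of the prior density are used; the prior may even vanish away from $\theta_0$. One could also sharpen the result to the exact constant $\tfrac{k(k-1)}{2}(1-\theta_0)^{k-2}\theta_0(1-\theta_0)/n$ via the $\xi_n\to\theta_0$ refinement in~\eqref{eq:quantitative-bound}, but this is not needed.
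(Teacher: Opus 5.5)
Your proof is correct. It shares the paper's first step: both use Proposition~\ref{prop:quantitative} to reduce the claim to showing $\sigma_n^2=O(n^{-1})$ $\Pbb_{\theta_0}$-a.s. The difference is how that variance bound is obtained.

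The paper does not prove it. It invokes the Bernstein--von~Mises theorem and asserts $\sigma_n^2=O(n^{-1})$ a.s. You prove it directly with a Laplace-type argument on the Sanov representation~\eqref{eq:sanov-posterior}. You bound $\Ebb[(\theta-L_n)^2\mid\mathcal{F}_n]\ge\sigma_n^2$ using two-sided curvature bounds for $D_{\mathrm{KL}}(L_n\|\cdot)$ on a fixed neighbourhood $U$, together with exponential smallness off $U$.

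Your route buys rigour and self-containment. The textbook Bernstein--von~Mises theorem is a total-variation statement in $\Pbb_{\theta_0}$-probability. It does not by itself give almost-sure control of the posterior second moment; that needs an extra tail or uniform-integrability argument, which the citation leaves implicit. Your argument is pathwise on the strong-law event $L_n\to\theta_0$ with deterministic constants. It uses only that the prior density is bounded above and below near $\theta_0$.

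The paper's route, taken at face value, buys brevity and the sharp asymptotic $n\sigma_n^2\to\theta_0(1-\theta_0)$ quoted in Remark~\ref{rem:role-theta0}. Your argument yields only $\limsup_n n\sigma_n^2<\infty$. So the ``target'' $n\sigma_n^2\to\theta_0(1-\theta_0)$ you announce, and the sharpened constant in your last sentence, are not actually established by what you wrote. They would need a matching lower Laplace bound on the numerator plus an argument that $\xi_n\to\theta_0$. Neither is required for the proposition.

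A small simplification: your lower curvature bound holds globally, since $\partial_\theta^2 D_{\mathrm{KL}}(L_n\|\theta)=L_n/\theta^2+(1-L_n)/(1-\theta)^2\ge 1$ on $(0,1)$. Localisation is therefore needed only for the bounds on the prior density.
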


\begin{proof}
Under $\Pbb_{\theta_0}$, the observations are i.i.d.\
$\Bern(\theta_0)$, and the classical Bernstein--von~Mises theorem
\citep[Chapter~10]{vaart1998asymptotic} gives the posterior variance
$\sigma_n^2=O(n^{-1})$ $\Pbb_{\theta_0}$-a.s.; the regularity
conditions (continuous, strictly positive density near $\theta_0$)
are assumed.  By Proposition~\ref{prop:quantitative},
\[
0\;\le\;\Ebb[(1-\theta)^k\mid\mathcal{F}_n]-(1-m_n)^k
\;\le\;\frac{k(k-1)}{2}\,\sigma_n^2
\;=\;O(n^{-1}).
\]
The lower bound is strict when $\sigma_n^2>0$
(Proposition~\ref{prop:quantitative}), and the upper bound uses
$(1-\xi_n)^{k-2}\le 1$.
\end{proof}

\begin{remark}[Role of $\theta_0$]\label{rem:role-theta0}
The $O(n^{-1})$ rate in Proposition~\ref{prop:bvm-rate} holds under
$\Pbb_{\theta_0}$ for each fixed $\theta_0\in(0,1)$.  The implicit
constant depends on $\theta_0$ (through $\sigma_n^2\sim\theta_0(1-\theta_0)/n$)
and on $k$.  If $\Pi$ has full support, the regularity conditions hold
for every $\theta_0\in(0,1)$ simultaneously; there is no contradiction,
because for each $\theta_0$ the convergence is under the corresponding
product measure $\Pbb_{\theta_0}$, which generates different data
sequences and different posterior sequences.  The left-hand side is a
computable posterior functional that does not require knowledge
of~$\theta_0$, but its $\Pbb_{\theta_0}$-a.s.\ rate depends on the true
parameter through the data it generates.
\end{remark}

\begin{remark}[Finite-sample relevance]\label{rem:finite-sample}
The discrepancy vanishes as $n\to\infty$ for fixed~$k$.  When the
prediction horizon grows with $n$, Proposition~\ref{prop:uniform-k}
requires $k=o(\sqrt{n})$; for $k\ge C\sqrt{n}$, the bound
$\frac{k(k-1)}{2}\sigma_n^2$ remains $O(1)$.
In short, the plug-in and Bayes predictives agree uniformly over all
horizons that grow slower than~$\sqrt{n}$, but diverge once the
horizon reaches the~$\sqrt{n}$ scale: first-moment information
suffices for short-range prediction but not for long-range prediction,
even asymptotically.
\end{remark}

\begin{figure}[htbp]
\centering
\includegraphics[width=\textwidth]{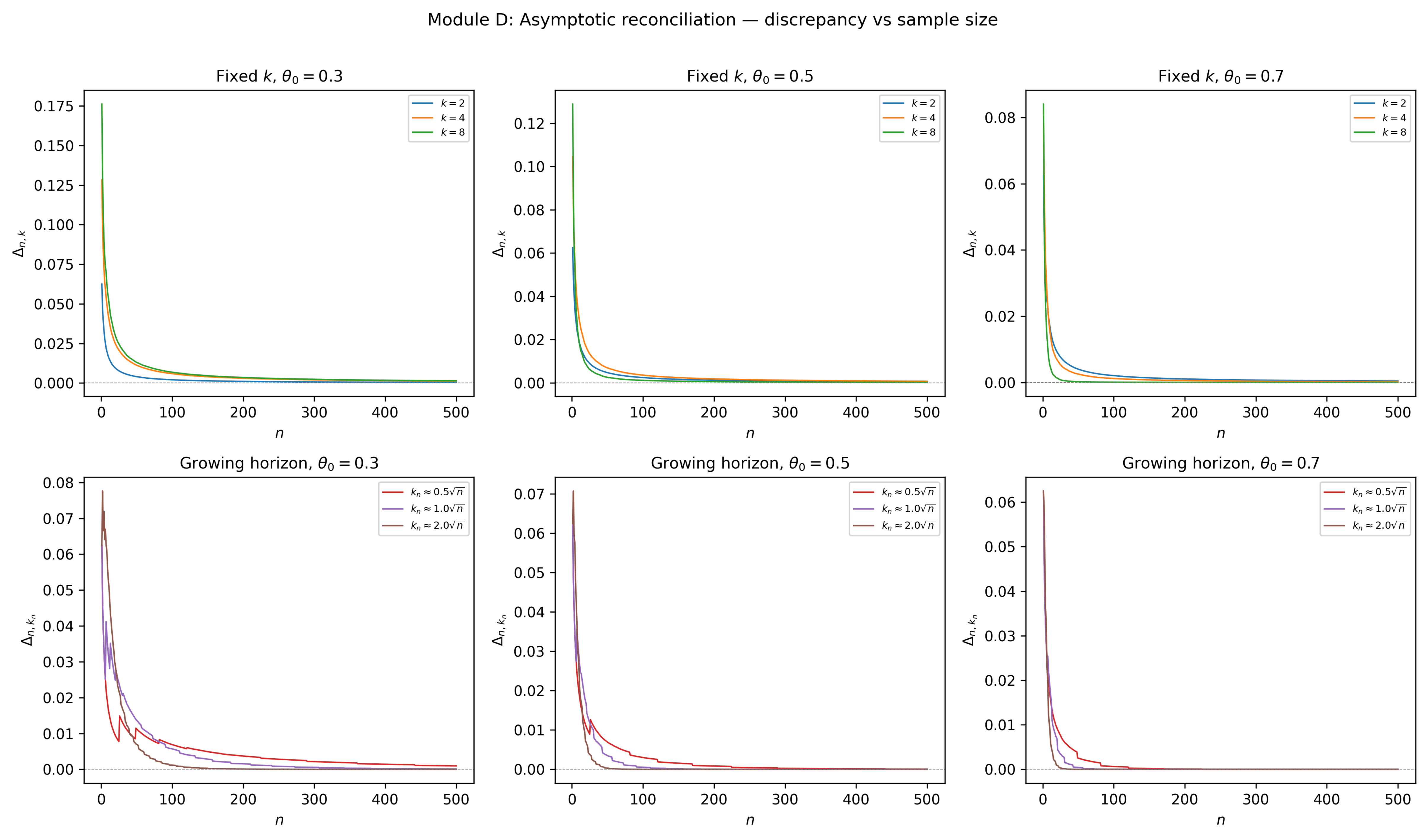}
\caption{Asymptotic reconciliation under Jeffreys prior, averaged over
200 simulated Bernoulli sequences.
\textit{Top row}: discrepancy $\Delta_{n,k}$ vs.\ sample size $n$ for
fixed horizons $k\in\{2,4,8\}$ at three true parameter values
$\theta_0\in\{0.3,0.5,0.7\}$.  The discrepancy vanishes at rate
$O(1/n)$ (Proposition~\ref{prop:bvm-rate}), with larger $k$ producing
a proportionally larger gap at each $n$.
\textit{Bottom row}: growing horizon $k_n\approx c\sqrt{n}$ for
$c\in\{0.5,1.0,2.0\}$.  When $k_n=o(\sqrt{n})$ the discrepancy
still vanishes; at $k_n\sim c\sqrt{n}$ it stabilises at a positive level,
illustrating the $\sqrt{n}$ threshold of
Proposition~\ref{prop:uniform-k}: first-moment information suffices for
short-range prediction but not for long-range prediction.}
\label{fig:asymptotic}
\end{figure}

\section{Structural hierarchy}\label{sec:hierarchy}

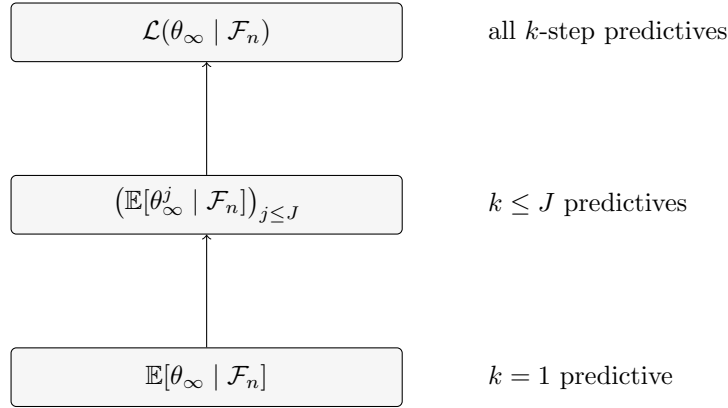
\begin{figure}[htbp]
\centering
\begin{tikzpicture}[
  node distance=1.5cm,
  box/.style={draw, rounded corners=2pt, fill=gray!7,
    minimum width=5.2cm, minimum height=0.78cm,
    align=center, font=\small},
  ann/.style={anchor=west, align=left, font=\small}]
\node[box] (L1) {$\Ebb[\theta_\infty\mid\mathcal{F}_n]$};
\node[box, above=of L1] (L2)
    {$\bigl(\Ebb[\theta_\infty^j\mid\mathcal{F}_n]\bigr)_{j\le J}$};
\node[box, above=of L2] (L3)
    {$\mathcal{L}(\theta_\infty\mid\mathcal{F}_n)$};
\draw[->] (L1) -- (L2);
\draw[->] (L2) -- (L3);
\node[ann, right=1.0cm of L1] {$k=1$ predictive};
\node[ann, right=1.0cm of L2] {$k\le J$ predictives};
\node[ann, right=1.0cm of L3] {all $k$-step predictives};
\end{tikzpicture}
\caption{The predictive moment hierarchy.  Bottom: the martingale /
mean-only tier (cf.\ linear Bayes) constrains only the first moment and determines
one-step predictives.  Middle: specifying $J$ moments
(Goldstein~(\citeyear{goldstein1975}) prevision tier) determines $k$-step
predictives for $k\le J$ but not for $k>J$ (Theorem~\ref{thm:insuff}).
Top: the full conditional law determines all $k$-step predictives
(predictive completeness; Theorem~\ref{thm:closure}).}
\label{fig:pred-hierarchy}
\end{figure}

\subsection{Hierarchy of predictive frameworks}

Table~\ref{tab:hierarchy} summarizes the relationship between the
structure imposed by a predictive framework and the moments it
determines.

\begin{table}[htbp]
\centering
\caption{Structural hierarchy for exchangeable Bernoulli
sequences.}\label{tab:hierarchy}
\begin{tabular}{@{}lll@{}}
\toprule
Framework & Structure imposed & Moments determined \\
\midrule
Martingale posterior & $\Ebb[\theta_\infty\mid\mathcal{F}_n]=\theta_n$
& 1st moment \\
$j$-moment specification & $\mu_1,\ldots,\mu_j$ constrained
& up to $j$-th moment \\
Full Bayesian posterior & $\Pi(\cdot\mid\mathcal{F}_n)$ specified
& all moments \\
Conformal / rank calibration & marginal coverage & none \\
\bottomrule
\end{tabular}
\end{table}

The hierarchy is strict: specifying $j$ moments determines $k$-step
predictives for $k\le j$ (Corollary~\ref{cor:moment-hierarchy}) but not
for $k>j$ (Theorem~\ref{thm:insuff} at order $j+1$).  The middle tier
corresponds to linear Bayes / prevision updating
(cf.\ Goldstein~(\citeyear{goldstein1975})).

\begin{remark}[Conformal row]\label{rem:conformal-row}
Conformal prediction achieves marginal coverage
$\Pbb(Y_{n+1}\in\hat{C}_n)\ge 1-\alpha$ under exchangeability without
specifying any moments.  The resulting prediction sets are finite-sample
valid and distribution-free, but set-valued; marginal coverage does not
imply correct specification of predictive probabilities.  This is a
qualitatively different guarantee from predictive completeness and the two
are not directly comparable.
\end{remark}

For a predictive characterization of exchangeable laws via mixing measures,
see Fortini and Petrone~(\citeyear{fortini2017predictive}).

\subsection{Closure theorem}

\begin{definition}[Predictive completeness]\label{def:pred-complete}
A martingale posterior is \emph{predictively complete} if it determines
$\Pbb(X_{n+1:n+k}=x_{1:k}\mid\mathcal{F}_n)$ for all $k\ge 1$ and all
cylinder events.
\end{definition}

\begin{theorem}[Closure]\label{thm:closure}
Let $(\theta_n)$ be a $[0,1]$-valued martingale satisfying
Assumption~\ref{ass:bounded}, with terminal value $\theta_\infty$.  The
following are equivalent:
\begin{enumerate}[label=(\roman*),nosep]
\item The martingale posterior determines
$\Ebb[(1-\theta_\infty)^k\mid\mathcal{F}_n]$ for all $k\ge 1$.
\item The conditional law of $\theta_\infty$ given $\mathcal{F}_n$ is
uniquely determined (a.s.).
\item The conditional moment sequence
$(\Ebb[\theta_\infty^j\mid\mathcal{F}_n])_{j\ge 1}$ is uniquely
determined (a.s.).
\end{enumerate}
\end{theorem}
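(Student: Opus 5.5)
The plan is to prove the cycle of implications (ii)$\Rightarrow$(iii)$\Rightarrow$(i)$\Rightarrow$(iii)$\Rightarrow$(ii). Throughout, ``determined'' means determined by the specification of the martingale posterior, as a function of $\mathcal{F}_n$ up to null sets. Every step will be a deterministic map between conditional objects, so each implication reduces to a statement about bijections between a measure on $[0,1]$, its moment sequence, and its run-probability sequence.

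First, (ii)$\Rightarrow$(iii). Let $\Pi_n^\infty(\cdot)$ denote a regular conditional law of $\theta_\infty$ given $\mathcal{F}_n$. It exists because $\theta_\infty$ takes values in the Polish space $[0,1]$, by Proposition~\ref{prop:terminal}. Then $\Ebb[\theta_\infty^j\mid\mathcal{F}_n]=\int\theta^j\,\Pi_n^\infty(d\theta)$ a.s. for every $j$, so a uniquely specified law yields a uniquely specified moment sequence.

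Second, (iii)$\Leftrightarrow$(i). For (iii)$\Rightarrow$(i), apply the binomial expansion of Corollary~\ref{cor:moment-hierarchy} with $\theta$ replaced by $\theta_\infty$. That identity is purely algebraic followed by linearity of conditional expectation, so it does not need exchangeability, and each $\Ebb[(1-\theta_\infty)^k\mid\mathcal{F}_n]$ is a finite linear combination of determined moments. For (i)$\Rightarrow$(iii), use the M\"obius inversion of Proposition~\ref{prop:mobius} in the same way, again valid verbatim for $\theta_\infty$: the triangular system with unit diagonal expresses $\mu_j$ as a finite combination of $r_0=1,r_1,\dots,r_j$. One point to address: the a.s. qualifiers must hold simultaneously for all $j$ and $k$. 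Countable unions of null sets handle this.

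Third, (iii)$\Rightarrow$(ii), which I expect to be the only substantive step. The plan is to invoke Hausdorff determinacy (Theorem~\ref{thm:hausdorff}, as in Step~2 of Theorem~\ref{thm:injective}) $\omega$ by $\omega$. Fix a version of each conditional moment, and off a single null set $N$ choose these so that $(\mu_j(\omega))_j$ is the moment sequence of $\Pi_n^\infty(\omega,\cdot)$. Then any two candidate regular conditional laws whose moments agree a.s. agree for all $\omega\notin N\cup N'$, by determinacy on $[0,1]$. Equivalently, one could argue by Weierstrass density (Remark~\ref{rem:poly-density}): agreement of $\int p\,d\Pi_n^\infty$ for polynomials $p$ extends to all $f\in C[0,1]$, hence to the measure, after restricting to a countable dense set of $f$ to control null sets. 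The obstacle is therefore measure-theoretic bookkeeping rather than analysis. Compactness of $[0,1]$, guaranteed by Assumption~\ref{ass:bounded}, is exactly what makes this step go through, in line with Remark~\ref{rem:real-line}. I would close by noting that the martingale condition alone pins down only $\mu_1=\theta_n$, so by Theorem~\ref{thm:insuff} none of the three conditions holds without further specification.
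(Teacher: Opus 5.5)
Your proposal is correct and follows essentially the same route as the paper: (ii)$\Rightarrow$(iii) by integrating $\theta^j$ against the conditional law, (iii)$\Leftrightarrow$(i) via the binomial expansion and M\"obius inversion, and Hausdorff determinacy to recover the conditional law from its moments. Your treatment of regular conditional laws and simultaneous null sets spells out bookkeeping the paper leaves implicit. One minor point: the M\"obius system has diagonal entries $(-1)^j$ rather than a unit diagonal, which does not affect invertibility.
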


\begin{proof}
(ii)$\Rightarrow$(iii): If the conditional law
$\mathcal{L}(\theta_\infty\mid\mathcal{F}_n)$ is uniquely determined,
then every measurable function of $\theta_\infty$ has a unique
conditional expectation.  In particular,
$\Ebb[\theta_\infty^j\mid\mathcal{F}_n]=\int\theta^j\dx\mathcal{L}(\theta_\infty\mid\mathcal{F}_n)$
is uniquely determined for each $j\ge 1$.

(iii)$\Rightarrow$(i): By the moment expansion
(Corollary~\ref{cor:moment-hierarchy}),
$\Ebb[(1-\theta_\infty)^k\mid\mathcal{F}_n]
=\sum_{j=0}^k\binom{k}{j}(-1)^j\Ebb[\theta_\infty^j\mid\mathcal{F}_n]$,
which is a polynomial in the conditional moments of orders
$1,\ldots,k$.  If these moments are uniquely determined, so is every
$k$-step run probability.

(i)$\Rightarrow$(ii): Suppose $\Ebb[(1-\theta_\infty)^k\mid\mathcal{F}_n]$
is uniquely determined for all $k\ge 1$.  By M\"obius inversion
(Proposition~\ref{prop:mobius}), the conditional moment sequence
$(\Ebb[\theta_\infty^j\mid\mathcal{F}_n])_{j\ge 1}$ is also uniquely
determined.  Since $\theta_\infty\in[0,1]$, the Hausdorff moment theorem
(Theorem~\ref{thm:hausdorff}) guarantees that a probability measure on
$[0,1]$ is uniquely determined by its moment sequence.  Therefore the
conditional law $\mathcal{L}(\theta_\infty\mid\mathcal{F}_n)$ is
uniquely determined a.s.

The equivalence (i)$\Leftrightarrow$(iii) also follows directly from
the invertibility of the M\"obius matrix
(Proposition~\ref{prop:mobius}), without invoking the Hausdorff theorem.
\end{proof}

A martingale posterior is predictively complete if and only if it
uniquely determines the conditional law of $\theta_\infty$ given
$\mathcal{F}_n$.  Condition~\eqref{eq:mp-condition} is necessary but not
sufficient: it fixes only $\Ebb[\theta_\infty\mid\mathcal{F}_n]=\theta_n$
and leaves the conditional law underdetermined.  On $[0,1]$, specifying
the law is equivalent to specifying all moments (Hausdorff determinacy;
Theorem~\ref{thm:hausdorff}), which is equivalent to specifying all
$k$-step predictives.

\section{Optimal stopping and dynamic amplification}\label{sec:stopping}

The moment hierarchy has direct consequences for sequential decision
problems.  Optimal stopping is the prototypical such problem: a
clinician deciding when to stop a trial, a firm choosing when to
exercise an option, or an analyst choosing when to ``peek'' at
accumulating data must each evaluate multi-step continuation values.
These values are polynomial functionals of the posterior and therefore
depend on moments beyond the mean.  The distortion introduced by
ignoring higher moments is not merely theoretical: it shifts the
optimal stopping boundary itself.

\subsection{Setup}\label{sec:stopping-setup}

Fix $K\ge 2$.  At time $n$, define the value of the multi-step
prediction problem under the full posterior law:
\begin{equation}\label{eq:value-full}
  V_n \;:=\; \sup_{2\le\tau\le K}\;\Ebb\!\left[(1-\theta_\infty)^\tau
  \mid\mathcal{F}_n\right].
\end{equation}
The constraint $\tau\ge 2$ restricts attention to genuinely multi-step
objectives; the case $\tau=1$ reduces to first-moment coherence already
determined by the martingale condition.  Under the mean-only plug-in,
replace $\theta_\infty$ by its conditional mean
$m_n=\Ebb[\theta_\infty\mid\mathcal{F}_n]$:
\begin{equation}\label{eq:value-plugin}
  \widetilde{V}_n \;:=\; \sup_{2\le\tau\le K}\;(1-m_n)^\tau.
\end{equation}

\subsection{Strict value distortion}\label{sec:stopping-prop}

\begin{proposition}[Exact value gap]\label{prop:value-gap}
Let $\sigma_n^2:=\Var(\theta_\infty\mid\mathcal{F}_n)$.  If\/
$\sigma_n^2>0$, then $V_n-\widetilde{V}_n=\sigma_n^2$.
\end{proposition}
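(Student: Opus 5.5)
The plan is to compute both suprema in closed form: each is attained at the smallest admissible horizon $\tau=2$. After that, the gap reduces to the two-step identity~\eqref{eq:k2-identity}. I read the suprema in~\eqref{eq:value-full} and~\eqref{eq:value-plugin} as ranging over deterministic integers $\tau\in\{2,\dots,K\}$, as the displayed definitions indicate. The whole argument is then a monotonicity observation plus the variance identity.

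First, the plug-in value. Since $\theta_\infty\in[0,1]$, we have $m_n\in[0,1]$, so $1-m_n\in[0,1]$. Hence $\tau\mapsto(1-m_n)^\tau$ is nonincreasing on the integers $\tau\ge 2$. Therefore
\[
\widetilde V_n=(1-m_n)^2 .
\]

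Second, the full value. For every $\theta\in[0,1]$ and $\tau\ge 2$ we have the pointwise inequality $(1-\theta)^{\tau+1}\le(1-\theta)^\tau$. Taking conditional expectations (monotonicity of conditional expectation) gives
\[
\Ebb[(1-\theta_\infty)^{\tau+1}\mid\mathcal{F}_n]\le\Ebb[(1-\theta_\infty)^{\tau}\mid\mathcal{F}_n]\quad\text{a.s.}
\]
So the supremum over the finite set $\{2,\dots,K\}$ is attained at $\tau=2$, and $V_n=\Ebb[(1-\theta_\infty)^2\mid\mathcal{F}_n]$. Because $K$ is finite, only finitely many a.s.\ inequalities are involved, so there is no null-set issue.

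Third, expand the square and use $\Ebb[\theta_\infty\mid\mathcal{F}_n]=m_n$ (Proposition~\ref{prop:terminal}). This is exactly~\eqref{eq:k2-identity} applied to the conditional law of $\theta_\infty$:
\[
V_n=(1-m_n)^2+\Var(\theta_\infty\mid\mathcal{F}_n)=(1-m_n)^2+\sigma_n^2 .
\]
Subtracting $\widetilde V_n$ yields $V_n-\widetilde V_n=\sigma_n^2$.

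The hypothesis $\sigma_n^2>0$ is not needed for the identity itself; it only makes the gap strict. I would remark that it forces $m_n\in(0,1)$, which is consistent with Proposition~\ref{prop:quantitative}.

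There is no real obstacle in this argument. The only point needing care is to check that both maximisers are $\tau=2$, rather than possibly different horizons for $V_n$ and $\widetilde V_n$. Pointwise monotonicity in $\tau$ settles this for both simultaneously. If the paper later intends $\tau$ to be a genuine $\mathcal{F}$-stopping time, I would add that the same pointwise domination $(1-\theta_\infty)^\tau\le(1-\theta_\infty)^2$ for $\tau\ge 2$ still gives the value $(1-m_n)^2+\sigma_n^2$, attained by $\tau\equiv2$.
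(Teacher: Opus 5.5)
Your proof is correct and follows the paper's route: both suprema are attained at $\tau=2$, and the gap then reduces exactly to the two-step identity~\eqref{eq:k2-identity}. Your justification is the right one, namely pointwise monotonicity in $\tau$ (i.e.\ $(1-\theta)^{\tau+1}\le(1-\theta)^\tau$ on $[0,1]$) followed by monotonicity of conditional expectation; the paper's proof instead cites the decrease of $x\mapsto(1-x)^\tau$, which is monotonicity in the wrong variable.
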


\begin{proof}
Since $x\mapsto(1-x)^\tau$ is decreasing on $[0,1]$ for each fixed
$\tau\ge 1$, both suprema are attained at the lower boundary $\tau=2$.
By the second-moment identity
$\Ebb[(1-\theta_\infty)^2\mid\mathcal{F}_n]=(1-m_n)^2+\sigma_n^2$
(equation~\eqref{eq:k2-identity}), we have
$V_n=(1-m_n)^2+\sigma_n^2$ and $\widetilde{V}_n=(1-m_n)^2$.
\end{proof}

\begin{corollary}[Stopping boundary distortion]\label{cor:stopping}
For $r\in(0,1)$, define the effective prediction horizon under the full
posterior as $\tau^*_n(r):=\max\{2\le\tau\le K:
\Ebb[(1-\theta_\infty)^\tau\mid\mathcal{F}_n]\ge r\}$, and the
plug-in horizon as $\widetilde\tau^*_n(r):=\max\{2\le\tau\le K:
(1-m_n)^\tau\ge r\}$.  There exist states $\mathcal{F}_n$ and
thresholds $r$ for which $\tau^*_n(r)>\widetilde\tau^*_n(r)$.
\end{corollary}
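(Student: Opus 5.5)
The existence claim in Corollary~\ref{cor:stopping} only needs one state $\mathcal{F}_n$ with $\sigma_n^2>0$ and one threshold $r$ that separates the two horizons. I will build this explicitly. By Proposition~\ref{prop:quantitative}, for every $\tau\ge 2$ the full-posterior functional is strictly larger than the plug-in:
\[
g(\tau):=\Ebb[(1-\theta_\infty)^\tau\mid\mathcal{F}_n]>(1-m_n)^\tau=:\tilde g(\tau).
\]
Both $g$ and $\tilde g$ are nonincreasing in $\tau$, since $(1-x)^\tau$ is nonincreasing in $\tau$ for $x\in[0,1]$. Each horizon is therefore the last index before a monotone sequence drops below $r$.

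\emph{Choice of $r$.} I will take $K\ge 3$ (so there is a horizon beyond $2$) and set $r:=g(3)$. Then $\tau^*_n(r)\ge 3$ by definition.

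\emph{Bounding the plug-in horizon.} To get $\widetilde\tau^*_n(r)\le 2$ I need $\tilde g(3)<r$. This is exactly $(1-m_n)^3<g(3)$, which is strict Jensen for $\tau=3$, i.e. Proposition~\ref{prop:quantitative}. Since $\tilde g$ is nonincreasing, $\tilde g(\tau)<r$ for every $\tau\ge 3$.

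\emph{Well-definedness of the plug-in maximum.} The remaining issue is that $\widetilde\tau^*_n(r)$ must exist, meaning $\tilde g(2)\ge r$ must hold. Otherwise the maximum is over an empty set, and the comparison would rely on a convention $\max\emptyset=-\infty$ that I would rather avoid. So I need a concrete state with $(1-m_n)^2\ge g(3)$.

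Here I will exhibit a Beta posterior, using the explicit formula $g(\tau)=(b)_\tau/(a+b)_\tau$ from Proposition~\ref{prop:hill-kstep}, with large concentration. Take $\mathrm{Beta}(cm,c(1-m))$. As $c\to\infty$, $g(3)\to(1-m)^3<(1-m)^2$ whenever $m\in(0,1)$. So for $c$ large enough the inequality $(1-m)^2\ge g(3)$ holds while $\sigma_n^2>0$.

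A concrete check is the Table~\ref{tab:comparison} state, $n=5$, $S_5=2$, Jeffreys prior: $(1-m_5)^2\approx0.340\ge g(3)\approx0.258>(1-m_5)^3\approx0.199$. With $r=g(3)\approx0.258$ this gives $\tau^*_n(r)\ge 3>2=\widetilde\tau^*_n(r)$, which settles the corollary.

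The only genuine obstacle is this nonemptiness and strict-separation bookkeeping. The whole argument reduces to checking the chain $\tilde g(3)<g(3)\le\tilde g(2)$ at a single explicit state, and the numerical example above verifies it directly.
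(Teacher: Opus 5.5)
Your proof is correct and follows the paper's route: choose a horizon at which strict Jensen separates $\Ebb[(1-\theta_\infty)^{\tau_0}\mid\mathcal{F}_n]$ from $(1-m_n)^{\tau_0}$, place $r$ in that gap, and use monotonicity in $\tau$. Your extra bookkeeping fills in points the paper's one-line proof leaves implicit. This includes taking $\tau_0=3$ with $K\ge 3$, citing Proposition~\ref{prop:quantitative} rather than the $\tau=2$ value-gap result, and checking $(1-m_n)^2\ge r$ at the explicit Jeffreys state so that $\widetilde\tau^*_n(r)$ is a genuine maximum; in particular, if the paper's $\tau_0=2$ were used, its plug-in feasible set would be empty.
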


\begin{proof}
By Proposition~\ref{prop:value-gap}, for each $\tau_0\in\{2,\ldots,K-1\}$,
$\Ebb[(1-\theta_\infty)^{\tau_0}\mid\mathcal{F}_n]>(1-m_n)^{\tau_0}$
when $\sigma_n^2>0$.  Choose $r$ strictly between these two values;
then $\tau_0$ lies in the Bayes feasible set but not the plug-in feasible set.
\end{proof}

The distortion $V_n-\widetilde{V}_n=\sigma_n^2$ vanishes at rate $O(1/n)$
as the posterior concentrates (Proposition~\ref{prop:bvm-rate}), so
predictive completeness is necessary for optimal sequential decisions.

\section{Discussion}\label{sec:discussion}

For one-step prediction ($k=1$), first-moment coherence suffices:
$\Pbb(X_{n+1}=1\mid\mathcal{F}_n)=\theta_n$.
For $k\ge 2$, the martingale condition alone does not uniquely identify
$k$-step predictives (Theorem~\ref{thm:insuff}), and the plug-in
$(1-\theta_n)^k$ is inadmissible (Corollary~\ref{cor:inadmissible}).
Predictive completeness requires the full conditional law of
$\theta_\infty\mid\mathcal{F}_n$ (Theorem~\ref{thm:closure}).

Many practical martingale-posterior constructions do in fact specify a full
conditional law for $\theta_\infty\mid\mathcal{F}_n$, in which case the
closure condition of Theorem~\ref{thm:closure} is automatically satisfied.

\begin{example}[Diagnostic application]\label{ex:diagnostic}
Consider a martingale posterior defined by the update
$\theta_{n+1}=\theta_n+\gamma_n(X_{n+1}-\theta_n)$ for a learning rate
sequence $\gamma_n\to 0$.  This satisfies~\eqref{eq:mp-condition} by
construction.  If $\gamma_n=1/(n+c)$ for some $c>0$, then
$\theta_n=(S_n+c\theta_0)/(n+c)$, which is the posterior mean under a
$\BetaD(c\theta_0,c(1-\theta_0))$ prior.  The conditional law of
$\theta_\infty$ is then $\BetaD(c\theta_0+S_n,\,c(1-\theta_0)+n-S_n)$:
fully specified, and all predictives are determined.

For other choices of $\gamma_n$ (such as $\gamma_n=n^{-\alpha}$ with
$\alpha\ne 1$), the limiting conditional law is typically not a
Beta distribution and may not be analytically available.  The
martingale condition still holds, but without the conditional law, the
practitioner cannot compute $\Ebb[(1-\theta_\infty)^k\mid\mathcal{F}_n]$
for $k\ge 2$.  In such cases, bootstrap or resampling approximations to
the conditional law are needed to recover multi-step predictive
coherence.
\end{example}

In classical Bayesian inference, the likelihood
$\theta^{S_n}(1-\theta)^{n-S_n}$ updates $\Pi$ to
$\Pi(\cdot\mid\mathcal{F}_n)$, determining all moments.
Goldstein~(\citeyear{goldstein1975uniqueness}) made this mechanism
explicit: when the posterior mean is linear in the data, the likelihood
moments $(m_i)$ bootstrap from the first prior moment to all higher ones
via a recursive identity.  That recursion is the structural ingredient
the martingale condition omits.  Equivalently, the likelihood/prior
specification supplies the typical-set geometry (KL/entropy rates) for
learning along a single path~\citep{polson2025prediction}.
Absent this specification (whether via likelihood or alternative
construction), multi-step predictives are not uniquely determined.

The moment hierarchy rests on the classical theory of moment problems
(Hausdorff~(\citeyear{hausdorff1921summationsmethoden});
Feller~(\citeyear{feller1971introduction})).  Hausdorff
determinacy (that a measure on $[0,1]$ is uniquely determined
by its moment sequence) is the key structural tool, used both to
establish injectivity of the $k$-step run probabilities
(Theorem~\ref{thm:injective}) and to prove the closure theorem
(Theorem~\ref{thm:closure}).  Compact support is essential: the moment
problem is generically indeterminate for unbounded models
(Remark~\ref{rem:real-line}), where analogous results remain open.

Fong, Holmes, and Walker~(\citeyear{fong2023martingale}) introduced the
martingale posterior as a general-purpose alternative to
likelihood-based updating.  Their framework specifies a predictive
resampling scheme that, in implementations, often amounts to
specifying a full conditional law for $\theta_\infty\mid\mathcal{F}_n$.
The insufficiency results of Section~\ref{sec:insuff} apply to the
abstract martingale condition, not to specific implementations that go
beyond it.  The closure theorem (Theorem~\ref{thm:closure}) and the
optimal stopping distortion (Section~\ref{sec:stopping}) together
characterize the full consequences of first-moment versus full-law
specification.

Goldstein and Wooff~(\citeyear{goldstein1998wooff}) provide a linear-Bayes
treatment of adjusting exchangeable beliefs; their framework deliberately
stops short of full distributional specification.
Theorem~\ref{thm:closure} identifies the predictive consequences of
that choice.  The related contributions of Polson and
Zantedeschi~(\citeyear{polson2025prediction}) and Datta and
Polson~(\citeyear{datta2025polynomial}) to finite-moment prediction and
empirical Bayes are placed in context in the Introduction.
Bayesian predictive inference via c.i.d.\ sequences, without a fully
specified prior, is developed in Berti, Dreassi, Pratelli and
Rigo~(\citeyear{berti2021class}); Fortini and
Petrone~(\citeyear{fortini2017predictive}) give a predictive
characterization of exchangeable laws in a related setting.
Leisen, Pratelli and Rigo~(\citeyear{leisen2025weak}) characterize
c.i.d.\ sequences via the martingale property of $(\alpha_n(f))$
for bounded Borel~$f$.  The martingale posterior of Fong, Holmes and
Walker~(\citeyear{fong2023martingale}) constrains only
$\alpha_n(\mathrm{id})=\theta_n$; our results quantify the
predictive consequences of that gap.
In a complementary direction, Battiston and
Cappello~(\citeyear{battiston2025beyond}) introduce \emph{almost
conditional identically distributed} (a.c.i.d.) sequences, in which
predictive distributions are measure-valued almost supermartingales
parametrized by a sequence of deviation parameters; where the present
results identify what the strict martingale condition underdetermines,
their framework broadens the class of admissible predictive schemes
beyond the c.i.d.\ setting.

Several questions remain open.  First, whether finitely many moment
constraints beyond the first yield ``approximate'' coherence in a
precise quantitative sense: the second-order bound
(Proposition~\ref{prop:quantitative}) controls $k$-step predictives up
to third-order remainders, but minimax rates over posteriors with given
$(\mu_1,\mu_2)$ are unknown.  Second, for exponential families with
non-compact parameter space (Gaussian mean, Poisson rate), the moment
problem is generically indeterminate (Remark~\ref{rem:real-line}), and
what replaces Hausdorff determinacy is open.  Third, whether
approximate methods (variational, ABC) can be disciplined by moment
constraints to achieve practical completeness.

Martingale coherence~\eqref{eq:mp-condition} constrains only the first
conditional moment.  Predictive completeness requires the full
conditional law, equivalently all moments on $[0,1]$
(Theorem~\ref{thm:closure}).  The Bayes posterior supplies this; the
martingale condition alone does not.

\subsection*{Acknowledgements}

We thank Luca Pratelli and Pietro Rigo for comments that helped
reshape several sections of this paper.

\begin{appendix}

\section{Hausdorff moment theorem}\label{app:hausdorff}

\begin{theorem}[Hausdorff]\label{thm:hausdorff}
A sequence $(\mu_k)_{k\ge 0}$ with $\mu_0=1$ is the moment sequence of a
probability measure on $[0,1]$ if and only if it is completely monotone:
$(-1)^m\Delta^m\mu_k\ge 0$ for all $k,m\ge 0$, where $\Delta$ is the
forward difference operator.  The moment sequence uniquely determines the
measure.
\end{theorem}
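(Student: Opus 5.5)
The plan is to prove the two directions of the characterization separately, then uniqueness.

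\emph{Necessity.} Let $\mu$ be a probability measure on $[0,1]$ with moments $\mu_k=\int\theta^k\dx\mu$. The key step is the identity
\[
(-1)^m\Delta^m\mu_k=\int_0^1\theta^k(1-\theta)^m\dx\mu(\theta)\;\ge 0 .
\]
I will prove it by induction on $m$. The induction uses $-\Delta\mu_k=\mu_k-\mu_{k+1}=\int\theta^k(1-\theta)\dx\mu$; this is the same binomial expansion behind Proposition~\ref{prop:mobius} and Corollary~\ref{cor:moment-hierarchy}.

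\emph{Sufficiency.} This is the main obstacle. Given a completely monotone sequence with $\mu_0=1$, set
\[
p_{n,k}=\binom{n}{k}(-1)^{n-k}\Delta^{n-k}\mu_k\ge 0 .
\]
Expanding $\mu_0=1$ by the telescoping identity $\mu_k=\mu_{k+1}+(-\Delta\mu_k)$, iterated $n$ times, shows that $\sum_{k=0}^n p_{n,k}=1$. Now define the discrete probability measures $\mu^{(n)}=\sum_k p_{n,k}\delta_{k/n}$ on $[0,1]$. These are the Bernstein-polynomial approximants, and they are exactly what the de~Finetti moment formula in Theorem~\ref{thm:definetti} suggests.

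Next I will compute the falling-factorial moments of $\mu^{(n)}$. The goal is the identity
\[
\sum_k \frac{k^{(j)}}{n^{(j)}}\,p_{n,k}=\mu_j ,
\]
which is a finite combinatorial computation: reindex and apply the binomial expansion of $\Delta^{n-k}$. It then follows that $\int\theta^j\dx\mu^{(n)}\to\mu_j$ as $n\to\infty$.

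By compactness of $[0,1]$ (Prokhorov, or Helly selection), some subsequence of $\mu^{(n)}$ converges weakly to a probability measure $\mu$. Since $\theta^j$ is continuous and bounded on $[0,1]$, $\mu$ has moments $\mu_j$. I expect the bookkeeping in the falling-factorial identity to be the delicate part.

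\emph{Uniqueness.} Two probability measures on $[0,1]$ with the same moments agree on all polynomials. By Weierstrass density (Remark~\ref{rem:poly-density}) they then agree on $C[0,1]$, and by the Riesz representation theorem they coincide. This is routine.
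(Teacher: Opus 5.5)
Your proposal is correct and is essentially the classical Bernstein-approximation argument of Feller, Chapter~VII: the weights $p_{n,k}=\binom{n}{k}(-1)^{n-k}\Delta^{n-k}\mu_k$, the falling-factorial moment identity, Helly selection, and Weierstrass density for uniqueness. The paper gives no proof of its own and cites exactly this source. The one step you leave implicit, that $(k/n)^j-k^{(j)}/n^{(j)}=O(j^2/n)$ uniformly in $0\le k\le n$, is routine; and because you invoke Theorem~\ref{thm:definetti} only as motivation, the argument is not circular.
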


See Hausdorff~(\citeyear{hausdorff1921summationsmethoden}) and
Feller~(\citeyear{feller1971introduction}), Chapter~VII.

\section{General functional non-identification}\label{app:general-nonlinear}

\begin{theorem}[General nonlinearity]\label{thm:general-nonlinear}
Let $f:[0,1]\to\mathbb{R}$ be any non-affine continuous function.  For
any $m\in(0,1)$, there exist probability measures $\nu_1,\nu_2$ on
$[0,1]$ with $\int\theta\dx\nu_1=\int\theta\dx\nu_2=m$ but
$\int f\dx\nu_1\ne\int f\dx\nu_2$.
Therefore $\Ebb[f(\theta)\mid\mathcal{F}_n]$ is not determined by
$\Ebb[\theta\mid\mathcal{F}_n]$.
\end{theorem}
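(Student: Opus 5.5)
The plan is to compare the point mass $\nu_1=\delta_m$ with a two-point law $\nu_2$ of mean $m$, and to obtain the required two-point law from a chord argument, since non-affinity of $f$ is exactly the failure of $f$ to coincide with its chords. Because both measures have mean $m$, it is enough to find one two-point law $\nu_2=\lambda\delta_a+(1-\lambda)\delta_b$ with $a<m<b$ and $\lambda=(b-m)/(b-a)$ such that
\[
\lambda f(a)+(1-\lambda)f(b)\ne f(m).
\]
In words, the chord of $f$ joining $(a,f(a))$ and $(b,f(b))$ must miss $f(m)$ at $m$. If such a pair $(a,b)$ exists, then $\nu_1=\delta_m$ and this $\nu_2$ prove the theorem at once. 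The statement about $\Ebb[f(\theta)\mid\mathcal{F}_n]$ then follows as in Theorem~\ref{thm:insuff}: take the two measures as posteriors with common mean $m_n$.

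The main step is to rule out the alternative, so I argue by contradiction. Suppose every chord through the point $m$ agrees with $f$ there, that is,
\[
(b-a)f(m)=(b-m)f(a)+(m-a)f(b)\qquad\text{for all } a\in[0,m),\ b\in(m,1].
\]
First fix $a=0$. This gives $f(b)=f(0)+(b-0)\,\frac{f(m)-f(0)}{m}$ for every $b\in(m,1]$, so $f$ is affine on $[m,1]$, with the line $L$ passing through $(0,f(0))$ and $(m,f(m))$. Next fix $b=1$. This gives that $f(a)$ equals the value at $a$ of the line through $(m,f(m))$ and $(1,f(1))$, for every $a\in[0,m)$. Since $f(1)$ lies on $L$, that second line is $L$ itself. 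Hence $f=L$ on all of $[0,1]$, with $m$ handled trivially, and this contradicts the assumption that $f$ is non-affine.

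A more symmetric version of the same argument is also available. The identity says that the slope $(f(b)-f(m))/(b-m)$ is constant in $b$ and equals $(f(m)-f(a))/(m-a)$ for every $a$. So both one-sided difference quotients from $m$ share a single constant $c$, which forces $f(x)=f(m)+c(x-m)$ on $[0,1]$.

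The only delicate point is allowing the endpoints $a=0$ and $b=1$, which the support of a measure on $[0,1]$ permits. If one insists on interior points $0<a<m<b<1$, the same argument goes through with $a\downarrow0$ and $b\uparrow1$, using continuity of $f$ to extend affinity to the endpoints. I expect no real obstacle: the argument is elementary, and continuity is needed only for that endpoint extension.
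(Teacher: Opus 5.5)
Your proof is correct, and it takes a different and more rigorous route than the paper's. The paper first chooses an interval $[a,b]$ on which $f$ is not affine and takes $m\in(a,b)$. It then asserts $\lambda f(a)+(1-\lambda)f(b)\ne f(m)$ ``by strict Jensen,'' and dispatches $m\notin(a,b)$ with an unspecified three-point mixture. The Jensen step is valid only for strictly convex or strictly concave $f$. Non-affinity on $[a,b]$ does not stop the particular chord over $[a,b]$ from passing through $(m,f(m))$: for $f(\theta)=\sin(2\pi\theta)$, $[a,b]=[0,1]$ and $m=\tfrac12$, the chord and $f$ agree at $m$.

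You instead fix $m$ first and range over all chords straddling it. If every such chord hit $(m,f(m))$, the identity $(b-m)(f(m)-f(a))=(m-a)(f(b)-f(m))$ would force the left and right difference quotients at $m$ to be a single constant, so $f$ would be globally affine. This handles every $m\in(0,1)$ uniformly, needs no convexity, and, once atoms at $0$ and $1$ are allowed, needs no continuity either. A repaired version of the paper's localized construction would give a separating $\nu_2$ supported near a region of non-affinity. The theorem only asks for existence, though, so your global contradiction is the cleaner argument.
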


\begin{proof}
Since $f$ is non-affine and continuous, there exist $a<b$ in $[0,1]$
such that $f$ is not affine on $[a,b]$.  For $m\in(a,b)$, set
$\nu_1=\delta_m$ and $\nu_2=\lambda\delta_a+(1-\lambda)\delta_b$ with
$\lambda=(b-m)/(b-a)$.  Both have mean $m$.  By strict Jensen,
$\int f\dx\nu_2\ne f(m)=\int f\dx\nu_1$.  For $m\notin(a,b)$, a
three-point mixture with positive mass on $[a,b]$ achieves the same
conclusion by continuity and non-affinity of $f$.
\end{proof}

The case $f(\theta)=(1-\theta)^k$ recovers Lemma~\ref{lem:non-id}.
The case $f(\theta)=\ind[\theta\le t]$ shows that even the posterior CDF
at a single point is not determined by the mean; the case
$f(\theta)=-\theta\log\theta-(1-\theta)\log(1-\theta)$ (Bernoulli entropy,
strictly concave) gives the same conclusion for expected entropy.

\end{appendix}


\end{document}